\newcommand{\dsp}{\displaystyle}
\newcommand{\bd}{\begin{displaymath}}
\newcommand{\be}{\begin{equation}}
\newcommand{\beq}{\begin{eqnarray}}
\newcommand{\ba}{\begin{array}}
\newcommand{\ed}{\end{displaymath}}
\newcommand{\ee}{\end{equation}}
\newcommand{\eeq}{\end{eqnarray}}
\newcommand{\ea}{\end{array}}
\newcommand{\espace}{\mbox{ }}
\newcommand{\norm}[1]{\left|\left|#1\right|\right|}
\newcommand{\eps}{\varepsilon}
\newcommand{\sgn}{\mbox{\rm sgn}\,}
\newcommand{\abs}[1]{\left|#1\right|}
\newcommand{\Prob}{{\rm I\hspace{-0.8mm}P}}
\newcommand{\Exp}{{\rm I\hspace{-0.8mm}E}}
\newcommand{\N}{{\mathbb N}}
\newcommand{\Z}{{\mathbb Z}}
\newcommand{\R}{{\mathbb R}}
\newcommand{\dt}{\partial_t}
\newcommand{\dx}{\partial_x}
\newcommand{\eqref}[1]{(\ref{#1})}
\newtheorem{theorem}{Theorem}[section]
\newtheorem{proposition}{Proposition}[section]
\newtheorem{lemma}{Lemma}[section]
\newtheorem{corollary}{Corollary}[section]
\newtheorem{remark}{Remark}[section]
\newenvironment{proof}[2]{\espace\\{\em Proof of #1 \ref{#2}.}}{\hfill\mbox{$\square$}}
\begin{document}
\title{Strong hydrodynamic limit for  attractive  particle
systems on $\Z$
}
\author{C. Bahadoran$^{a,b}$, H. Guiol$^c$, K. Ravishankar$^d$, E. Saada$^e$}
\date{}
\maketitle
$$ \ba{l}
^a\,\mbox{\small Laboratoire de Math\'ematiques, Univ. Clermont-Ferrand 2, 63177 Aubi\`ere, France} \\
^b\, \mbox{\small Corresponding author. e-mail:
bahadora@math.univ-bpclermont.fr}\\
^c\, \mbox{\small TIMC - TIMB, INP Grenoble, Facult\'{e} de
M\'{e}decine, 38706 La Tronche Cedex, France
} \\
^d\, \mbox{\small Dep. of Mathematics, SUNY,
College at New Paltz, NY, 12561, USA} \\
^e\, \mbox{\small CNRS, UMR 6085, Univ. Rouen, 76801
Saint Etienne du Rouvray Cedex, France}\\
\ea
$$
\footnotetext{{\em AMS 2000 subject classification.} Primary
60K35; Secondary 82C22.\\
{\em Keywords and phrases.} Strong (a.s.) hydrodynamics, attractive
particle system,  graphical construction,  non-explicit invariant
measures, non-convex or non-concave flux, entropy solution, Glimm
scheme.}
\begin{abstract}
\noindent  We prove almost sure Euler hydrodynamics for a large
class of attractive particle systems on $\Z$ starting from an
arbitrary initial profile. We generalize earlier works by
Sepp\"al\"ainen (1999) and Andjel et al. (2004). Our constructive
approach requires new ideas since the subadditive ergodic theorem
(central to previous works) is no longer effective in our setting.

\end{abstract}
% main text%
%
\section{Introduction}
%\emph{Why do we care about strong hydrodynamics ?}
%\vskip 0.2cm
%\noindent
Hydrodynamic limit (\cite{dp, spohn, kl, var0}) is a law of large
numbers for the time evolution (usually described by a limiting PDE,
 called the hydrodynamic equation) of empirical density
fields in interacting particle systems (\emph{IPS}). In most known
results, only a weak law of large numbers is established. In this
description one need not have an explicit construction of the
dynamics: the limit is shown in probability with respect to the law
of the process, which is characterized in an abstract way by its
Markov generator and Hille-Yosida's theorem  (\cite{lig1}).
Nevertheless, when simulating particle systems, one naturally uses a
pathwise construction of the process on a
 Poisson space-time random graph (the so-called
``graphical construction''). In this description the dynamics is
deterministically driven by a random space-time measure which
tells when and where the configuration has a chance of being
modified. It is of special interest to show that the hydrodynamic
limit holds for almost every realization of the space-time
measure, as this means  a single simulation is enough to
approximate solutions of the limiting PDE. In a sense this is
comparable to the interest of proving strong (rather than weak)
consistency for statistical estimators, by which one knows
that a single path observation is enough to estimate parameters.\\ \\
%
%\emph{What kind of difficulties are associated with a.s.
%hydrodynamics for asymmetric IPS ?} \vskip 0.2cm
%\noindent
Most usual IPS can be divided into two groups, diffusive and
hyperbolic. In the first group,  which contains for instance the
symmetric or mean-zero asymmetric exclusion process, the
macroscopic$\to$microscopic space-time scaling is
$(x,t)\mapsto(Nx,N^2t)$ with $N\to\infty$, and the limiting PDE is a
diffusive equation. In the second group, which contains for instance
the nonzero mean asymmetric exclusion process, the scaling is
$(x,t)\mapsto(Nx,Nt)$, and the limiting PDE is of Euler type. In
both groups this PDE  often  exhibits nonlinearity, either via the
diffusion coefficient in the first group, or via the flux function
in the second one. This raises special difficulties in the
hyperbolic case, due to shocks and non-uniqueness for the solution
of the PDE, in which case the natural problem is to establish
convergence to the so-called ``entropy solution" (\cite{serre}). In
the diffusive class it is not so necessary to specifically establish
strong laws of large numbers, because one has a fairly robust method
(\cite{kov})  to establish large deviations from the hydrodynamic
limit. Large deviation upper bound and Borel Cantelli's lemma imply
a strong law of large numbers as long as the large deviation
functional is lower-semicontinuous and has a single zero. The
situation is quite different in the hyperbolic class, where large
deviation principles are much more difficult to obtain. So far only
the remarkable result of Jensen and Varadhan (\cite{jen} and
\cite{var}) is available, and it applies only to the one-dimensional
   totally asymmetric simple exclusion process. Besides, the
fact that the resulting large deviation functional has a single zero
is not at all obvious: it follows only from recent and refined work
on conservation laws (\cite{dow}).  An even more difficult situation
occurs for particle systems with nonconvex and nonconcave flux
function, for which the Jensen-Varadhan large deviation functional
does not have a unique zero, due to the existence of non-entropic
solutions satisfying a single entropy inequality
(\cite{hlf}).  In this case a more complicated rate functional can actually be conjectured from \cite{bbmn, mar}. \\ \\
%
%\emph{Previous works about strong hydrodynamics.} \vskip 0.2cm
%\noindent
 The derivation of hyperbolic equations as hydrodynamic limits
began with the seminal paper \cite{rost}, which established a strong
law of large numbers for the totally asymmetric simple exclusion
process on $\Z$, starting with $1$'s to the left of the origin and
$0$'s to the right. This result was extended by \cite{bf} and
\cite{afs} to nonzero mean exclusion process starting from product
Bernoulli distributions with arbitrary densities $\lambda$ to the
left and $\rho$ to the right (the so-called  ``Riemann'' initial
condition). The Bernoulli distribution at time $0$ is   related to
the fact that uniform Bernoulli measures are invariant for the
process. For the one-dimensional totally asymmetric $K$-exclusion
process, which does not have explicit invariant measures,  a strong
 hydrodynamic limit was established in \cite{timo}, starting
from arbitrary initial profiles, by means of the so-called
``variational coupling''.
These are the only strong laws available
so far. A common feature of these works is the use of subadditive
ergodic theorem to exhibit some a.s. limit, which is then identified by additional arguments. \\ \\
%
%\emph{Some facts on weak laws.} \vskip 0.2cm
%\noindent
 On the other hand, many {\em weak} laws of large numbers were
established for attractive particle systems. A first series of
results treated  systems with product invariant measures and product
initial distributions. In \cite{ak}, for a particular zero-range
model, a weak law was deduced from conservation of local equilibrium
under Riemann initial condition. It was then extended in \cite{av}
to the misanthrope's process of \cite{coc} under an additional
convexity assumption on the flux function. These were substantially
generalized (using Kru\v{z}kov's entropy inequalities, see
\cite{kru}) in \cite{fraydoun} to multidimensional attractive
systems with arbitrary Cauchy data, without any convexity
requirement on the flux.
For systems with unknown invariant measures, the result of
\cite{timo} was later extended to other models, though only through
weak laws.
In \cite{rez}, using semigroup point of view, hydrodynamic limit was
established for the one-dimensional nearest-neighbor $K$-exclusion
process.  In \cite{bgrs2} we studied fairly general attractive
systems, employing a constructive approach we had initiated in
\cite{bgrs1}. This method is based on an approximation scheme (to go
from Riemann to Cauchy initial data) and control of some distance
between the particle system and the entropy solution to the
hydrodynamic equation. \\ \\
%
%Except for the last one, results of this
%paragraph were obtained by identification
%of a limit in law, which is a priori not suited to a.s. convergence.\\ \\
%
%In \cite{bgrs2} we proved hydrodynamics for finite-range attractive
%systems, without any assumptions on the flux, invariant measures or
%microscopic structure of initial distributions. Our constructive
%approach, introduced in \cite{bgrs1}, used an approximation scheme
%to go from Riemann to Cauchy initial
%data, and a control of the distance between the particle system and the entropy solution. \\ \\
%
% Unfortunately all these methods are not adaptable for our
%purpose on strong hydrodynamics.
%\emph{What is new?} \vskip 0.2cm
%\noindent
%
In this paper we prove a strong law of large numbers, starting from
arbitrary initial profiles, for finite-range attractive particle
systems on $\Z$ with bounded occupation number. We need no
assumption on the flux, invariant measures or microscopic structure
of initial distributions. This includes finite-range exclusion and
$K$-exclusion processes, more general misanthrope-type models and
$k$-step exclusion processes.
We proceed in the constructive spirit of \cite{bgrs1, bgrs2}, which
is adapted to a.s. convergence, though this requires a novel
approach for the Riemann part, and  new error analysis for the
Cauchy part.
%
%A graphical representation of the dynamics is required to superpose
%suitable couplings.
%
 The implementation of an  approximation scheme involves
Riemann hydrodynamics from \emph{any space-time shifted initial
position}, which cannot be obtained through subadditive ergodic
theorem, and thus prevents us from using the a.s. result of
\cite{afs}. This can be explained as follows (see Appendix
\ref{remarks_sub} for more details). Assume that on a probability
space we have a shift operator $\theta$ that preserves the
probability measure, and a sequence $(X_n)_{n\in\N}$ of real-valued
random variables converging a.s. to a constant $x$. Then we can
conclude that the sequence $(Y_n:=X_n\circ\theta^n)_n$ converges
{\em in probability} to $x$, but not necessarily almost surely.
Indeed, for fixed $n$, $Y_n$ has the same distribution as $X_n$, but
the sequence $(Y_n)_n$ need not have the same distribution as
$(X_n)_n$. So the derivation of a.s. convergence for $Y_n$ is a case
by case problem depending on how it was obtained for $X_n$. In
particular, if convergence of $X_n$ was established from the
subadditivity property
$$
X_{n+m}\leq X_n+X_m\circ\theta^n
$$
this property is no longer satisfied by $Y_n$. In contrast, if
convergence of $X_n$ was established by large deviation estimates
for $X_n$,  these estimates carry over to $Y_n$, thus also implying
a.s. convergence of $(Y_n)_n$.
In our context the random variable $X_n$ is a current in the Riemann
setting. In order to handle the Cauchy problem we have to establish
a.s. convergence for a shifted version $Y_n$ of this current. We
derive asymptotics for $Y_n$  by means of large
deviation arguments.\\ \\
%
%\vskip 0.2cm
%
%\noindent
The paper is organized as follows. In Section \ref{sec_results}, we
define the model, give its graphical representation, its
monotonicity properties, and state strong hydrodynamics. In Section
\ref{riemann}, we derive almost sure Riemann hydrodynamics. In
Section \ref{general} we prove the result starting from any initial
profile.
%In order for this paper
%to be self contained we quote the needed results when
%necessary. %From time
%to time some technical details are referred to \cite{afs,av,bgrs1}
%and \cite{bgrs2} nevertheless needed results are always restated.
%One aim of this paper is to be as self contained as possible so
%that it should be possible to read it without knowing the details
%of these previous works.

%
%
\section{Notation and results}\label{sec_results}
Throughout this paper $\N=\{1,2,...\}$ will denote the set of
natural numbers, $\Z^+=\{0,1,2,...\}$ the set of non-negative
integers, and $\R^{+*}=\R^+-\{0\}$ the set of positive real numbers.
We consider particle systems on $\Z$ with at most $K$ particles per
site, $K\in\N$. Thus the state space of the process is ${\mathbf
X}=\{0,1,\cdots,K\}^{\Z}$, which we endow with the product topology,
that makes ${\mathbf X}$ a compact metrisable space.   A function
defined on ${\mathbf X}$ is called {\em local} if it depends on the
variable $\eta\in{\mathbf X}$ only through $(\eta(x),x\in\Lambda)$
for some finite subset $\Lambda$ of $\Z$. We denote by $\tau_x$,
either the spatial translation operator on the real line for
$x\in\R$, defined by $\tau_x y=x+y$, or its restriction to $\Z$ for
$x\in\Z$.  By extension, we set $\tau_x f=f\circ\tau_x$ if $f$ is a
function defined on $\R$; $\tau_x\eta=\eta\circ\tau_x$, for
$x\in\Z$, if $\eta\in\bf X$ is a particle configuration (that is a
particular function on $\Z$); $\tau_x\mu=\mu\circ\tau_x^{-1}$ if
$\mu$ is a measure on $\R$ or on $\bf X$.   We let ${\mathcal
M}^+(\R)$ denote  the set of positive measures on $\R$ equipped with
the metrizable topology of vague convergence, defined by convergence
on continuous test functions with compact support. The set of
probability measures on $\mathbf{X}$ is denoted by ${\mathcal
P}(\mathbf{X})$.  If $\eta$ is an ${\mathbf X}$-valued random
variable and $\nu\in{\mathcal P}(\mathbf{X})$, we write
$\eta\sim\nu$ to specify that $\eta$ has distribution $\nu$. The
notation $\nu(f)$, where $f$ is a real-valued function and
 $\nu\in{\mathcal P}(\mathbf{X})$, will be an
alternative for $\int_{\mathbf X}fd\nu$.
We say a sequence $(\nu_n,n\in\N)$ of probability measures on
${\mathbf X}$ converges weakly to some $\nu\in{\mathcal
P}(\mathbf{X})$, if and only if $\nu_n(f)\to\nu(f)$ as $n\to\infty$
for every continuous function  $f$ on ${\mathbf X}$.  The topology
of weak convergence is metrizable and makes ${\mathcal P}({\bf X})$
compact.
\subsection{The system and its graphical construction}
\label{graphical_construction}
We consider Feller processes on $\bf X$ whose transitions consist of
particle jumps encoded by the Markov generator
\be \label{generator}
Lf(\eta)=\sum_{x,y\in{\Z}}p(y-x)b(\eta(x),\eta(y)) \left[
f\left(\eta^{x,y} \right)-f(\eta) \right] \ee
for any local function $f$, where $\eta^{x,y}$ denotes the new state
after a particle has jumped from $x$ to $y$ (that is
$\eta^{x,y}(x)=\eta(x)-1,\,\eta^{x,y}(y)=\eta(y)+1,\,
\eta^{x,y}(z)=\eta(z)$ otherwise), $p$ is the particles' jump
kernel, that is  a probability distribution on $\Z$, and $b\ :\
\Z^+\times\Z^+\to \R^+$ is the jump rate. We assume that $p$ and $b$
satisfy :
\begin{tabbing}
{\em (A1)} \= The semigroup of $\Z$ generated by the support of
$p$ is $\Z$ itself\\
\> (irreducibility);\\
{\em (A2)} \> $p$ has a finite first moment,
that is $\sum_{z\in\Z}\abs{z}p(z)<+\infty$;\\
{\em (A3)} \> $b(0,.)=0,\,b(.,K)=0$ (no more than $K$
particles per site), and\\
\> $b(1,K-1)>0$;\\
{\em (A4)} \> $b$ is nondecreasing (nonincreasing) in its first
(second) argument.
 \end{tabbing}
We denote by $(S(t),t\in\R^+)$ the semigroup generated by $L$.
Without additional algebraic relations satisfied by $b$ (see
\cite{coc}), the system in general has no explicit invariant
measures. This is the case even for {\em $K$-exclusion process}
 with $K\geq 2$ (see \cite{timo}), where $b(n,m)={\bf
1}_{\{n>0,m<K\}}$.
\begin{remark}
For the sake of simplicity, we restrict our attention to processes
of ``misanthrope type'',  for which particles leave more likely
crowded sites for less occupied ones  (cf. \cite{coc}). But our
method can be applied to a much larger class of attractive models
that can be constructed through a graphical representation, such as
$k$-step exclusion (see \cite{guiol} and \cite{bgrs1}).
\end{remark}
We now describe the graphical construction of the  system given by
\eqref{generator}, which uses the Harris representation
(\cite{h,har}, \cite[p. 172]{lig1}, \cite[p. 119]{bd}, \cite[p.
215]{lig2}); see for instance \cite{afs} for details and
justifications. This enables us to define the evolution from
different initial configurations simultaneously on the same
probability space.
 We consider the probability space $(\Omega,{\mathcal
F},\Prob)$ of measures $\omega$ on $\R^+\times\Z^2\times[0,1]$ of
the form
\[
\omega(dt,dx,dz,du)=\sum_{m\in\N}\delta_{(t_m,x_m,z_m,u_m)}
\]
where  $\delta_{(\cdot)}$ denotes Dirac measure, and
$(t_m,x_m,z_m,u_m)_{m\ge 0}$ are pairwise distinct and form a
locally finite set. The $\sigma$-field $\mathcal F$ is generated
by the mappings $\omega\mapsto\omega(S)$ for Borel sets $S$. The
probability measure $\Prob$ on $\Omega$ is the one that makes
$\omega$ a Poisson process with intensity
\[%be \label{intensity_poisson}
m(dt,dx,dz,du)=
||b||_\infty\lambda_{\R^+}(dt)\times\lambda_{\Z}(dx)\times
p(dz)\times\lambda_{[0,1]}(du) \]%ee
where $\lambda$ denotes either the Lebesgue or the counting
measure.
We denote by $\Exp$ the corresponding expectation. Thanks to
assumption {\em (A2)}, for $\Prob$-a.e. $\omega$, there exists a
unique mapping
\be \label{unique_mapping} (\eta_0,t)\in{\bf
X}\times\R^+\mapsto\eta_t=\eta_t(\eta_0,\omega)\in{\bf X} \ee
satisfying: (a) $t\mapsto\eta_t(\eta_0,\omega)$ is
right-continuous; (b) $\eta_0(\eta_0,\omega)=\eta_0$; (c) for
$t\in\R^+$, $(x,z)\in\Z^2$, $\eta_t=\eta_{t^-}^{x,x+z}$ if
\begin{equation}
\label{update} \exists u\in[0,1]:\,\omega\{(t,x,z,u)\}=1\mbox{ and
} u\leq\displaystyle{\frac{b(\eta_{t^-}(x),\eta_{t^-}(x+z))}
{||b||_\infty}}
\end{equation}
and (d) for all $s,t\in\R^{+*}$ and $x\in\Z$,
\begin{equation}
\label{noupdate} \omega\{[s,t]\times Z_x\times (0,1)\}=0 \Rightarrow
\forall v\in[s,t],\eta_v(x)=\eta_s(x)
\end{equation}
where
$$
Z_x:=\left\{(y,z)\in\Z^2: y=x \mbox{ or } y+z=x \right\}
$$
In short, \eqref{update} tells how the state of the system can be
modified by an ``$\omega$-event'', and \eqref{noupdate} says that
the system cannot be modified outside $\omega$-events. This defines
a Feller process with generator \eqref{generator}:
 that is for any $t\in\R^+$ and $f\in C(\mathbf{X})$
(the set of continuous functions on $\mathbf{X}$), $S(t)f\in
C(\mathbf{X})$ where
$S(t)f(\eta_0)=\Exp[f(\eta_t(\eta_0,\omega))]$.\\ \\
An equivalent formulation  %of \eqref{update}
is the following. For each $(x,z)\in\Z^2$, let $\{T_n^{x,z},n\geq
1\}$ be the arrival times of mutually independent rate
$||b||_\infty p(z)$ Poisson processes, let $\{U_n^{x,z},n\geq 1\}$
be mutually independent (and independent of the Poisson processes)
random variables, uniform on $[0,1]$. At time $t=T_n^{x,z}$, the
configuration $\eta_{t^-}$ becomes $\eta_{t^-}^{x,x+z}$ if
$\displaystyle{U_n^{x,z}\le
\frac{b(\eta_{t^-}(x),\eta_{t^-}(x+z))}{||b||_\infty}}$, and stays
unchanged otherwise.
\begin{remark} For the $K$-exclusion process,
 $b$ takes its values in $\{0,1\}$,
the probability space can be reduced to measures
$\omega(dt,dx,dz)$ on $\R^+\times\Z^2$, and \eqref{update} to
$\eta_t=\eta_{t^-}^{x,x+z}$ if  $\eta_{t^-}(x)>0$ and
$\eta_{t^-}(x+z)<K$. One recovers exactly the graphical
construction presented in e.g. \cite{timobook} or \cite{afs}.
\end{remark}
One may further introduce an ``initial'' probability space
$(\Omega_0,{\mathcal F}_0,\Prob_0)$, large enough to construct
random initial configurations $\eta_0=\eta_0(\omega_0)$ for
$\omega_0\in\Omega_0$.
The general process with random initial configurations is
constructed on the enlarged space
$(\widetilde{\Omega}=\Omega_0\times\Omega,\widetilde{\mathcal
F}=\sigma({\mathcal F}_0\times{\mathcal
F}),\widetilde{\Prob}=\Prob_0\otimes\Prob)$ by setting
\[%be\label{conf-coupl}
\eta_t(\widetilde{\omega})=\eta_t(\eta_0(\omega_0),\omega)\]%ee
for $\widetilde{\omega}=(\omega_0,\omega)\in\widetilde{\Omega}$.
If $\eta_0$ has distribution $\mu_0$, then the process thus
constructed is Feller with generator \eqref{generator} and initial
distribution $\mu_0$.  By a {\em coupling} of two systems, we mean
a process $(\eta_t,\zeta_t)_{t\geq 0}$ defined on
$\widetilde\Omega$, where each component evolves according to
 \eqref{unique_mapping}--\eqref{noupdate}, and the
random variables $\eta_0$ and $\zeta_0$ are defined
simultaneously on $\Omega_0$.\\ \\
We define on $\Omega$ the {\sl space-time shift} $\theta_{x_0,t_0}$:
for any $\omega\in\Omega$, for any $(t,x,z,u)$

\[%be
(t,x,z,u)\in\theta_{x_0,t_0}\omega\mbox{ if and only if
}(t_0+t,x_0+x,z,u)\in\omega\]%ee
 where $(t,x,z,u)\in\omega$ means $\omega\{(t,x,z,u)\}=1$. By its very definition, the mapping introduced in
\eqref{unique_mapping} enjoys the following properties, for all
 $s,t\geq 0$, $x\in\Z$ and $(\eta,\omega)\in{\bf
X}\times{\Omega}$:
\be \label{mapping_markov}
\eta_s(\eta_t(\eta,\omega),\theta_{0,t}\omega)=\eta_{t+s}(\eta,\omega)
\ee
which  implies Markov property, and
\be \label{mapping_shift}
\tau_x\eta_t(\eta,\omega)=\eta_t(\tau_x\eta,\theta_{x,0}\omega)
\ee
which implies that $S(t)$ and $\tau_x$ commute.
\subsection{Main result}
 We give here a precise statement of strong
hydrodynamics. Let $N\in\N$ be the scaling parameter for the
hydrodynamic limit, that is the inverse of the macroscopic distance
between two consecutive sites. The empirical measure of a
configuration $\eta$ viewed on scale $N$ is given by
\[%be
\alpha^N(\eta)(dx)=N^{-1}\sum_{y\in\Z}\eta(y)\delta_{y/N}(dx)
\in{\mathcal M}^+(\R) \]%ee
We now state our main result.
\begin{theorem}\label{th:hydro}
Assume $p(.)$ is finite range, that is there exists $M>0$ such
that $p(x)=0$ for all $|x|>M$. Let $(\eta^N_0,\,N\in\N)$ be a
sequence of $\bf X$-valued random variables on $\Omega_0$. Assume
there exists a measurable $[0,K]$-valued profile $u_0(.)$ on $\R$
such that
\be\label{initial_profile_vague}
\lim_{N\to\infty}\alpha^N(\eta^N_0)(dx)=
u_0(.)dx,\quad\Prob_0\mbox{-a.s.}\ee
that is,
\be \label{initial_profile}\lim_{N\to\infty}
\int_\R\psi(x)\alpha^N(\eta^N_0)(dx) =\int
\psi(x)u_0(x)dx,\quad\Prob_0\mbox{-a.s.} \ee
for every continuous function $\psi$ on $\R$ with compact support.
 Let $(x,t)\mapsto u(x,t)$ denote the unique entropy
solution to the scalar conservation law
\be \label{hydrodynamics} \dt u+\dx[G(u)]=0 \ee
with initial condition $u_0$, where $G$ is a Lipschitz-continuous
flux function  (defined in \eqref{flux} below) determined by $p(.)$
and $b(.,.)$.
Then,  with $\widetilde\Prob$-probability one, the convergence
\be \label{later_profile}
\lim_{N\to\infty}\alpha^N(\eta_{Nt}(\eta^N_0(\omega_0),\omega))(dx)=u(.,t)dx
%,\quad
%\widetilde{\Prob}\mbox{-a.s.}
\ee
holds uniformly on all bounded time intervals. That is, for every
continuous function $\psi$ on $\R$ with compact support, the
convergence
\be \label{later_profile_test}\lim_{N\to\infty}
\int_\R\psi(x)\alpha^N(\eta^N_{Nt})(dx) =\int \psi(x)u(x,t)dx\ee
 holds uniformly on all bounded time intervals.
\end{theorem}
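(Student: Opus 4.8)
The plan is to prove strong hydrodynamics starting from an arbitrary initial profile via a two-stage reduction.

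First, I need to understand the structure. We have an attractive particle system on Z with finite-range jumps, bounded occupation (at most K per site), constructed via the graphical/Harris representation on a Poisson space-time measure. The monotonicity (attractiveness, assumption A4) gives us a coupling where the dynamics preserves the partial order on configurations. The theorem asks for almost-sure (not just in-probability) convergence of the empirical measure to the entropy solution u(x,t) of a scalar conservation law, uniformly on bounded time intervals, starting from an arbitrary measurable profile u_0.

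The natural strategy, which the paper signals in its introduction, is a constructive/approximation approach:

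Step 1 (Riemann hydrodynamics). First establish the result for Riemann initial data — a step profile taking value λ to the left of the origin and ρ to the right. Section 3 is devoted to this. The key point, flagged in the introduction, is that I cannot use the subadditive ergodic theorem as in Andjel–Ferrari–Siri (afs), because I will later need Riemann hydrodynamics from every space-time-shifted initial position, and the shifted current Y_n = X_n ∘ θ^n does not inherit subadditivity. So instead I must derive large-deviation-type estimates for the current X_n that are robust under shifting: a large-deviation upper bound plus Borel–Cantelli gives a.s. convergence, and crucially these estimates carry over verbatim to Y_n because Y_n has the same marginal distribution as X_n for each fixed n. This is the main novelty, and the hardest part.

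Step 2 (Glimm-scheme approximation from Riemann to Cauchy). With a.s. Riemann hydrodynamics available from arbitrary shifted positions, I approximate the general entropy solution u(x,t) by a piecewise-constant profile and iterate the Riemann-solver building block, in the spirit of the Glimm scheme for conservation laws. The plan is: discretize the initial profile u_0 into a step function on a macroscopic mesh; on each mesh cell the local evolution looks, at leading order, like a Riemann problem, for which I already control the microscopic dynamics a.s. via Step 1. I then glue these local solutions, controlling the error accumulated over the mesh by means of an L1-type or Wasserstein distance between the microscopic empirical profile and the macroscopic entropy solution, exploiting the attractive coupling to compare configurations monotonically. The comparison uses the commutation (mapping_shift) and Markov (mapping_markov) properties of the graphical construction to relate the system started from η^N_0 at the shifted location to the Riemann building block. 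Taking the mesh to zero after N → ∞ and invoking the stability/contraction properties of entropy solutions (Kruzkov theory) yields convergence to u(x,t); uniformity on bounded time intervals follows from the uniform-in-time control of the error estimate together with the finite speed of propagation that finite range guarantees.

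I expect Step 1 — obtaining the shift-robust large-deviation estimates for the current in the Riemann setting — to be the principal obstacle, precisely because the standard subadditive argument is unavailable and must be replaced by a genuinely quantitative bound. A secondary difficulty lies in the error analysis of Step 2: controlling how the local Riemann approximations combine without losing the almost-sure (rather than in-probability) character of the convergence, which requires the estimates from Step 1 to be summable in N along the Glimm mesh so that Borel–Cantelli applies uniformly.
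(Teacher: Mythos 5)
Your strategy coincides with the paper's own: almost sure Riemann hydrodynamics established by shift-robust probabilistic estimates rather than subadditivity, followed by a Glimm-type approximation scheme using attractiveness and finite propagation to pass to general Cauchy data. However, as a proof the proposal has two genuine gaps, located exactly at the points you yourself defer as ``the principal obstacle'' and the ``secondary difficulty''. First, in the Riemann step it is not the case that large-deviation estimates for the current alone identify the limit. In the paper, the LD machinery --- exponential (Girsanov-type) martingale bounds for the current along a Poisson-perturbed observer path, spatial averaging over a window of width $\varepsilon t$, a Donsker--Varadhan-type upper bound for the space-time empirical measure, and a characterization of its subsequential limits as mixtures $\int \nu^r\,\gamma(dr)$ with $\gamma$ supported on $[\lambda,\rho]\cap\mathcal{R}$ (which itself requires a space-time ergodic theorem, proved in an appendix) --- only yields that all limit points of $t^{-1}\phi^v_t$ lie between $\inf$ and $\sup$ of $G(r)-vr$ over $[\lambda,\rho]\cap\mathcal{R}$. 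The matching sharp bound ($\limsup \le \inf$ when $\lambda<\rho$) is obtained by an entirely different mechanism: attractiveness, comparison with equilibrium processes at intermediate densities $r$, and control of the discrepancies by dominating them with independent random walks (the Andjel--Ferrari--Siqueira argument), which survives space-time shifts because it rests on distributional tail bounds for Poisson variables, not on subadditivity. Your proposal contains only the LD half; without the monotonicity/coupling half the Riemann limit is not identified.

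Second, in the Cauchy step the phrase ``controlling the error accumulated over the mesh by means of an $L^1$-type or Wasserstein distance, exploiting the attractive coupling'' names the right intention but omits the quantitative tool that makes it work almost surely: the Bramson--Mountford estimate that, for two coupled configurations with finitely many particles, the Kolmogorov-type distance $\Delta$ between cumulative empirical profiles exceeds its initial value by $\gamma$ at \emph{some} later time with probability at most $C(\abs{\eta_0}+\abs{\xi_0})e^{-cN\gamma}$. This exponential bound --- together with a microscopic finite-propagation lemma with exponential error probability, a uniform-in-time piecewise-constant approximation of $u(\cdot,t)$ (obtained via continuity of $t\mapsto\delta_\varepsilon(t)$ and Dini's theorem), and a time-modulus estimate comparing the empirical measure between discretization times by dominating particle displacements with random walks --- is what makes every error term summable in $N$, so that Borel--Cantelli produces a single full-probability event valid simultaneously for all time steps and then uniformly in time. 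Without identifying these estimates (or equivalents), the gluing argument you describe yields at best convergence in probability, i.e.\ the weak law the theorem is meant to strengthen; the almost-sure and time-uniform character of the conclusion is precisely what these exponential bounds buy.
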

 As a byproduct we derive  %We prove
 in Appendix \ref{appendix_strong-weak} the main result of \cite{bgrs2}:
\begin{corollary}\label{strong-weak} The strong law of large numbers \eqref{later_profile}
implies the weak law of large numbers established in \cite{bgrs2}.
\end{corollary}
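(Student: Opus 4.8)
The plan is to exploit the elementary fact that $\widetilde\Prob$-almost sure convergence forces convergence in probability, and thus to reduce the weak law to the strong statement \eqref{later_profile}, the only real work being to reconcile the two sets of hypotheses. The weak law of \cite{bgrs2} asserts convergence in probability of $\alpha^N(\eta_{Nt})$ to $u(\cdot,t)dx$ (equivalently, weak convergence of its law to the Dirac mass at $u(\cdot,t)dx$ on $\mathcal M^+(\R)$) whenever the initial distributions are \emph{associated} with $u_0$, meaning $\alpha^N(\eta_0)\to u_0(\cdot)dx$ in probability; it is moreover proved under assumption \emph{(A2)} alone, whereas Theorem \ref{th:hydro} additionally requires $p$ finite range and the $\Prob_0$-a.s. initial convergence \eqref{initial_profile_vague}. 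So I must close two gaps: promoting in-probability initial convergence to a.s. initial convergence, and passing from finite range to finite first moment.

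For the first gap I would argue along subsequences. To prove that $\int\psi\,d\alpha^N(\eta_{Nt})\to\int\psi u(\cdot,t)$ in probability it suffices to show that every subsequence admits a further subsequence along which the convergence holds $\widetilde\Prob$-a.s. Given a subsequence, the hypothesis that $\alpha^N(\eta_0)\to u_0(\cdot)dx$ in probability, together with metrizability of the vague topology, yields a further subsequence $(N_j)$ along which $\alpha^{N_j}(\eta_0)\to u_0(\cdot)dx$ holds $\Prob_0$-a.s.; realizing the corresponding initial configurations as coordinate maps on a common space $\Omega_0$, the hypothesis \eqref{initial_profile_vague} of Theorem \ref{th:hydro} is met along $(N_j)$. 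The strong law then gives $\widetilde\Prob$-a.s. convergence of $\alpha^{N_j}(\eta_{N_jt})$, uniformly on bounded time intervals, which is exactly what the subsequence criterion needs; this also transfers the uniform-in-time feature to the weak statement.

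The second gap is where the real difficulty lies. For $p$ of finite first moment I would suppress all jumps of length $>M$, obtaining a finite-range kernel $p_M$ (renormalized to a probability if one insists), run the dynamics it generates, and couple it to the full dynamics on the graphical construction by using the same marks $(t_m,x_m,z_m,u_m)$ and simply discarding those with $|z_m|>M$. In any macroscopic space-time box the expected number of discarded updates per site is controlled by $\sum_{|z|>M}|z|p(z)$, which tends to $0$ as $M\to\infty$; a mass-transport estimate then bounds, uniformly in $N$, the discrepancy between the two empirical measures in probability. Since the truncated flux $G_M$ converges to $G$ and the entropy solution depends $L^1$-continuously on the flux, $u_M(\cdot,t)\to u(\cdot,t)$, and a $3\varepsilon$ argument combining the truncated weak law (already supplied by the first two paragraphs) with these two approximations yields the weak law for the full system. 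The main obstacle is thus this last approximation: making the coupling bound on discarded long jumps uniform in the scaling parameter $N$ and matching it with the flux-stability of entropy solutions.
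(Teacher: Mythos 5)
Your first step is correct and is, in substance, the paper's own proof dressed differently: the paper also observes that the weak law of \cite{bgrs2} concerns only the marginal laws $\mu^N_{Nt}$, and it upgrades the in-probability initial hypothesis to the a.s. hypothesis \eqref{initial_profile_vague} by invoking Skorokhod's representation theorem (producing $\eta^N_0\sim\mu^N_0$ on one space $\Omega_0$ with $\alpha^N(\eta^N_0)\to u_0(.)dx$ $\Prob_0$-a.s.), after which Theorem \ref{th:hydro} applies. Your subsequence criterion achieves the same upgrade without Skorokhod; the one point you should make explicit is that Theorem \ref{th:hydro} is stated for the full sequence of scalings $N\in\N$, so to use it along $(N_j)$ you must either note that its proof is insensitive to the choice of scaling sequence, or fill in the indices $N\notin\{N_j\}$ with deterministic configurations whose empirical measures converge to $u_0(.)dx$; this is routine, but it is a step, not a tautology.

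The second half of the proposal is where there is a genuine problem, on two counts. First, its premise is false: the weak law of \cite{bgrs2} is \emph{not} proved under {\em (A2)} alone. Its passage from Riemann to Cauchy data rests on the same finite-range tools quoted in the present paper, namely the macroscopic stability estimate of \cite{bm} (Proposition \ref{lemma_bm}) and the microscopic finite propagation property (Lemma \ref{finite_prop}, which is \cite[Lemma 5.2]{bgrs2} and is meaningless for kernels with unbounded jumps). So the hypotheses of Theorem \ref{th:hydro} already match those of \cite{bgrs2}, there is no second gap to bridge, and accordingly the paper's proof consists of the Skorokhod step only. Second, even granting your premise, the truncation sketch would fail quantitatively. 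On the hydrodynamic scale the process runs for microscopic time $Nt$, so each site sees on average $Nt\,\norm{b}_\infty\sum_{|z|>M}p(z)$ discarded marks --- not a quantity that is small for fixed $M$. The direct effect of these marks on $\Delta$ can still be tamed by a crossing argument, but each discarded mark leaves the full and truncated systems in different states, after which they evolve under \emph{different} generators and every subsequent common mark can act on them differently; controlling these secondary discrepancies is precisely what macroscopic stability does, and it is only available for two copies of the \emph{same} finite-range dynamics. Iterating Proposition \ref{lemma_bm} across the $O\bigl(N^2\sum_{|z|>M}p(z)\bigr)$ perturbation events in the relevant space-time window makes the additive errors $\gamma$ and the failure probabilities accumulate fatally, so no bound uniform in $N$ comes out. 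Obtaining such a coupling bound for non-finite-range kernels is essentially the problem of extending macroscopic stability beyond finite range, which required substantial new ideas in later work; it is not a routine mass-transport estimate. (Your flux step $G_M\to G$ is also delicate, since $G_M$ is defined through the non-explicit invariant measures of the truncated dynamics.)
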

We recall from \cite[pp.1346--1347 and Lemma 4.1]{bgrs2} the
definition of the Lipschitz-continuous macroscopic flux function
$G$. For  $\rho\in\mathcal R\subseteq [0,K]$  (the  closed
 set of allowed densities for the system, defined in
Proposition \ref{invariant} below)  let \be \label{flux}
G(\rho)=\nu^\rho\left[\sum_{z\in\Z}zp(z)b(\eta(0),\eta(z))%j(\eta)
\right]; \ee
%where $j$ is the {\em microscopic current}, here given by
%$j(\eta)=\sum_{z}zp(z)b(\eta(0),\eta(z))$;
 this represents the expectation, under the shift invariant
equilibrium measure with density $\rho$ (see Proposition
\ref{invariant}), of the microscopic current through site
$0$. %\\
On the complement of $\mathcal R$, which is at most a countable
union of disjoint open intervals, $G$ is interpolated linearly.
 A  Lipschitz constant $V$ of $G$ is determined by the
rates $b,p$ in \eqref{generator}:
\begin{eqnarray*}\label{lipsch-const} V&=&2B\sum_{z\in\Z}|z|p(z),\qquad\mbox{ with }\\
B&=&\sup_{0\le a\le K,0\le k<
K}\{b(a,k)-b(a,k+1),b(k+1,a)-b(k,a)\} \end{eqnarray*}
This constant will be a key tool for the finite propagation property
of the particle system (see Proposition \ref{standard} below).
\\ \\
In \cite[Section 2.2]{bgrs2} we discussed the different definitions
of \emph{entropy solutions} (loosely speaking : the physically
relevant solutions)  to an equation such as \eqref{hydrodynamics},
and the ways to prove their existence and uniqueness. Therefore we
just briefly recall the definition of entropy solutions based on
shock admissibility conditions (Ole{\u\i}nik's entropy condition),
valid only for solutions with  locally bounded space-time variation
(\cite{vol}). A weak solution to \eqref{hydrodynamics} is an entropy
solution if and only if, for a.e. $t>0$, all discontinuities of
$u(.,t)$ are entropy shocks. A discontinuity $(u^-,u^+)$, with
$u^\pm:=u(x\pm 0,t)$, is called an entropy shock, if and only if:\\
\[%be \label{oleinik}
\ba{l} \mbox{The chord of the graph of $G$
between $u^-$ and $u^+$ lies}\\
\mbox{below the graph if $u^-<u^+$, above the graph if
$u^+<u^-$.}\ea\]%ee
In the above condition, ``below" or ``above" is meant in wide
sense,  that is the graph and chord may
coincide at some points between $u^-$ and $u^+$. \\
 If the initial datum $u_0(.)$ has locally bounded
space-variation, there exists a unique entropy solution to
\eqref{hydrodynamics}, within the class of functions of locally
bounded space-time variation (\cite{vol}).
%
%
%The overall strategy to prove Theorem \ref{th:hydro} is the
%constructive method we initiated in \cite{bgrs1}; what changes
%deeply is the nature and level of generality of each step.  We first
%consider space-time shifted Riemann initial conditions. We then
%obtain the result for general initial profiles by an approximation
%scheme, where space-time shifted Riemann profiles are used to
%approximate the general profile.
%

%
\subsection{Monotonicity and invariant measures}
Assumption {\em (A4)} implies the following monotonicity property,
 crucial in our approach. For the coordinatewise partial
order on $\bf X$, defined by $\eta\leq\zeta$ if and only if
$\eta(x)\leq\zeta(x)$ for every $x\in\Z$, we have
\be \label{attractive_1} (\eta_0,t)\mapsto\eta_t(\eta_0,\omega)
\mbox{ is nondecreasing w.r.t. }\eta_0 \ee
for every $\omega$ such that this mapping is well defined.
The partial order on $\bf X$ induces a partial stochastic order on
${\mathcal P}(\mathbf{X})$; namely, for  $\mu_1,\mu_2\in{\mathcal
P}(\mathbf{X})$,
%probability measures $\mu_1$ and $\mu_2$ on $\mathbf X$;
we write $\mu_1\leq\mu_2$ if the following equivalent
conditions
hold  (see {\em e.g.} \cite{lig1}, \cite{strassen}):\\ \\
i) For every non-decreasing nonnegative function $f$ on $\mathbf
X$, $\mu_1(f)\leq\mu_2(f)$.\\
ii) There exists a coupling measure $\widetilde{\mu}$ on $\mathbf
{\widetilde X}=\mathbf {X}\times\mathbf {X}$ with marginals
$\mu_1$ and $\mu_2$, such that
$\widetilde{\mu}\{(\eta,\xi):\,\eta\leq\xi\}=1$.\\ \\
It follows from this and \eqref{attractive_1} that
\be \label{attractive_2} \mu_1\leq\mu_2\Rightarrow\forall
t\in\R^+,\,\mu_1 S(t)\leq\mu_2 S(t) \ee
Either property \eqref{attractive_1} or \eqref{attractive_2} is
usually called {\em attractiveness}.\\ \\
Let $\mathcal I$ and $\mathcal S$ denote respectively the set of
invariant probability measures for $L$, and the set of
shift-invariant probability measures on $\mathbf{X}$. We derived in
\cite[Proposition 3.1]{bgrs2}, that
\begin{proposition}\label{invariant}
\be \label{characterize_2} \left({\mathcal I}\cap{\mathcal
S}\right)_e= \left\{\nu^\rho,\,\rho\in{\mathcal R}\right\}\ee
 with $\mathcal R$ a closed subset of $[0,K]$ containing $0$
and $K$, and $\nu^\rho$ a shift-invariant measure such that
$\nu^\rho[\eta(0)]=\rho$.
(The index $e$ denotes extremal elements.)\\
The measures $\nu^\rho$ are stochastically ordered:
\be \label{ordered_measures}
\rho\leq\rho'\Rightarrow\nu^\rho\leq\nu^{\rho'}\ee
\end{proposition}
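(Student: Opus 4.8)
The plan is to parametrize the extremal elements of $\mathcal I\cap\mathcal S$ by the single scalar $\rho(\mu):=\mu[\eta(0)]$, the mean density, which is well defined and weakly continuous on $\mathcal P(\mathbf X)$ since occupation numbers are bounded by $K$ and $\mu$ is shift-invariant. The set $\mathcal I\cap\mathcal S$ is convex and weakly compact, as a closed subset of the compact $\mathcal P(\mathbf X)$, so by Choquet theory it is the closed convex hull of $(\mathcal I\cap\mathcal S)_e$ and it suffices to understand the extremal measures. It is nonempty: by (A3) the configurations $\eta\equiv 0$ and $\eta\equiv K$ are absorbing, so the Dirac masses on them are extremal elements of $\mathcal I\cap\mathcal S$ with densities $0$ and $K$, whence $\{0,K\}\subseteq\mathcal R$.

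The core of the argument, and the step I expect to be the main obstacle, is to prove that any two $\nu_1,\nu_2\in(\mathcal I\cap\mathcal S)_e$ are stochastically comparable. Here I would use attractiveness \eqref{attractive_1}--\eqref{attractive_2} through the basic monotone coupling: run the coupled process $(\eta_t,\zeta_t)$ and let $\bar\nu$ be an extremal element among the shift-invariant measures invariant for the coupled dynamics, with marginals $\nu_1,\nu_2$. The key lemma, in the spirit of the discrepancy-annihilation mechanism classical for attractive systems (\cite{coc}, \cite{lig1}) and of \cite[Proposition 3.1]{bgrs2}, is that under $\bar\nu$ the two signs of discrepancy cannot coexist: in the monotone coupling a positive discrepancy $\{\eta(x)>\zeta(x)\}$ and a negative one $\{\eta(y)<\zeta(y)\}$ are transported by the common clocks until they meet and annihilate, strictly decreasing the total number of discrepancies, which is incompatible with stationarity of $\bar\nu$ unless one sign is $\bar\nu$-almost surely absent. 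Hence $\bar\nu$ concentrates on $\{\eta\le\zeta\}$ or on $\{\eta\ge\zeta\}$, giving $\nu_1\le\nu_2$ or $\nu_2\le\nu_1$ in the stochastic order defined above. Making the annihilation rigorous is the delicate point: it requires controlling the current of discrepancies across sites, for which irreducibility (A1) and the finite first moment (A2) are used.

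Granting this total ordering, the conclusion follows quickly. The density is strictly monotone along the order: $\nu_1\le\nu_2$ gives $\rho(\nu_1)\le\rho(\nu_2)$ since $\eta\mapsto\eta(0)$ is nondecreasing, and if moreover $\rho(\nu_1)=\rho(\nu_2)$ then the ordered coupling forces $\eta(0)=\zeta(0)$ almost surely, hence $\eta=\zeta$ almost surely by shift-invariance, so $\nu_1=\nu_2$. Thus $\rho$ is an order-preserving bijection of $(\mathcal I\cap\mathcal S)_e$ onto a set $\mathcal R\subseteq[0,K]$ containing $0$ and $K$; calling its inverse $\rho\mapsto\nu^\rho$ yields \eqref{characterize_2}, while order-preservation is precisely \eqref{ordered_measures}. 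Finally I would show $\mathcal R$ is closed: for $\rho_n\in\mathcal R$ with $\rho_n\to\rho$, compactness extracts a weak limit $\nu$ of $(\nu^{\rho_n})$ in $\mathcal I\cap\mathcal S$ with $\rho(\nu)=\rho$ by continuity of the density; since the whole family is totally ordered and stochastic order passes to weak limits, $\nu$ is a monotone limit of the family, and a short separate check, again using the coupling, shows it is extremal, so $\rho\in\mathcal R$.
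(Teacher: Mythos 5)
The first thing to note is that the paper contains no proof of Proposition \ref{invariant} at all: it is quoted verbatim from \cite[Proposition 3.1]{bgrs2}. So the benchmark is the proof in that reference, which follows the classical scheme for attractive systems on $\Z$ (Liggett \cite{lig1}, Cocozza-Thivent \cite{coc}, Andjel--Vares \cite{av}): couple two extremal elements of ${\mathcal I}\cap{\mathcal S}$, show that an extremal shift-invariant measure which is invariant for the coupled dynamics must concentrate on ordered pairs, deduce that $({\mathcal I}\cap{\mathcal S})_e$ is totally ordered, and parametrize it by the mean density. Your outline is exactly this scheme, so in spirit you are on the same route as the cited source; the question is whether your sketch could actually be completed, and here there are two real gaps.

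(a) The annihilation lemma. Your argument is that coexisting discrepancies of opposite sign ``annihilate, strictly decreasing the total number of discrepancies, which is incompatible with stationarity.'' As stated this has no meaning: under any shift-invariant coupling measure the number of discrepancies is almost surely either zero or infinite (a shift-invariant law cannot charge finite nonempty discrepancy sets), so no contradiction with stationarity can be extracted from a decrease of an infinite quantity. The rigorous version must work with \emph{local} quantities --- e.g.\ the expectation under the stationary coupled measure of the number of ordered/disordered pairs in a fixed window, computed through the coupled generator --- and uses irreducibility \emph{(A1)} to show that coexistence of opposite discrepancies forces a strictly positive annihilation rate of such local expectations, contradicting stationarity. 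You correctly flag this as the delicate point, but it is the entire content of the proposition, and your proposal contains only the heuristic, not the proof.

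(b) Closedness of ${\mathcal R}$. Here your claim that ``a short separate check, again using the coupling, shows it is extremal'' would fail as written. A weak (even stochastically monotone) limit $\nu$ of the extremal measures $\nu^{\rho_n}$ need not be extremal, and total ordering by itself does not prevent $\nu$ from being a nontrivial mixture $\int\nu^r\gamma(dr)$ with $\gamma$ of mean $\rho$: stochastic domination of a mixture does not localize the mixing measure, so $\nu\geq\nu^{\rho_n}$ gives you nothing about the support of $\gamma$. The known repair uses the a.s.\ density property of Proposition \ref{Rezakhanlou} (quoted from \cite{rez}): writing $M(\eta):=\lim_l(2l+1)^{-1}\sum_{x=-l}^{l}\eta(x)$, a monotone coupling of $\nu^{\rho_n}\leq\nu$ gives $M\geq\rho_n$ $\nu$-a.s., so $\gamma$ is supported on $[\rho,K]$, and since its mean is $\rho$ one gets $\gamma=\delta_\rho$, hence $\rho\in{\mathcal R}$ and $\nu=\nu^\rho$. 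This is precisely the argument the present paper runs in the proof of Lemma \ref{lemma_empirical}(ii); without this ingredient (or an equivalent ergodicity statement for the $\nu^r$) your closedness step has a hole.
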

Moreover we have, quoting  \cite[Lemma 4.5]{rez}:
\begin{proposition}\label{Rezakhanlou}
The measure $\nu^\rho$ has {\em a.s.} density $\rho$, that is
\[
\lim_{l\to\infty}\frac{1}{2l+1}\sum_{x=-l}^l\eta(x)=\rho,
\quad \nu^\rho-\mbox{a.s.}
\]
\end{proposition}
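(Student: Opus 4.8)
The plan is to recognise the statement as a spatial ergodic theorem for the shift-invariant measure $\nu^\rho$ and to reduce it, via Birkhoff's theorem, to showing that the spatial density is almost surely \emph{constant}. Since $\nu^\rho$ is shift-invariant by Proposition \ref{invariant} and $\eta(0)\le K$ is bounded, Birkhoff's pointwise ergodic theorem for the $\Z$-action generated by $\tau_1$ applies and yields, $\nu^\rho$-almost surely, $\frac{1}{2l+1}\sum_{x=-l}^{l}\eta(x)\to D$ as $l\to\infty$, where $D$ is the conditional expectation of $\eta(0)$ given the $\sigma$-field $\mathcal J$ of shift-invariant events; moreover $\nu^\rho(D)=\nu^\rho(\eta(0))=\rho$. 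Everything thus reduces to proving that $D$ is $\nu^\rho$-a.s. constant, for then it must equal its mean $\rho$.

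The first key step is that the macroscopic density is conserved by the dynamics: I would show that for $\nu^\rho$-a.e. initial configuration, $\widetilde\Prob$-almost surely $D$ is well defined along the whole trajectory and $D(\eta_t)=D(\eta_0)$ for every $t$. Indeed $\sum_{x=-l}^{l}\eta_t(x)-\sum_{x=-l}^{l}\eta_0(x)$ equals the net number of particles that have crossed the two cuts at $\pm l$ up to time $t$; by assumption {\em (A2)} the mean rate of jumps across a single cut is bounded by $||b||_\infty\sum_{z}|z|p(z)<\infty$, uniformly in $l$, so after dividing by $2l+1$ a Markov inequality along $l=2^k$ together with Borel--Cantelli and an interpolation between consecutive dyadic scales forces this boundary term to vanish. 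Hence the Birkhoff limit is unchanged by the evolution.

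I would then combine conservation of $D$ with extremality to conclude. For any Borel set $B\subseteq\R$ the event $\{D\in B\}$ lies in $\mathcal J$, hence is shift-invariant, so the conditioned measure $\nu^\rho(\,\cdot\mid D\in B)$ is again shift-invariant. Conservation of $D$ gives, for every local $g$, the identity $\indicator_{\{D\in B\}}\,S(t)g=S(t)\bigl(\indicator_{\{D\in B\}}\,g\bigr)$ up to a $\nu^\rho$-null set, because along $\nu^\rho$-typical trajectories $D(\eta_t)=D(\eta_0)$; combined with $\nu^\rho\in\mathcal I$ this yields $\nu^\rho(\,\cdot\mid D\in B)\in\mathcal I$. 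Thus $\nu^\rho$ is written as the convex combination $\nu^\rho(D\in B)\,\nu^\rho(\,\cdot\mid D\in B)+\nu^\rho(D\in B^{c})\,\nu^\rho(\,\cdot\mid D\in B^{c})$ of two elements of $\mathcal I\cap\mathcal S$. Extremality of $\nu^\rho$ in $(\mathcal I\cap\mathcal S)_e$ (Proposition \ref{invariant}) forces $\nu^\rho(D\in B)\in\{0,1\}$; applying this to $B=(-\infty,c]$ for all real $c$ shows that $D$ is $\nu^\rho$-a.s. constant, equal to $\rho$, which is the claim.

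The conceptual heart, and the step I expect to require the most care, is the conservation of the macroscopic density: the fact that conditioning on the shift-invariant event $\{D\in B\}$ preserves time-invariance hinges entirely on the almost sure identity $D(\eta_t)=D(\eta_0)$, and establishing it rigorously means controlling the boundary current uniformly in $l$ even though jump lengths are only integrable by {\em (A2)} rather than bounded. Once this conservation is secured, extremality of $\nu^\rho$ does the remaining work with no further input on the (non-explicit) structure of $\nu^\rho$.
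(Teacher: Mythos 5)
There is no in-paper proof to compare against here: the paper states this proposition as a quotation of \cite[Lemma 4.5]{rez} and gives no argument of its own. So your proposal has to be judged on its own merits, and on those merits it is correct; moreover it is, in substance, the classical proof of the cited lemma. The three ingredients — Birkhoff's theorem for the symmetric averages (legitimate because the invariant $\sigma$-fields of $\tau_1$ and $\tau_1^{-1}$ coincide, so both one-sided averages converge to the same conditional expectation $D$), conservation of the macroscopic density along the evolution, and extremality of $\nu^\rho$ in ${\mathcal I}\cap{\mathcal S}$ to force $D$ to be a.s.\ constant equal to its mean $\rho$ — are exactly the right ones, and your conservation estimate (rate of crossings of a single cut bounded by $||b||_\infty\sum_z|z|p(z)$ uniformly in $l$, by {\em (A2)}) is sound. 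It is also worth noting that your scheme is the purely spatial analogue of what the paper itself does in Appendix \ref{appendix_ergo} for Proposition \ref{prop_ergo}: there too one conditions on an invariant event and uses extremality in ${\mathcal I}\cap{\mathcal S}$ to get a zero--one law, with invariance under time shifts playing the role that your conservation law $D(\eta_t)=D(\eta_0)$ plays here.

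Two small technical remarks. First, the dyadic interpolation you anticipate is unnecessary: for fixed $t$ the limits $D(\eta_0)$ and $D(\eta_t)$ both exist a.s.\ (since $\eta_t\sim\nu^\rho$ by invariance), so it suffices that the normalized boundary current vanish a.s.\ along the single subsequence $l=2^k$, which Markov's inequality and Borel--Cantelli give; and the conditioning argument only ever uses the identity $D(\eta_t)=D(\eta_0)$ for each fixed $t$ separately, not simultaneously for all $t$. Second, in proving $\nu^\rho(\,\cdot\mid D\in B)\in{\mathcal I}$ you apply invariance of $\nu^\rho$ to the function $\mathbf 1_{\{D\in B\}}\,g$, which is bounded measurable but not continuous; this is legitimate because $\nu^\rho S(t)=\nu^\rho$ as an identity between measures, hence $\int S(t)h\,d\nu^\rho=\int h\,d\nu^\rho$ for all bounded measurable $h$, but the step deserves an explicit line since the semigroup is initially defined on $C({\mathbf X})$.
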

%
%
%For the proof, see Appendix \ref{appendix_ergo}.
%
%
%
%
By  \cite[Theorem 6]{kk}, \eqref{ordered_measures} implies existence
of a probability space on which one can define random variables
satisfying
\be\label{marginals}\eta^\rho\sim\nu^\rho\ee
and, with probability one,
\be \label{strassen} \eta^\rho\leq\eta^{\rho'},\, \forall
\rho,\rho'\in{\mathcal R}\mbox{ such that }\rho\leq\rho' \ee
Proceeding as in the proof of \cite[Theorem 7]{kko}, one can also
require (but we shall not use this property)  the joint distribution
of $(\eta^\rho:\rho\in{\mathcal R})$ to  be invariant by the spatial
shift $\tau_x$.
In the special case where $\nu^\rho$ are product measures, that is
when the function $b(.,.)$ satisfies assumptions of \cite{coc},
such a family of random variables can be constructed explicitely:
if $\left(U_x\right)_{x\in\Z}$ is a family of i.i.d. random
variables uniformly distributed on $(0,1)$, one defines
\be\label{product_coupling} \eta^\rho(x)=F_{\rho}^{-1}(U_x) \ee
where $F_\rho$ is the cumulative distribution function of the
single site marginal distribution of $\nu^\rho$. We will assume
without loss of generality (by proper enlargement) that the
``initial'' probability space $\Omega_0$ is large enough to define
a family of random variables satisfying
\eqref{marginals}--\eqref{strassen}.\\ \\
An important result for our approach is a {\em space-time} ergodic
theorem for particle systems mentioned in \cite{rez}, which we state
here in a general form,  and prove in Appendix
\ref{appendix_ergo}.
\begin{proposition}
\label{prop_ergo} Let $(\eta_t)_{t\ge 0}$ be a Feller process on
$\bf X$ with  a translation invariant generator $L$,  that is
\be\label{translation_gen}\tau_1 L\tau_{-1}=L\ee
Assume further that
\[%be\label{extremality}
\mu\in({\mathcal I}_L\cap{\mathcal S})_e\]%ee
where ${\mathcal I}_L$ denotes the set of invariant measures for
$L$. Then, for any local function $f$ on $\mathbf X$, and any $a>0$
\be \label{ergo}\lim_{\ell\to\infty}\frac{1}{a\ell^2}\int_0^{a\ell}
\sum_{i=0}^\ell\tau_i f(\eta_t)dt=\int f
d\mu=\lim_{\ell\to\infty}\frac{1}{a\ell^2}\int_0^{a\ell}
\sum_{i=-\ell}^{-1}\tau_i f(\eta_t)dt\ee
a.s. with respect to the law of the process with initial
distribution $\mu$.
\end{proposition}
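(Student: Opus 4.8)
The plan is to lift the problem to the path space of the process and recognize the double average as an ergodic average for a joint space-time group action. Let $\mathbb{Q}$ denote the law on the two-sided path space $D(\R,{\bf X})$ of the stationary Markov process with one-time marginal $\mu$; since $\mu\in{\mathcal I}_L$, stationarity under $(S(t))$ allows one to extend the process to negative times by Kolmogorov's theorem, so that the time shift $\Theta_s$, $(\Theta_s\omega)(\cdot)=\omega(\cdot+s)$, is defined for all $s\in\R$ and preserves $\mathbb{Q}$. Because $L$ is translation invariant \eqref{translation_gen} and $\mu\in{\mathcal S}$, the spatial shift $\Sigma_x$, $(\Sigma_x\omega)(t)=\tau_x(\omega(t))$, also preserves $\mathbb{Q}$, and $\Theta_s$ commutes with $\Sigma_x$. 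Thus $(\Theta_s\Sigma_x)_{(s,x)\in\R\times\Z}$ is a measure-preserving action of the abelian group $G=\R\times\Z$ on $(D(\R,{\bf X}),\mathbb{Q})$. Writing $F(\omega)=f(\omega(0))$, we have $\tau_i f(\eta_t)=F(\Theta_t\Sigma_i\omega)$, so the left-hand side of \eqref{ergo} equals $|B_\ell|^{-1}\int_{B_\ell}F(\Theta_t\Sigma_i\omega)\,dt\,di$ up to the negligible discrepancy between $\ell+1$ and $\ell$, where $B_\ell=[0,a\ell]\times\{0,\dots,\ell\}$ and $|B_\ell|=a\ell(\ell+1)\sim a\ell^2$ is its measure for $\lambda_\R\times\lambda_\Z$.

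Next I would identify the ergodic-theoretic content of the extremality hypothesis. There is an affine bijection $\mu'\leftrightarrow\mathbb{Q}_{\mu'}$ between $L$-invariant, shift-invariant probability measures on ${\bf X}$ and $G$-invariant laws on $D(\R,{\bf X})$ under which the canonical process is Markov with semigroup $(S(t))$ and stationary: such a law is determined by its one-time marginal, and space-shift invariance of $\mathbb{Q}_{\mu'}$ amounts to shift invariance of that marginal. The crucial point is that the ergodic decomposition of $\mathbb{Q}$ under the $G$-action stays within this class of stationary Markov laws, because the Markov property relative to $(S(t))$ is preserved under conditioning on the $G$-invariant $\sigma$-field. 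Hence the $G$-ergodic components of $\mathbb{Q}$ are of the form $\mathbb{Q}_{\mu'}$ with $\mu'\in{\mathcal I}_L\cap{\mathcal S}$, and the condition $\mu\in({\mathcal I}_L\cap{\mathcal S})_e$ is equivalent to $\mathbb{Q}$ being $G$-ergodic.

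With ergodicity in hand, the conclusion follows from a pointwise (Wiener-type) ergodic theorem for actions of the amenable group $G=\R\times\Z$. The rectangles $B_\ell$ have fixed aspect ratio and grow to infinity in both coordinates, hence form a tempered F{\o}lner sequence; applying the multiparameter ergodic theorem (in Lindenstrauss' or Tempelman's form) to $F\in L^\infty(\mathbb{Q})\subset L^1(\mathbb{Q})$ gives $|B_\ell|^{-1}\int_{B_\ell}F\circ\Theta_t\Sigma_i\to\Exp_{\mathbb{Q}}[F\mid{\mathcal G}]$ $\mathbb{Q}$-a.s., where ${\mathcal G}$ is the $\sigma$-field of $G$-invariant events. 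Ergodicity makes ${\mathcal G}$ trivial, so the limit is $\Exp_{\mathbb{Q}}[F]=\int f\,d\mu$. The second equality in \eqref{ergo} is obtained identically using the rectangles $[0,a\ell]\times\{-\ell,\dots,-1\}$, which form an equally admissible averaging sequence.

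I expect the main obstacle to be the two structural lemmas rather than the averaging itself: first, the construction of the two-sided stationary law and the verification that $\Theta$ and $\Sigma$ jointly preserve $\mathbb{Q}$ and commute; and second, the passage from extremality of $\mu$ in ${\mathcal I}_L\cap{\mathcal S}$ to $G$-ergodicity of $\mathbb{Q}$, whose delicate point is precisely that the ergodic decomposition of $\mathbb{Q}$ does not leave the class of stationary Markov laws. Verifying that the rectangles $B_\ell$ satisfy the temperedness/regularity hypotheses of the chosen multiparameter ergodic theorem is routine but should be stated explicitly, as should the boundedness of $f$ guaranteeing $F\in L^1(\mathbb{Q})$.
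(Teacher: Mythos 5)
Your overall architecture is viable and genuinely different in execution from the paper's: where you invoke general machinery (two-sided stationary extension, ergodic decomposition for the $\R\times\Z$ action, and a Tempelman/Lindenstrauss pointwise ergodic theorem along the rectangles $B_\ell$), the paper builds everything by hand. It first proves its own two-parameter Birkhoff-type result (Proposition \ref{prop_ravi}, obtained from two applications of the one-dimensional ergodic theorem plus Egorov's theorem to control the non-uniformity of the inner averages), which yields \emph{existence} of the limit; it then runs a separate \emph{identification} step on path space. The averaging half of your argument is sound: the rectangles form an increasing, regular (hence tempered) F{\o}lner sequence in $\R\times\Z$, $F$ is bounded, and the fixed translation needed to treat the second family $[0,a\ell]\times\{-\ell,\dots,-1\}$ is harmless because the limit function is invariant under the action. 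Your first structural lemma (two-sided extension, commuting measure-preserving shifts) is also standard and unproblematic.

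The gap is in your second structural lemma, and you have in fact located it yourself: the assertion that the $G$-ergodic components of $\mathbb{Q}$ are again stationary Markov laws with semigroup $(S(t))$ --- equivalently, that extremality of $\mu$ in ${\mathcal I}_L\cap{\mathcal S}$ forces $G$-ergodicity of $\mathbb{Q}$ --- is supported only by the phrase ``because the Markov property is preserved under conditioning on the $G$-invariant $\sigma$-field.'' That sentence is not a proof; it is a restatement of exactly the point at issue, and it is where essentially all the probabilistic content of the proposition lives. The paper's identification step is, in effect, the proof of this claim: for a space-time invariant event $F$ one sets $g(\eta)=P_\eta(F)$, shows $g$ is $\tau$-invariant using \eqref{translation_gen}, uses the Markov property and $T_t$-invariance of $F$ to write $g(\eta_t)=P_\mu(F\mid{\mathcal F}_t)$, which by martingale convergence tends a.s. to ${\bf 1}_F$; stationarity ($\eta_t\sim\mu$ for all $t$) then forces $g={\bf 1}_G$ to be an indicator, the events $\{\eta_t\in G\}$ and $F$ agree up to $P_\mu$-null sets for every $t$, and finally, if $\mu(G)\notin\{0,1\}$, the conditioned measures $\mu_G$ and $\mu_{{\bf X}\setminus G}$ are shown to lie in ${\mathcal I}_L\cap{\mathcal S}$, contradicting extremality. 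Without this martingale/harmonic-function argument (or a citation to a precise reference establishing that ergodic decomposition preserves Markovianity for the joint space-time action), your proof is incomplete at its crux; with it supplied, your route closes and is a clean, if heavier-machinery, alternative to the paper's.
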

\begin{remark}
\label{remark_ergo}
It follows from \eqref{ergo} that, more generally,
\be
\label{ergo2}\lim_{\ell\to\infty}\frac{1}{(b-a)(d-c)\ell^2}\int_{a\ell}^{b\ell}
\sum_{i\in\Z\cap[cl,dl]}\tau_i f(\eta_t)dt=\int f d\mu\ee
for every $0\leq a<b$ and $c<d$ in $\R$, as can be seen by
decomposing the space-time rectangle $[al,bl]\times[cl,dl]$ into
rectangles containing the origin.
\end{remark}
\section{Almost sure Riemann hydrodynamics}
\label{riemann}
By definition, the Riemann problem with values
$\lambda,\rho\in[0,K]$ for \eqref{hydrodynamics} is the Cauchy
problem  for the particular initial condition
\begin{equation}\label{eq:rie}
R_{\lambda,\rho}^0(x)=\lambda \mathbf 1_{\{x<0\}}+\rho \mathbf
1_{\{x\geq 0\}}
\end{equation}
The  entropy solution for this Cauchy datum will be denoted in the
sequel by $R_{\lambda,\rho}(x,t)$.
In this section, we derive the corresponding almost sure
hydrodynamic limit when $\lambda,\rho\in{\mathcal R}$.
We will use the following variational representation of the Riemann
problem. We henceforth assume $\lambda<\rho$ (for the case
$\lambda>\rho$, replace minimum with maximum below and make
subsequent changes everywhere in the section).
\begin{proposition}\label{proposition_1}
(\cite[Proposition 4.1]{bgrs2}).
Let $\lambda,\rho\in[0,K]$, $\lambda<\rho$. \\
i) There is a countable set $\Sigma_{low}\subset[\lambda,\rho]$
(depending only on the differentiability properties of the convex
envelope of $G$ on $[\lambda,\rho]$) such that, for every
$v\in\R\setminus\Sigma_{low}$, $G(\cdot)-v{\cdot}$ achieves its
minimum over $[\lambda,\rho]$ at a unique point $h_c(v)$. Then
$R_{\lambda,\rho}(x,t)=h_c(x/t)$ whenever $x/t\not\in\Sigma_{low}$. \\
ii) Suppose $\lambda,\rho\in{\mathcal R}$. Then the previous minimum
is unchanged if restricted to $[\lambda,\rho]\cap\mathcal R$. As a
result, the Riemann entropy solution is {\em a.e.} $\mathcal
R$-valued.
\end{proposition}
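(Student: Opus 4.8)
The plan is to recognize Proposition \ref{proposition_1} as the classical lower-convex-envelope (Lax--Oleinik) construction of the self-similar entropy solution of the Riemann problem, reorganized around the argmin map $h_c$, and to add the short convexity argument needed for part ii).

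First I would introduce the lower convex envelope $\hat G$ of $G$ on $[\lambda,\rho]$, i.e. the largest convex function $\le G$ there; since $G$ is Lipschitz, so is $\hat G$. The key duality lemma is that for every $v$ one has $\min_{w\in[\lambda,\rho]}(G(w)-vw)=\min_{w\in[\lambda,\rho]}(\hat G(w)-vw)$, and the minimizers of the left-hand problem are exactly the minimizers of the right-hand one at which $\hat G=G$. This follows from the definition of $\hat G$ via supporting lines: a line of slope $v$ supports $\hat G$ from below precisely on the set where $v$ lies in the subdifferential of $\hat G$, and $\hat G$ coincides with $G$ at the endpoints of its maximal affine pieces. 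The set of minimizers of $\hat G(\cdot)-v{\cdot}$ is then an interval, nondegenerate exactly when $\hat G$ is affine of slope $v$ on it. A convex function has at most countably many maximal affine pieces, each carrying a distinct slope, so the set $\Sigma_{low}$ of degenerate slopes is countable; for $v\notin\Sigma_{low}$ the minimizer reduces to a single contact point, which defines $h_c(v)$ unambiguously, with $h_c$ nondecreasing and equal to $\lambda$ (resp.\ $\rho$) for $v$ below (resp.\ above) the slope range of $\hat G$.

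The main step is to verify that $u(x,t):=h_c(x/t)$, for $x/t\notin\Sigma_{low}$, is the entropy solution. In the rarefaction region where $h_c$ varies continuously, $\xi=x/t$ is a supporting slope of $\hat G$ at $h_c(\xi)$, so the characteristic speed equals $x/t$ and $u$ solves \eqref{hydrodynamics} there; at a jump from $u^-$ to $u^+$ with $u^-<u^+$, located at a fixed value $\xi=\sigma$, the jump arises from a maximal affine piece of $\hat G$ of slope $\sigma$ whose endpoints lie on the graph of $G$, so the Rankine--Hugoniot relation $\sigma=(G(u^+)-G(u^-))/(u^+-u^-)$ holds and the chord (which coincides with that affine piece of $\hat G\le G$) lies below the graph, i.e.\ Ole{\u\i}nik's entropy condition recalled before \eqref{hydrodynamics} is satisfied. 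By uniqueness of the entropy solution (\cite{vol}), $R_{\lambda,\rho}(x,t)=h_c(x/t)$ off $\Sigma_{low}$. I expect this verification to be the principal obstacle, since $G$ is only Lipschitz and nonconvex: one must handle the nondifferentiability of $\hat G$, pin down the exact contact structure between $\hat G$ and $G$, and check the weak and entropy formulations across the entire fan rather than at an isolated shock.

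For part ii), assume $\lambda,\rho\in\mathcal R$ and recall that $G$ is affine on each maximal gap of $\mathcal R$, every such gap $(\alpha,\beta)\subset[\lambda,\rho]$ having endpoints $\alpha,\beta\in\mathcal R\cap[\lambda,\rho]$. Since $G(\cdot)-v{\cdot}$ is affine on such a gap, it attains its minimum there at an endpoint, so any minimizer lying in a gap can be replaced by a gap endpoint in $\mathcal R$ without increasing the value; this gives $\min_{[\lambda,\rho]}(G-v{\cdot})=\min_{[\lambda,\rho]\cap\mathcal R}(G-v{\cdot})$. For $v\notin\Sigma_{low}$ the minimizer is unique, hence must already lie in $\mathcal R$, i.e.\ $h_c(v)\in\mathcal R$. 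Finally, by Fubini the set $\{(x,t):t>0,\ x/t\in\Sigma_{low}\}$ is Lebesgue-null, since for each fixed $t$ the corresponding $x$ form a countable set; therefore $R_{\lambda,\rho}(x,t)=h_c(x/t)\in\mathcal R$ for a.e.\ $(x,t)$, which is the asserted a.e.\ $\mathcal R$-valuedness of the Riemann entropy solution.
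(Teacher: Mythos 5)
Your proof is essentially correct, but note that the paper itself contains no proof of Proposition \ref{proposition_1}: it is imported verbatim from \cite[Proposition 4.1]{bgrs2}, so there is no internal argument to compare against. What you have reconstructed is precisely the construction that the notation of the statement alludes to (the subscript in $h_c$ and the name $\Sigma_{low}$ refer to the lower convex envelope), namely the Lax--Ole{\u\i}nik resolution of the Riemann problem: the duality $\min_{[\lambda,\rho]}(G-v\,\cdot)=\min_{[\lambda,\rho]}(\hat G-v\,\cdot)$ with minimizers being contact points, countability of $\Sigma_{low}$ from disjointness of the maximal affine pieces of $\hat G$, Rankine--Hugoniot and the chord condition at jumps, and Vol'pert uniqueness; your part ii) argument (affinity of $G$ on the gaps of $\mathcal R$, plus uniqueness of the minimizer off $\Sigma_{low}$, plus Fubini over the cone $\{x/t\in\Sigma_{low}\}$) is exactly right and is the only place where the specific definition of $G$ on $[0,K]\setminus\mathcal R$ enters. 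The one soft spot is your treatment of the rarefaction fan: the phrase ``the characteristic speed equals $x/t$'' tacitly assumes $G$ is differentiable at $h_c(\xi)$, which a merely Lipschitz $G$ need not be. The clean way to close this, consistent with how the present paper itself uses the result, is the weak envelope identity $\frac{d}{dv}\left[G(h_c(v))-v\,h_c(v)\right]=-h_c(v)$ (quoted in the paper as \cite[(28)]{bgrs2} in the proof of Corollary \ref{corollary_2_2}): integrating the self-similar profile against test functions and using this identity verifies the distributional formulation of \eqref{hydrodynamics} across the whole fan without ever differentiating $G$ pointwise, after which monotonicity of $h_c$ gives the BV regularity needed to invoke uniqueness from \cite{vol}.
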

\begin{remark}
Property ii) holds if we replace $\mathcal R$ by any closed subset
of $[0,K]$ on the complement of which $G$ is affine. We stated it
for $\mathcal R$ because it is the set of densities relevant to the
particle system.
\end{remark}
To state Riemann hydrodynamics, we define a particular initial
distribution for the particle system. We introduce a transformation
$T:{\bf X}^2\to{\bf X}$ by
\[%be\label{transfo_T}
T(\eta,\xi)(x)=\eta(x){\bf 1}_{\{x< 0\}}+\xi(x){\bf 1}_{\{x\geq
0\}} \]%ee
We define $\nu^{\lambda,\rho}$ as the distribution of
$T(\eta^\lambda,\eta^\rho)=:\eta^{\lambda,\rho}$, and
$\bar{\nu}^{\lambda,\rho}$ as the coupling distribution of
$(\eta^\lambda,\eta^\rho)$. Note that, by \eqref{strassen},
\be \label{ordered_coupling} \bar{\nu}^{\lambda,\rho}\left\{
(\eta,\xi)\in{\bf X}^{2}:\,\eta\leq\xi
\right\}=1 \ee
The measure $\nu^{\lambda,\rho}$ is non-explicit unless we are in
the special case of \cite{coc} where the $\nu^\rho$ are product,
one can use \eqref{product_coupling}, and $\nu^{\lambda,\rho}$ is
itself a product measure. In all cases,
$\nu^{\lambda,\rho}$ enjoys the  properties:\\

P1) Negative (nonnegative) sites are distributed as under
$\nu^\lambda$ ($\nu^\rho$);

P2) $\tau_1\nu^{\lambda,\rho}\geq\nu^{\lambda,\rho}$
($\tau_1\nu^{\lambda,\rho}\leq\nu^{\lambda,\rho}$) if
$\lambda\leq\rho$ ($\lambda\geq\rho$);

P3) $\nu^{\lambda,\rho}$ is stochastically increasing with respect
to $\lambda$ and $\rho$.\\ \\
%
%\begin{remark}
%The probability measure $\nu^{\lambda,\rho}$ depends not only on
%$\nu^\lambda$ and $\nu^\rho$, but on the joint distribution of
%$(\eta^\lambda,\eta^\rho)$. For the sequel we do not impose a
%particular choice for this joint distribution.
%\end{remark}
%
Let us also define an extended shift $\theta'$ on the compound
probability space $\Omega'={\bf X}^2\times\Omega$. This is a
particular case of $\widetilde{\Omega}$ when the ``initial''
probability space $\Omega_0$ is the set ${\bf X}^2$ of coupled
particle configurations $(\eta,\xi)$. Let
\be\label{omega'}\omega'=(\eta,\xi,\omega)\ee
denote a generic element of this space. We set
\be\label{extended_shift}\theta'_{x,t}\omega' =
(\tau_x\eta_t(\eta,\omega),\tau_x\eta_t(\xi,\omega),\theta_{x,t}\omega)
\ee
We can now state and prove the main results of this section.
\begin{proposition}
\label{proposition_2_2} Set
\be \label{def_N} {\mathcal N}^{v,w}_t(\omega'):=\sum_{[vt]<x\leq
[wt]}\eta_t(T(\eta,\xi),\omega)(x) \ee
Then, for every $t>0$,  $\alpha\in\R^+,\beta\in\R$ and
$v,w\in\R\setminus\Sigma_{low}$,
\begin{eqnarray} \nonumber &&\lim_{t\to\infty}\frac{1}{t}{\mathcal
N}^{v,w}_t\circ\theta'_{[\beta t],\alpha t}(\omega')\\&&\  =
[G(R_{\lambda,\rho}(v,1))-v\,R_{\lambda,\rho}(v,1)]-[G(R_{\lambda,\rho}(w,1))-w\,R_{\lambda,\rho}(w,1)]
\label{as}\end{eqnarray}
$\bar{\nu}^{\lambda,\rho}\otimes\Prob$-a.s.
\end{proposition}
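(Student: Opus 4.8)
The plan is to establish the almost sure limit for the shifted current $\mathcal{N}^{v,w}_t\circ\theta'_{[\beta t],\alpha t}$ by relating it to the microscopic current and then to the macroscopic flux. Observe first that the quantity $\mathcal{N}^{v,w}_t$ counts the number of particles in the space window $([vt],[wt]]$ at time $t$ for the process started from the Riemann configuration $T(\eta,\xi)$. After applying the space-time shift $\theta'_{[\beta t],\alpha t}$, by \eqref{mapping_markov}, \eqref{mapping_shift} and the definition \eqref{extended_shift} of the extended shift, this becomes the count of particles in a translated window at time $(1+\alpha)t$ for the process started at the shifted initial data. The first step is therefore to rewrite the shifted current via a discrete conservation law: the change in particle number inside a spatial window over a time interval equals the net flux (integrated microscopic current) across its two boundaries. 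I would write $\mathcal{N}^{v,w}_t\circ\theta'_{[\beta t],\alpha t}$ as a difference $Q_{[wt]}-Q_{[vt]}$ of boundary currents, plus boundary terms controlled by the finite-range assumption and the bounded occupation number $K$.

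The second step is to invoke almost sure Riemann hydrodynamics (the convergence \eqref{initial_profile_vague}-\eqref{later_profile} specialized to the Riemann datum $R_{\lambda,\rho}^0$, which must be available earlier in Section \ref{riemann}). Since the empirical measure converges almost surely to the entropy solution $R_{\lambda,\rho}(\cdot,t)$, dividing $\mathcal{N}^{v,w}_t$ by $t$ and passing to the limit yields $\int_v^w R_{\lambda,\rho}(y,1)\,dy$ by the self-similar scaling $R_{\lambda,\rho}(x,t)=R_{\lambda,\rho}(x/t,1)$. The key identity to extract is that the current-flux relation, combined with the fact that $R_{\lambda,\rho}(v,1)=h_c(v)$ for $v\notin\Sigma_{low}$ by Proposition \ref{proposition_1}, produces exactly the Legendre-type expression $[G(R_{\lambda,\rho}(v,1))-v\,R_{\lambda,\rho}(v,1)]$ at each boundary. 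Indeed, the flux through a boundary moving at speed $v$ is $G(u)-vu$ evaluated at the density $u=R_{\lambda,\rho}(v,1)$ seen along the ray $x=vt$, and the choice of $h_c(v)$ as the minimizer of $G(\cdot)-v\cdot$ is precisely what makes the rarefaction-fan current well defined.

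The main obstacle I expect is handling the space-time shift $\theta'_{[\beta t],\alpha t}$ while retaining almost sure (not merely in-probability) convergence. As emphasized in the introduction, the naive composition $Y_n=X_n\circ\theta^n$ loses almost sure convergence because subadditivity is destroyed; here the authors flag that they derive asymptotics for the shifted current by large deviation arguments. Concretely, I would need a large deviation bound for the current $\mathcal{N}^{v,w}_t/t$ around its limit that is uniform enough to survive the shift, so that a Borel-Cantelli argument along $t\in\N$ (plus monotonicity in $t$ from attractiveness and the bounded-occupation control of fluctuations between consecutive integer times) upgrades in-probability convergence of the shifted variable to almost sure convergence. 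The attractiveness \eqref{attractive_1}, \eqref{attractive_2} and the ordered coupling \eqref{strassen} are what let me sandwich the shifted configuration between translates of the equilibrium measures $\nu^\lambda,\nu^\rho$, so that the space-time ergodic theorem of Proposition \ref{prop_ergo} (via Remark \ref{remark_ergo}) can be applied to identify the limit of the integrated current in terms of $G$.

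The remaining step is to verify that the two boundary contributions assemble with the correct signs into the right-hand side of \eqref{as}, and that the limit is independent of the shift parameters $\alpha,\beta$; the shift-invariance of the limit should follow from the self-similar structure of the Riemann solution together with the translation commutation \eqref{mapping_shift}, since shifting the spatial origin by $[\beta t]$ and the time origin by $\alpha t$ only rescales the relevant rays $x/t$ without changing the limiting densities $R_{\lambda,\rho}(v,1)$ and $R_{\lambda,\rho}(w,1)$ at the two boundaries.
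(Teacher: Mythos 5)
There is a genuine gap, and it sits at the heart of your second step. You propose to ``invoke almost sure Riemann hydrodynamics \ldots which must be available earlier in Section \ref{riemann}'' and deduce the limit of ${\mathcal N}^{v,w}_t/t$ from a.s. convergence of the empirical measure to $R_{\lambda,\rho}(\cdot,t)$. But no such result precedes Proposition \ref{proposition_2_2}: in the paper the a.s. Riemann hydrodynamic limit is Corollary \ref{corollary_2_2}, which is \emph{deduced from} Proposition \ref{proposition_2_2} via the identity \eqref{difference_2}, i.e. $[G(u(v,1))-vu(v,1)]-[G(u(w,1))-wu(w,1)]=\int_v^w u(x,1)dx$. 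The window count ${\mathcal N}^{v,w}_t/t$ converging to $\int_v^w R_{\lambda,\rho}(x,1)dx$ is exactly equivalent to the statement being proved, so your argument is circular. The macroscopic remark that the flux across a ray of speed $v$ is $G(u)-vu$ identifies what the answer \emph{should} be, but it cannot substitute for a microscopic proof.

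Your fallback route (exponential/large-deviation bounds surviving the shift, the attractiveness sandwich between $\nu^\lambda$ and $\nu^\rho$, and the space-time ergodic theorem of Proposition \ref{prop_ergo}) is indeed part of the paper's strategy, but by itself it is not enough to identify the limit. That machinery (the martingale bound \eqref{expo_bound}, the Donsker--Varadhan-type bound of Lemma \ref{deviation_empirical} applied to the empirical \emph{process} measure --- not to the current around a deterministic limit --- and Lemma \ref{lemma_empirical}) only shows that subsequential limits of the shifted empirical measure are mixtures $\int\nu^r\gamma(dr)$ with $\gamma$ supported on ${\mathcal R}\cap[\lambda,\rho]$, hence yields the two-sided bounds \eqref{liminf_better}: the $\liminf$ and $\limsup$ of $t^{-1}\phi^v_t\circ\theta'_{[\beta t],\alpha t}$ lie between $\inf_{r}[G(r)-vr]$ and $\sup_{r}[G(r)-vr]$ over $r\in[\lambda,\rho]\cap{\mathcal R}$. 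The missing idea is the paper's entire second step, the proof of \eqref{limsup}: one couples the Riemann data with equilibrium configurations $\eta^r$ at \emph{every} intermediate density $r\in[\lambda,\rho]\cap{\mathcal R}$ (via \eqref{ordered_coupling_trio}), uses the discrepancy/random-walk comparison of Lemma \ref{lemma_2_2_2_1} to show that at speeds beyond $\bar v$ (or below $\underbar{v}$) the Riemann current agrees asymptotically with an equilibrium current, and then exploits attractiveness to conclude $\limsup_t t^{-1}\phi^v_t\circ\theta'_{[\beta t],\alpha t}\leq G(r)-vr$ for every such $r$. Only this upper bound, combined with \eqref{liminf_better} and the variational characterization of Proposition \ref{proposition_1} ($R_{\lambda,\rho}(v,1)=h_c(v)$, the minimizer of $G(\cdot)-v\cdot$), pins the limit to the value in \eqref{as}. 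Without it, your argument cannot exclude that the shifted current oscillates within the interval $[\inf_r(G(r)-vr),\,\sup_r(G(r)-vr)]$.
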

\begin{remark}
This result is an almost sure version of  \cite[Lemma 3.2]{av},
where the limit of the corresponding expectation was derived.
\end{remark}
\begin{corollary}
\label{corollary_2_2} Set
\be \label{def_empirical_shift}
\beta^N_t(\omega')(dx):=\alpha^N(\eta_t(T(\eta,\xi),\omega))(dx)
\ee
(i) For every $t>0$,  $s_0\geq 0$ and $x_0\in\R$, we have the
$\bar{\nu}^{\lambda,\rho}\otimes\Prob$-a.s. convergence
\[%be \label{hydroriemann_shift}
\lim_{N\to\infty}\beta^N_{Nt}(\theta'_{[Nx_0],Ns_0}\omega')(dx)=R_{\lambda,\rho}(.,t)dx
\]%ee
(ii) In particular, for an initial sequence $(\eta_0^N)_N$ such
that $\eta_0^N=\eta^{\lambda,\rho}$ for every  $N\in\N$, the
conclusion of Theorem \ref{th:hydro} holds without the
finite-range assumption on $p(.)$.
\end{corollary}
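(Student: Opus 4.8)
The plan is to read the Corollary off Proposition \ref{proposition_2_2} after two translations: from counts on microscopic intervals to masses of the scaled empirical measure, and from the current-type right-hand side of \eqref{as} to a genuine mass of the Riemann profile. Throughout I use that every $\beta^N_t$ is a positive measure whose density is bounded by $K$, so that the family is vaguely precompact and is determined by its masses on any countable dense family of intervals.

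\emph{Reduction of (i) to interval masses.} Fix $t>0$ and an interval $(a,b]$ with $a/t,\,b/t\notin\Sigma_{low}$. Since $(a/t)(Nt)=aN$, the mass $\beta^N_{Nt}(\theta'_{[Nx_0],Ns_0}\omega')((a,b])$ equals $N^{-1}\mathcal N^{a/t,\,b/t}_{Nt}\circ\theta'_{[Nx_0],Ns_0}(\omega')$ up to an $O(1/N)$ boundary term. Applying Proposition \ref{proposition_2_2} with time parameter $Nt$, $v=a/t$, $w=b/t$, $\alpha=s_0/t$, $\beta=x_0/t$ — and using that the right side of \eqref{as} does not depend on $(\alpha,\beta)$, which is exactly why the shift leaves the limit unchanged — multiplication of \eqref{as} by $t$ gives, $\bar\nu^{\lambda,\rho}\otimes\Prob$-a.s.,
\[
\lim_{N\to\infty}\beta^N_{Nt}(\theta'_{[Nx_0],Ns_0}\omega')((a,b])
= t\,\big\{\,[G(r(a/t))-(a/t)\,r(a/t)]-[G(r(b/t))-(b/t)\,r(b/t)]\,\big\},
\]
where $r(\cdot):=R_{\lambda,\rho}(\cdot,1)$.

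\emph{Identifying the limit as a mass.} It remains a deterministic fact that this equals $\int_a^b R_{\lambda,\rho}(y,t)\,dy$. Because $R_{\lambda,\rho}$ is self-similar, $R_{\lambda,\rho}(y,s)=r(y/s)$, so differentiating $m(s):=\int_{vs}^{ws}R_{\lambda,\rho}(y,s)\,dy$ and using the conservation law \eqref{hydrodynamics} shows $m(s)=s\,\big\{[G(r(v))-v\,r(v)]-[G(r(w))-w\,r(w)]\big\}$; taking $v=a/t$, $w=b/t$, $s=t$ and changing variables gives $\int_a^b R_{\lambda,\rho}(y,t)\,dy$. Now fix a countable set $D$ dense in $\R$ with $D\cap t\,\Sigma_{low}=\emptyset$ and intersect the countably many a.s.\ events over endpoints in $D$: on the resulting full-measure event all these interval masses converge, and since $R_{\lambda,\rho}(\cdot,t)\,dx$ is absolutely continuous, its agreement with the limit on the determining family $D$ forces every vague limit point of $\beta^N_{Nt}(\theta'\ldots)$ to coincide with it. This proves (i).

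\emph{Part (ii) and the uniform-in-time upgrade.} Take $x_0=s_0=0$ and $\eta^N_0\equiv\eta^{\lambda,\rho}$. By Proposition \ref{Rezakhanlou} the marginals $\nu^\lambda,\nu^\rho$ have a.s.\ densities $\lambda,\rho$, so $\alpha^N(\eta^{\lambda,\rho})\to R^0_{\lambda,\rho}\,dx$ a.s.; thus hypothesis \eqref{initial_profile_vague} of Theorem \ref{th:hydro} holds with $u_0=R^0_{\lambda,\rho}$, whose entropy solution is $R_{\lambda,\rho}$, and (i) gives \eqref{later_profile} for each fixed $t$. To obtain the uniform-in-time conclusion of Theorem \ref{th:hydro} I intersect over a countable dense set of times and then invoke a uniform-in-$N$ modulus of continuity in $t$ for $t\mapsto\int\psi\,d\beta^N_{Nt}$, combined with continuity of $t\mapsto\int\psi\,R_{\lambda,\rho}(\cdot,t)\,dx$. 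I expect this last step to be the main obstacle: the time-regularity must be established almost surely and uniformly in $N$ from the bounded rates and the finite first moment (A2) only — not from finite range, which is precisely why the finite-range hypothesis can be dropped in (ii) — so one must control the accumulation of $\omega$-jumps across the support of $\psi$ over short macroscopic time windows.
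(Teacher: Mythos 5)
Your part (i) is correct and is essentially the paper's own proof: the same identity $\beta^N_{Nt}(\theta'_{[Nx_0],Ns_0}\omega')((a,b])=t\,(Nt)^{-1}{\mathcal N}^{a/t,b/t}_{Nt}(\theta'_{[Nx_0],Ns_0}\omega')$, the same application of Proposition \ref{proposition_2_2} with $v=a/t$, $w=b/t$, $\alpha=s_0/t$, $\beta=x_0/t$ (so that the shift at time parameter $Nt$ is exactly $\theta'_{[Nx_0],Ns_0}$), and the same deterministic conversion of the right-hand side of \eqref{as} into $\int_a^b R_{\lambda,\rho}(y,t)\,dy$ (the paper quotes this identity, \eqref{difference_2}, from the weak derivative formula attached to Proposition \ref{proposition_1} rather than rederiving it from the conservation law, but the content is identical). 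Your single-null-set argument (countable dense endpoints avoiding $t\Sigma_{low}$, non-atomicity of the limit, vague precompactness from the density bound $K$) is interchangeable with the paper's, which instead uses that the left side of \eqref{as} is uniformly Lipschitz in $(v,w)$ while the right side is continuous.

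The one divergence is part (ii). The paper's proof is a single line: since $(\eta^\lambda,\eta^\rho)\sim\bar{\nu}^{\lambda,\rho}$ under $\Prob_0$, one has $\beta^N_{Nt}(\omega')=\alpha^N(\eta^N_{Nt})$, and the statement ``follows from (i) with $x_0=s_0=0$''; no uniform-in-time analysis is performed there. So the obstacle you flag is not something the paper resolves: if one reads ``the conclusion of Theorem \ref{th:hydro}'' literally (uniformity on bounded time intervals), your demand for an equicontinuity estimate valid under (A2) alone is legitimate, and it is left implicit both in your write-up and in the paper's terse deduction. It is, however, routine to close, and with the objects you already have rather than with test functions: for $s<t$ in a bounded interval, $N^{-1}\abs{{\mathcal N}^{v,w}_{Nt}-{\mathcal N}^{v,w}_{Ns}}$ is bounded by $K(\abs{v}+\abs{w}+2N^{-1})(t-s)$ (motion of the two interval boundaries, each site carrying at most $K$ particles) plus $N^{-1}$ times the number of $\omega$-events producing a jump across either boundary during $[Ns,Nt]$; the crossing rate per boundary is at most $\norm{b}_\infty\sum_z\abs{z}p(z)<\infty$ by (A2), so this count is dominated by a Poisson variable of mean $CN(t-s)$, and Poisson tail bounds plus Borel--Cantelli over a grid of times give an a.s.\ time modulus uniform in $N$ --- no finite range needed, exactly as you anticipated. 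With that, the fixed-time convergence from (i) at a countable dense set of times upgrades to the uniform statement.
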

For the asymmetric exclusion process, \cite{afs} proved a statement
equivalent to the particular case $\alpha=\beta=0$ of Proposition
\ref{proposition_2_2}. Their argument, which is a correction of
\cite{bf}, is based on subadditivity. As will appear in Section
\ref{general}, we do need to consider nonzero $\alpha$ and $\beta$
in order to prove a.s. hydrodynamics for general (non-Riemann)
Cauchy datum. Using arguments similar to those in \cite{afs}, it
would be possible to prove Proposition \ref{proposition_2_2} with
$\alpha=\beta=0$ for
our  %more general
model \eqref{generator}. However this no longer works for nonzero
$\alpha$ and $\beta$ (see  Appendix \ref{remarks_sub}). Therefore
we construct a new  approach for %proving
a.s. Riemann hydrodynamics, that does not use subadditivity.\\ \\
To prove Proposition \ref{proposition_2_2}, we  first rewrite (in
Subsection \ref{subsubsection_currents}) the quantity ${\mathcal
N}^{v,w}_t$ in terms of particle currents for which we  then state
(in Subsection \ref{statement_lemmas}) a series of lemmas (proven in
Subsection \ref{proofs_lemmas}), and finally obtain the desired
limits for the currents, by deriving upper and lower bounds. For one
bound (the upper bound if $\lambda<\rho$ or lower bound if
$\lambda>\rho$), we  derive an ``almost-sure %version
proof'' inspired by  the ideas of \cite{av} and their extension in
\cite{bgrs2}. For the other bound we use new ideas based on large
deviations of the empirical measure.
% and  an ergodic theorem for the equilibrium process.
%
\subsection{Currents}
\label{subsubsection_currents}
Let us define particle currents in a system $(\eta_t)_{t\ge 0}$
governed by \eqref{update}--\eqref{noupdate}. Let
$x_.=(x_t,\,t\geq 0)$ be a $\Z$-valued {\em cadlag} path with
$\abs{x_t-x_{t^-}}\leq 1$. In the sequel this path will be either
deterministic, or a random path independent of the Poisson measure
$\omega$. We define the particle current as seen by an observer
travelling along this path. We first consider a semi-infinite
system, that is with $\sum_{x>0}\eta_0(x)<+\infty$: in this case, we
set
\be \label{def_current}
\varphi^{x_.}_t(\eta_0,\omega):=\sum_{y>x_t}\eta_t(\eta_0,\omega)
(y)-\sum_{y>x_0}\eta_0(y) \ee
 where $\eta_t(\eta_0,\omega)$ is the mapping
introduced in \eqref{unique_mapping}. In the sequel we shall most
often omit $\omega$ and write $\eta_t$ for $\eta_t(\eta_0,\omega)$
when this creates no ambiguity.
%
%Equation \eqref{def_current} defines the rightward current seen
%from an ``observer'' starting at $0$ and travelling with velocity
%$v$ on the lattice.
%
%
We have
\be \label{current_3} \varphi^{x_.}_t(\eta_0,\omega)
=\varphi^{x_.,+}_t(\eta_0,\omega)
-\varphi^{x_.,-}_t(\eta_0,\omega)+\tilde{\varphi}^{x_.}_t(\eta_0,\omega)
\ee
where
\be \label{plus_minus_current} \ba{lcl}
\varphi^{x_.,+}_t(\eta_0,\omega) &=& \omega \left\{(s,y,z,u):\,
0\leq s\leq t,\, y\leq
x_{s}<y+z,\phantom{\displaystyle\frac{(\eta_{s^-}}{||b||_\infty}}
\right.\\&&\left.\hskip 3cm\displaystyle
u\leq\frac{b(\eta_{s^-}(y),\eta_{s^-}(y+z))}{||b||_\infty}
\right\}\\
\displaystyle \varphi^{x_.,-}_t(\eta_0,\omega) &=& \omega \left\{
(s,y,z,u):\, 0\leq s\leq t,\, y+z\leq
x_{s}<y,\phantom{\displaystyle\frac{(\eta_{s^-}}{||b||_\infty}}
\right.\\&&\left.\hskip 3cm\displaystyle
u\leq\frac{b(\eta_{s^-}(y),\eta_{s^-}(y+z))}{||b||_\infty}
\right\} \ea \ee
%
%are Poisson processes with stochastic intensities
%
%\be
%\label{intensities}
%\ba{l}
%\displaystyle
%\lambda^+_v(t)=\sum_{y,z:\,y\leq x+[vt^-]<y+z}p(z)b(\eta_{t^-}(y),\eta_{t^-}(y+z))\\
%\displaystyle
%\lambda^-_v(t)=\sum_{y,z:\,y+z\leq x+[vt^-]<y}p(z)b(\eta_{t^-}(y),\eta_{t^-}(y+z))
%\ea
%\ee
%
count the number of rightward/leftward crossings due to particle
jumps, and
\be \label{self_motion}
\tilde{\varphi}^{x_.}_t(\eta_0,\omega)=-\int_{[0,t]}\eta_{s^-}(x_s\vee
x_{s^-})dx_s\ee
is the current due to the self-motion of the observer.
%
%Note that
%\eqref{current_3} holds $\widetilde{\Prob}$-a.s. because, as
%$\omega$ is Poisson distributed, there is a.s. no interaction
%between the random times of the jump processes
%$\varphi^{v,\pm}_t(\eta_0,\omega)$, which are taken from a Poisson
%process, and the deterministic times of the jump process
%$\tilde{\varphi}^v_t(\eta_0,\omega)$.
%
For an infinite system, we may still {\em define}
$\varphi^{x_.}_t(\eta_0,\omega)$ by equations \eqref{current_3} to
\eqref{self_motion}. We shall use the notation $\varphi^v_t$ in
the particular case %where
 $x_t=[vt]$.
%
%We can similarly define
%a leftward current $\bar{\varphi}^{v}_t(\eta_0,\omega)$ as
%
%\be \label{def_lcurrent}
%\bar{\varphi}^{v}_t(\eta_0,\omega):=\sum_{y
%\leq[vt]}\eta_t(\eta_0,\omega)(y)-\sum_{y \leq 0}\eta_0(y) \ee
%
%for finite systems. We point out that
%$\bar{\varphi}^{v}_t(\eta_0,\omega)=-\varphi^{v}_t(\eta_0,\omega)$.
%For infinite systems we define the leftward current as
%
%\be \label{current_l3}
%\bar{\varphi}^{v}_t(\eta_0,\omega)=-\varphi^{v}_t(\eta_0,\omega)
%+\varphi^{v}_t(\eta_0,\omega)-\tilde{\varphi}^{v}_t(\eta_0,\omega)
%\ee
%
%
%\begin{remark}\label{re:21=22}
%If equation \eqref{def_current} (resp. \eqref{def_lcurrent}) is
%well defined for all $t\ge 0$ for a system with an infinite number
%of particles, we can use it instead of \eqref{current_3} (resp.
%\eqref{current_l3}). What is also relevant is that the identity
%
The following identities are immediate from \eqref{def_current} in
the semi-infinite case, and extend to the infinite case:
\beq \label{comparison_12}
\abs{\varphi^{x_.}_t(\eta_0,\omega)-\varphi^{y_.}_t(\eta_0,\omega)}&\leq&
K\left(\abs{x_t-y_t}+ \abs{x_0-y_0}\right)\\
%\be
\label{difference} \sum_{x=1+[vt]}^{[wt]}\eta_t(\eta_0,\omega)(x)
&=&\varphi^{v}_t(\eta_0,\omega)-\varphi^{w}_t(\eta_0,\omega) \eeq
%
%
%\end{remark}
%
Following \eqref{difference}, the quantity
 ${\mathcal N}_t^{v,w}(\omega')$ defined in
\eqref{def_N} can be written as
\be \label{difference_phi} {\mathcal
N}_t^{v,w}(\omega')=\phi^v_t(\omega')-\phi^w_t(\omega') \ee
where we define the current
\[%be \label{def_phi}
\phi^{v}_t(\omega'):=\varphi^{v}_t(T(\eta,\xi),\omega)\]%ee
%
%
%and the analogous leftward current
%
%\be \label{def_phi_left}
%\bar{\phi}_t^v(\omega'):=-\phi^v_t(\omega')\ee
%
%with the notation $\phi^v_t$ in the case $x_t=[vt]$.
%
Notice that
\[%be \label{phivarphi}
\phi^{v}_t(\eta,\eta,\omega)= \varphi^{v}_t(\eta,\omega)\]%ee
The proof of the existence of the limit in Proposition
\ref{proposition_2_2} is thus reduced to
\be \label{lim_current}
\lim_{t\to\infty}\,t^{-1}\phi^{v}_t\left(\theta'_{[\beta t],\alpha
t}\omega'\right)\quad\mbox{exists }
\bar{\nu}^{\lambda,\rho}\otimes\Prob\mbox{-a.s.} \ee
\subsection{Lemmas}
\label{statement_lemmas}
We fix $\alpha\in\R^+$ and $\beta\in\R$. The first lemma deals with
equilibrium processes.
\begin{lemma}\label{lemma_2_2_2} For all $r\in[\lambda,\rho]\cap{\mathcal R}$,
$\varsigma\in\mathbf{X}$,   $v\in\R\setminus\Sigma_{low}$,
\[%be
\lim_{t\to\infty}\,t^{-1}\phi^{v}_t\circ\theta'_{[\beta t],\alpha
t}(\varsigma,\varsigma,\omega)= G(r)-vr,\quad
\bar{\nu}^{r,r}\otimes\Prob\mbox{-a.s.} \]%ee
\end{lemma}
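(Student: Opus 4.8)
The plan is to strip off the space--time shift by the cocycle identities, recognize that $\nu^{r}$ is an equilibrium for both the dynamics and the spatial translations, and thereby reduce the whole statement to a concentration estimate for a \emph{single} unshifted equilibrium current. Under $\bar\nu^{r,r}\otimes\Prob$ the two configuration coordinates coincide, so $\varsigma\sim\nu^{r}$, and since $T(\varsigma,\varsigma)=\varsigma$ the definition \eqref{extended_shift} of $\theta'$ gives at once
\[
\phi^{v}_{t}\circ\theta'_{[\beta t],\alpha t}(\varsigma,\varsigma,\omega)=\varphi^{v}_{t}(\varsigma',\omega''),\qquad \varsigma':=\tau_{[\beta t]}\eta_{\alpha t}(\varsigma,\omega),\ \ \omega'':=\theta_{[\beta t],\alpha t}\omega .
\]
Combining the shift commutation \eqref{mapping_shift} with the Markov identity \eqref{mapping_markov} shows moreover that $\eta_{s}(\varsigma',\omega'')=\tau_{[\beta t]}\eta_{\alpha t+s}(\varsigma,\omega)$, so the trajectory feeding the current is just the original equilibrium trajectory translated by $([\beta t],\alpha t)$.

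The key structural point is a distributional invariance: for each fixed $t$ the pair $(\varsigma',\omega'')$ has law $\nu^{r}\otimes\Prob$. Indeed $\eta_{\alpha t}(\varsigma,\omega)\sim\nu^{r}S(\alpha t)=\nu^{r}$ since $\nu^{r}\in\mathcal I$ by Proposition \ref{invariant}, and $\tau_{[\beta t]}$ preserves $\nu^{r}$ since $\nu^{r}\in\mathcal S$; the Poisson intensity $m$ is invariant under space--time translation, so $\omega''\sim\Prob$; and because the construction \eqref{update}--\eqref{noupdate} is causal, $\varsigma'$ is built from $\omega$ on $[0,\alpha t]$ while $\omega''$ is built from $\omega$ on $[\alpha t,\infty)$, so the two are independent by the independence of the Poisson increments. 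Consequently $\varphi^{v}_{t}(\varsigma',\omega'')$ has, \emph{for every fixed $t$}, the same law as the unshifted equilibrium current $\varphi^{v}_{t}(\varsigma,\omega)$ with $\varsigma\sim\nu^{r}$.

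Next I would identify the limit value. Decomposing the current as in \eqref{current_3} into the net crossing part $\varphi^{v,+}_{t}-\varphi^{v,-}_{t}$ and the self-motion part $\tilde\varphi^{v}_{t}$ of \eqref{self_motion}, and writing the crossing part as a compensated Poisson integral, the compensator is a time integral of the net rightward jump-rate across the moving site $[vs]$, a local function whose $\nu^{r}$-mean is the macroscopic flux $G(r)$ by \eqref{flux}; the self-motion part averages the occupation along the observer and has $\nu^{r}$-mean $-vr$ via the mean density of $\nu^{r}$ (Proposition \ref{Rezakhanlou}). Thus $t^{-1}\Exp_{\nu^{r}}[\varphi^{v}_{t}]\to G(r)-vr$, recovering in expectation \cite[Lemma 3.2]{av} and fixing the target constant.

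The hard part is to upgrade this to almost sure convergence of the \emph{shifted} current, because $\theta'_{[\beta t],\alpha t}$ depends on $t$: almost sure convergence of $t^{-1}\varphi^{v}_{t}(\varsigma,\omega)$ does not transfer to $t^{-1}\varphi^{v}_{t}(\varsigma',\omega'')$, this being exactly the $Y_{n}=X_{n}\circ\theta^{n}$ obstruction described in the Introduction, where the subadditive/ergodic route breaks down. I would therefore prove a \emph{summable} large deviation estimate $\Prob_{\nu^{r}}\!\big(|t^{-1}\varphi^{v}_{t}-(G(r)-vr)|>\eps\big)\le\psi(t)$, controlling the martingale fluctuations by Poisson concentration and the compensator and self-motion fluctuations by large deviations of the empirical density of $\nu^{r}$. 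By the distributional identity of the second paragraph the same bound holds for $t^{-1}\varphi^{v}_{t}(\varsigma',\omega'')$ at every $t$, so Borel--Cantelli along $t\in\N$ gives almost sure convergence along integers; passage to real $t$ follows from the exponential rate (a union bound over a polynomially fine grid) together with \eqref{comparison_12}, which controls the increments of the current over short, slowly drifting windows. The main obstacle is precisely this concentration estimate: it must be both summable and insensitive to the moving space--time window, which is where the large deviation input replaces the subadditivity used in earlier works.
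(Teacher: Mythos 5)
Your reduction of the statement is exactly the paper's: you strip the space--time shift via \eqref{mapping_markov}--\eqref{mapping_shift}, observe that for each fixed $t$ the pair $(\varsigma',\omega'')=(\tau_{[\beta t]}\eta_{\alpha t}(\varsigma,\omega),\theta_{[\beta t],\alpha t}\omega)$ has law $\nu^{r}\otimes\Prob$ with the two factors independent (this is \eqref{hence_varpi}), and you then invoke the transfer principle from the Introduction: fixed-$t$ deviation bounds, unlike a.s.\ limits, survive the $t$-dependent shift, so a summable concentration estimate plus Borel--Cantelli finishes the job. The martingale part of your decomposition is also handled as in the paper (exponential martingale, Cram\'er, Borel--Cantelli). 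The gap is in the one clause where you dispose of the remaining term: ``controlling the compensator and self-motion fluctuations by large deviations of the empirical density of $\nu^{r}$.'' No such tool exists in this setting, and this is precisely the hard core of the lemma. The measure $\nu^{r}$ is non-explicit; all that is known is that it is extremal in ${\mathcal I}\cap{\mathcal S}$, and Proposition \ref{Rezakhanlou} gives only an almost sure law of large numbers for its density, with no rate. More fundamentally, the compensator is a \emph{time} integral $\int_0^t[f(\tau_{x_s}\eta_s)-v\eta_s(x_s+\delta)]\,ds$ along the trajectory: the fact that $\eta_s\sim\nu^{r}$ at each fixed $s$ fixes its mean but says nothing about fluctuations of the time average. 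Extremality of $\nu^{r}$ in ${\mathcal I}\cap{\mathcal S}$ guarantees ergodicity only under the \emph{joint} space--time action (this is the content of Proposition \ref{prop_ergo}), not under time shifts alone, so without spatial averaging it is not even clear that $t^{-1}\int_0^t f(\tau_{[vs]}\eta_s)\,ds$ converges a.s.\ to $G(r)-vr$ --- there is no law of large numbers to quantify, let alone with summable deviation probabilities.

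This is why the paper's proof (Lemma \ref{lemma_2_2_2} is obtained as the case $\lambda=\rho=r$ of the squeeze \eqref{liminf_better}) carries extra machinery that your plan omits: the deterministic path $[vt]$ is replaced by a Poisson path so that the observed process is Markov with homogeneous generator $L_v=L+S_v$; the current is spatially averaged over a window $[-\eps t,\eps t]$, with \eqref{comparison_12} controlling the $O(\eps t)$ error, so that the compensator becomes an integral against the \emph{space--time} empirical measure $m_{t,\eps}$; a Donsker--Varadhan-type upper bound (Lemma \ref{deviation_empirical}) then gives summable decay of the probability that $m_{t,\eps}$ escapes a neighborhood of ${\mathcal I}_v$. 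Even this is not enough by itself, since the rate functional ${\mathcal D}_v$ vanishes on \emph{all} of ${\mathcal I}_v$, not just at $\nu^{r}$: the identification of the limit as $\nu^{r}$ requires the shift-invariance of subsequential limits (forced by the spatial averaging) and the sandwich argument of Lemma \ref{lemma_empirical}, which rests on the space--time ergodic theorem applied to drifting windows, i.e.\ \eqref{ergo2}. In short, your skeleton (distributional transfer plus Borel--Cantelli) is the paper's, but the concentration estimate you assume cannot be produced from static properties of $\nu^{r}$; it is exactly what the spatial averaging, the Donsker--Varadhan bound, and Proposition \ref{prop_ergo} are there to supply.
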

The second lemma relates the current of the process under study with
equilibrium currents; here it plays the role of Lemma 3.3 in
\cite{av}. It is connected to the ``finite propagation property'' of
the particle model (see
%Lemma 5.2 of \cite{bgrs2},  quoted here as
Lemma \ref{finite_prop}):
\begin{lemma}
\label{lemma_2_2_2_1} There exist $\bar{v}$ and $\underbar{v}$
(depending on $b$ and $p$) such that  we have,
$\bar{\nu}^{\lambda,\rho}\otimes{\Prob}\mbox{-a.s.}$,
\be \label{requilibrium_current}
\lim_{t\to\infty}\left[t^{-1}\phi^v_t\circ\theta'_{[\beta
t],\alpha t}(\eta,\xi,\omega)- t^{-1}\phi^v_t\circ\theta'_{[\beta
t],\alpha t}(\xi,\xi,\omega)\right]=0, \ee
for all $v> \bar{v}$, and
\be \label{lequilibrium_current}
\lim_{t\to\infty}\left[t^{-1}\phi^v_t\circ\theta'_{[\beta
t],\alpha t}(\eta,\xi,\omega)- t^{-1}\phi^v_t\circ\theta'_{[\beta
t],\alpha t}(\eta,\eta,\omega)\right]=0,\ee
for all $v <\underbar{v}$.
\end{lemma}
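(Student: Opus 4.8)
The plan is to prove Lemma \ref{lemma_2_2_2_1} by exploiting attractiveness together with the finite propagation property, which is precisely what will let me replace the mixed initial datum $T(\eta,\xi)$ by the pure equilibrium data $\xi$ (far to the right) or $\eta$ (far to the left) at the level of the shifted current. The key observation is that $\eta\le T(\eta,\xi)\le\xi$ by \eqref{ordered_coupling}, since $T(\eta,\xi)$ agrees with $\eta$ on negative sites and with $\xi$ on nonnegative sites while $\eta\le\xi$ everywhere. By the monotonicity \eqref{attractive_1} of the graphical mapping, this ordering is preserved by the dynamics for \emph{every} fixed realization $\omega$, and it persists after applying the space-time shift $\theta'_{[\beta t],\alpha t}$, which acts on the configuration components exactly by evolving and translating them (see \eqref{extended_shift}). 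Thus for each $v$ the three currents $\phi^v_t\circ\theta'(\eta,\xi,\omega)$, $\phi^v_t\circ\theta'(\xi,\xi,\omega)$ and $\phi^v_t\circ\theta'(\eta,\eta,\omega)$ are themselves ordered, and the differences appearing in \eqref{requilibrium_current}--\eqref{lequilibrium_current} have a definite sign.

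First I would make the finite propagation property (Lemma \ref{finite_prop}, invoked in the statement) quantitative: because the jump kernel has a Lipschitz constant $V$ controlling macroscopic speed, the configurations built from $T(\eta,\xi)$ and from $\xi$ can only differ, up to time $t$, in a region whose boundary moves no faster than some deterministic speed, call it $\bar v$, to the right of the origin. Concretely, I expect that the three evolutions $T(\eta,\xi)$, $\eta$ and $\xi$, coupled through the same $\omega$, agree on $\{x>\bar v\,t\}$ with overwhelming probability, since the discrepancy between $T(\eta,\xi)$ and $\xi$ is initially supported on the negative half-line and its influence propagates rightward at bounded speed. A current $\phi^v_t$ is read off at the macroscopic location $v$ (recall $x_t=[vt]$ in the definition of $\varphi^v_t$), so for $v>\bar v$ the current counts crossings strictly to the right of the discrepancy region, where the two systems coincide; hence the two currents agree. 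After applying $\theta'_{[\beta t],\alpha t}$, the initial configurations are replaced by their time-$\alpha t$, space-$[\beta t]$ translates of the evolved profiles, but the same ordering and finite-speed argument applies to the shifted systems, with $\bar v$ adjusted to absorb the fixed drift $\beta$ and the time-scaling factor coming from $\alpha$.

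The core estimate I would then set up is a deviation bound: I want to show that $\bar\nu^{\lambda,\rho}\otimes\Prob$-a.s. the rightward discrepancy front between the system started from $T(\eta,\xi)$ and the one started from $\xi$ stays (eventually) to the left of $[vt]$ for $v>\bar v$. This reduces to controlling, by \eqref{comparison_12} and \eqref{difference}, the number of sites in $\{[vt]<x\}$ at which the two coupled configurations disagree; by attractiveness this equals the integrated discrepancy $\sum_{x>[vt]}(\xi_t-(T(\eta,\xi))_t)(x)\ge 0$, and I would bound its expected growth using the bounded occupation number $K$ together with the finite-range, bounded-rate structure of $\omega$. A Borel--Cantelli argument along a subsequence of times $t_n$, combined with the deterministic monotonicity in $t$ of the discrepancy (again from attractiveness) to fill the gaps between $t_n$ and $t_{n+1}$, upgrades convergence in probability to the almost sure statement. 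The symmetric case $v<\underbar v$, comparing $T(\eta,\xi)$ with $\eta$, is handled identically with the discrepancy now propagating leftward from the origin.

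The main obstacle will be the shift $\theta'_{[\beta t],\alpha t}$ with $\alpha,\beta\neq 0$: the whole point of the paper (as stressed in the discussion after Corollary \ref{corollary_2_2} and in Appendix \ref{remarks_sub}) is that shifting by a $t$-dependent space-time offset destroys subadditivity and the clean stationarity one would like. Here, however, I expect the difficulty to be milder than for the full current asymptotics, because I am only proving that a \emph{difference} of currents vanishes, and the sign provided by attractiveness means I only need a one-sided (upper) bound on a nonnegative discrepancy rather than a two-sided convergence. The delicate point is that after shifting by $\alpha t$ in time, the relevant propagation happens over a time window comparable to $t$ starting from a profile that is itself the time-$\alpha t$ evolution of an equilibrium-type configuration; I would control this by choosing $\bar v$ strictly larger than the maximal macroscopic speed $V$ so that, even accounting for the extra displacement $[\beta t]$ and the $(1+\alpha)$-fold total time horizon, the discrepancy front provably cannot reach the observation line $[vt]$. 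Making this margin explicit in terms of $V$, $\alpha$ and $\beta$, and verifying that the elementary moment bound on the discrepancy survives the shift, is where the bookkeeping lies.
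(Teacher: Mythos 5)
Your opening moves coincide with the paper's: attractiveness gives the ordering $\eta\le T(\eta,\xi)\le\xi$, hence a signed, nonnegative discrepancy field $\gamma_t=\eta_t(\xi,\omega)-\eta_t(T(\eta,\xi),\omega)$, and the current difference in \eqref{requilibrium_current} equals the discrepancy mass to the right of $[vt]$, because $T$ is applied at the (shifted) time origin so the initial discrepancies sit entirely on negative sites. The divergence is in the key estimate, and there your route has a genuine gap: you propose to show, via the finite propagation property (Lemma \ref{finite_prop}) with exponential probability bounds, that the coupled systems \emph{coincide} beyond a linear front $\bar v t$ with overwhelming probability. That statement requires $p(.)$ to be finite range (or to have exponential moments), but Lemma \ref{lemma_2_2_2_1} belongs to the Riemann part (Section 3), whose standing hypothesis on $p$ is only the finite first moment (A2); indeed Corollary \ref{corollary_2_2}(ii) explicitly asserts Riemann hydrodynamics \emph{without} the finite-range assumption, and Section 4 reiterates that finite range is added only for the Cauchy problem. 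Under (A2) alone your central claim is false: if, say, $p(z)\sim z^{-2-\eps}$ with $0<\eps<1$, the expected number of discrepancies that jump across any linear front by time $t$ is of order $t^{1-\eps}\to\infty$, so one cannot get ``zero discrepancies beyond $\bar v t$'' with high probability; one can only bound the \emph{count} of escaped discrepancies by $o(t)$, which is exactly what the lemma needs and exactly what the paper proves.

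The paper's mechanism for that $o(t)$ bound is different from yours: it introduces an auxiliary field $\chi$ of Poisson($K(1+\eps)$)-distributed particles, labels the discrepancies $R^j$ and the $\chi$-particles $Z^j$, couples them so that $R^j_0\le Z^j_0$ (for all but a finite random number $W(\chi)$ of labels) is preserved in time, with the $Z^j$ evolving as independent rightward random walks of rate $\overline{p}(z)=(p(z)+p(-z))||b||_\infty$. Choosing $\bar v>\sum_{z>0}z\overline{p}(z)$ (finite under (A2)), the count $\bar Z_t$ of walks beyond $\bar v t$ is Poisson with mean $o(t)$, and a fourth-moment Chebyshev bound plus Borel--Cantelli gives $\bar Z_t/t\to 0$ a.s.; since these are quantitative, per-$t$ estimates on quantities whose law is invariant under $\theta_{[\beta t],\alpha t}$ and independent of the (shifted) initial data by \eqref{hence_varpi}, they transfer verbatim to the shifted current — this is precisely the ``deviation estimates survive shifts, subadditivity does not'' philosophy you correctly identify. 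Two smaller points: your suggestion that $\bar v$ must be enlarged ``to absorb $\beta$ and the $(1+\alpha)$-fold time horizon'' reflects a misreading of \eqref{extended_shift} — the shifted current is read in shifted coordinates and the discrepancies are created at the shifted origin, so no adjustment is needed, only the independence and uniformity just mentioned; and the ``deterministic monotonicity in $t$ of the discrepancy'' you invoke to fill Borel--Cantelli gaps is not available (the discrepancy mass beyond a moving front is not monotone), so even in the finite-range case that interpolation needs a different argument. With finite range assumed, your percolation-of-influence approach can be made to work and is arguably simpler, but it proves a strictly weaker lemma than the one stated.
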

For the next lemmas we need some more notation and definitions.
Let  $v\in\R$. We consider a probability space $\Omega^+$, whose
generic element is denoted by $\omega^+$, on which is defined a
Poisson process $N_t=N_t(\omega^+)$ with intensity $\abs{v}$. We
denote by ${\Prob}^+$ the associated probability. We set
\beq\label{def_poisson}
x_s^t(\omega^+)&:=&({\rm sgn}(v))\left[N_{\alpha t+s}(\omega^+)-N_{\alpha t}(\omega^+)\right]\\
\label{mapping_tilde}\widetilde{\eta}^t_s(\eta_0,\omega,\omega^+)
&:=&\tau_{x_s^t(\omega^+)}\eta_s(\eta_0,\omega) \eeq
Thus $(\widetilde{\eta}_s^t)_{s\ge 0}$  is a Feller process
with generator
\be \label{new_gen} L_v=L+S_v,\quad S_v f(\zeta)=\abs{v}
[f(\tau_{{\rm sgn}(v)}\zeta)-f(\zeta)] \ee
(for $f$ local and $\zeta\in\bf X$) for which the set of local
functions is a core, as it is known to
be (\cite{lig1}) for  $L$. %$v=0$.
We denote by  ${\mathcal I}_v$ the set of invariant measures for
$L_v$. Since any translation invariant measure on $\bf X$ is
stationary for the pure shift generator $S_v$,  we have
\be\label{samesets}{\mathcal I}\cap{\mathcal S}={\mathcal
I}_v\cap{\mathcal S}\ee
It can be shown
%which involves displacements independent of those described in %$L$, we have
(see \cite[Theorem 3.1]{f1}  and \cite[Corollary 9.6]{f2}) that
${\mathcal I}_v\subset {\mathcal S}$ for $v\neq 0$, which implies
$${\mathcal I}\cap{\mathcal S}={\mathcal
I}_v\cap{\mathcal S}={\mathcal I}_v $$
but we shall not use this fact here. Define the time empirical
measure
\be \label{def_empirical} m_{t}(\omega',\omega^+):=t^{-1}\int_0^t
\delta_{\widetilde{\eta}^t_s(T(\eta,\xi),\omega,\omega^+)}ds \ee
and  space-time empirical  measure (where $\eps>0$) by
\be \label{def_empirical_2}
m_{t,\eps}(\omega',\omega^+):=\abs{\Z\cap[-\eps t,\eps
t]}^{-1}\sum_{x\in\Z:\,\abs{x}\leq \eps
t}\tau_xm_{t}(\omega',\omega^+) \ee
We introduce the set
\[%be \label{def_set_m}
{\mathcal M}_{\lambda,\rho}:=\left\{ \mu\in
{\mathcal P}({\bf X}):\,\nu^\lambda\leq\mu\leq\nu^\rho \right\}
\]%ee
Notice that this is a closed (thus compact) subset of the compact
space ${\mathcal P}({\bf X})$.
\begin{lemma}\label{lemma_empirical}
(i) With
$\bar{\nu}^{\lambda,\rho}\otimes{\Prob}\otimes{\Prob}^+$-probability
one, every subsequential limit of   $m_{t,\eps }(\theta'_{[\beta
t],\alpha t}\omega',\omega^+)$
as $t\to\infty$ lies in   ${\mathcal I}_v\cap{\mathcal S}
\cap{\mathcal M}_{\lambda,\rho}={\mathcal I}\cap{\mathcal
S}\cap{\mathcal M}_{\lambda,\rho}$.\\
(ii) ${\mathcal I}\cap{\mathcal S}\cap{\mathcal M}_{\lambda,\rho}$
is the set of probability measures $\nu$ of the form
$\nu=\int\nu^r\gamma(dr)$,  where $\gamma$ is a probability measure
supported on ${\mathcal R}\cap[\lambda,\rho]$.
\end{lemma}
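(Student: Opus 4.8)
The plan is to prove the algebraic characterization (ii) first, and then use it together with three separate qualitative properties to establish the dynamical statement (i).

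\emph{Part (ii).} Since ${\mathcal I}\cap{\mathcal S}$ is a compact convex subset of ${\mathcal P}({\bf X})$ whose extreme points are exactly $\{\nu^r:r\in{\mathcal R}\}$ by \eqref{characterize_2}, the ergodic (Choquet) decomposition gives, for any $\nu\in{\mathcal I}\cap{\mathcal S}$, a representation $\nu=\int\nu^r\gamma(dr)$ with $\gamma$ a probability measure on ${\mathcal R}$ (the map $r\mapsto\nu^r$ being a homeomorphism onto the extreme points, as $r\mapsto\nu^r[\eta(0)]=r$ is continuous and injective). It then remains to show that the constraint $\nu\in{\mathcal M}_{\lambda,\rho}$ is equivalent to $\gamma$ being carried by $[\lambda,\rho]$. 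For the inclusion ``$\supseteq$'', if $\gamma$ is supported on ${\mathcal R}\cap[\lambda,\rho]$ then integrating the stochastic ordering \eqref{ordered_measures} against $\gamma$, tested against nondecreasing nonnegative $f$, yields $\nu^\lambda\leq\nu\leq\nu^\rho$. For ``$\subseteq$'', I invoke Proposition \ref{Rezakhanlou}: the empirical density $D(\eta):=\lim_{l}(2l+1)^{-1}\sum_{x=-l}^{l}\eta(x)$ exists $\nu$-a.s. and its law under $\nu=\int\nu^r\gamma(dr)$ is precisely $\gamma$. A monotone coupling realizing $\nu\leq\nu^\rho$ forces $D\leq\rho$ a.s., hence $\gamma((\rho,K])=0$; symmetrically $\nu\geq\nu^\lambda$ gives $\gamma([0,\lambda))=0$, so $\gamma$ is supported on ${\mathcal R}\cap[\lambda,\rho]$.

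\emph{Part (i).} Fix a countable family of local functions rich enough to characterize both shift-invariance and membership in ${\mathcal I}_v$ (local functions being a core for $L_v$); intersecting the resulting full-measure events, it suffices to control the relevant functionals of $m_{t,\eps}(\theta'_{[\beta t],\alpha t}\omega',\omega^+)$ for each such $f$, and to use that the three defining conditions of the target set are closed. I would establish the three properties as follows. \emph{Shift-invariance:} for fixed $h$, $(\tau_h-\mathrm{id})m_{t,\eps}$ is a boundary sum of $O(|h|)$ bounded terms divided by $|\Z\cap[-\eps t,\eps t]|\sim 2\eps t$, hence $O(1/t)$ deterministically, so every limit lies in ${\mathcal S}$. \emph{Invariance for $L_v$:} writing $m_t$ as a time average of the Feller process $\widetilde\eta^t_s$ of generator $L_v$, Dynkin's formula gives $m_t(L_v g)=t^{-1}[g(\widetilde\eta^t_t)-g(\widetilde\eta^t_0)-M_t]$ with $M_t$ a martingale of bracket $O(t)$; the boundary terms are $O(1/t)$ and $\Exp[(t^{-1}M_t)^2]=O(1/t)$, so after spatial averaging and the translation invariance $L_v(\tau_x f)=\tau_x(L_v f)$ (Jensen's inequality controlling the average of the uniform per-term $L^2$ bounds), one gets $m_{t,\eps}(L_v f)\to 0$ in $L^2$; any subsequential a.s. limit $\mu$ thus satisfies $\mu(L_v f)=0$. \emph{Membership in ${\mathcal M}_{\lambda,\rho}$:} here attractiveness enters. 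Since $\eta\leq T(\eta,\xi)\leq\xi$ with $\eta\sim\nu^\lambda$, $\xi\sim\nu^\rho$, and the extended shift \eqref{extended_shift} preserves this order, I couple the process started from $T(\eta,\xi)$ with the two processes started from the shifted bottom and top configurations $\tau_{[\beta t]}\eta_{\alpha t}(\eta,\omega)$ and $\tau_{[\beta t]}\eta_{\alpha t}(\xi,\omega)$, all driven by the same $\omega$ and the same observer $\omega^+$; attractiveness \eqref{attractive_1} yields the sandwich of the empirical measures tested against nondecreasing $f$.

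The main obstacle, and the reason the argument must be carried out with care, is exactly the phenomenon flagged in the Introduction: the space-time shift $\theta'_{[\beta t],\alpha t}$ destroys almost-sure convergence. The shifted bottom and top configurations are distributed as $\nu^\lambda,\nu^\rho$ and the shifted environment $\theta_{[\beta t],\alpha t}\omega$ is again Poisson and independent of them, so for each fixed $t$ the two bounding processes are genuine equilibrium processes and, by \eqref{samesets}, $\nu^\lambda,\nu^\rho\in{\mathcal I}_v$; but the family is reindexed by $t$, so Proposition \ref{prop_ergo}, which gives a.s. convergence of the space-time averages of a single equilibrium trajectory to $\nu^\rho(f)$ (resp. $\nu^\lambda(f)$), transfers to the shifted versions only as convergence \emph{in probability}, via equality of one-dimensional distributions. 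This suffices: given a subsequence along which $m_{t,\eps}(\theta'_{[\beta t],\alpha t}\omega',\omega^+)\to\mu$ a.s., I extract a further subsequence along which the two equilibrium bounds converge a.s. to $\nu^\rho(f)$ and $\nu^\lambda(f)$, and pass to the limit in the sandwich to obtain $\nu^\lambda(f)\leq\mu(f)\leq\nu^\rho(f)$ for every nondecreasing $f$, i.e. $\nu^\lambda\leq\mu\leq\nu^\rho$. Together with the first two properties and \eqref{samesets}, this places every subsequential limit in ${\mathcal I}_v\cap{\mathcal S}\cap{\mathcal M}_{\lambda,\rho}={\mathcal I}\cap{\mathcal S}\cap{\mathcal M}_{\lambda,\rho}$.
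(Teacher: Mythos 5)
Your part (ii) is correct and is essentially the paper's own argument: the extremal decomposition from Proposition \ref{invariant} plus Proposition \ref{Rezakhanlou}, with the support restriction phrased via a Strassen coupling rather than (as in the paper) a contradiction using the monotone density functional $M$; these are the same idea. The problems are in part (i), and they sit exactly at the point this paper is about: upgrading in-probability information to the statement that, on a single full-measure event, \emph{every} subsequential limit, along sample-point-dependent sequences of times, lies in the target set. For the ${\mathcal M}_{\lambda,\rho}$ step your patch is invalid. Knowing only that the bounding equilibrium quantities converge in probability, you fix ``a subsequence along which $m_{t,\eps}\to\mu$ a.s.'' and extract a further subsequence along which the bounds converge a.s. This only proves that every \emph{deterministic} subsequence along which $m_{t,\eps}$ converges a.s. has its limit in ${\mathcal M}_{\lambda,\rho}$. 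The lemma asserts the much stronger fact that a.s. all \emph{random} subsequential limits lie there, and that stronger form is what Step one of the proof of Proposition \ref{proposition_2_2} actually uses, since the $\liminf$/$\limsup$ of the current are attained along random times. In-probability convergence of the upper bound $r_t$ is perfectly compatible with $\limsup_t r_t(f)>\nu^\rho(f)$ on a set of positive probability, and along the random times realizing that $\limsup$ your sandwich yields nothing. The paper sidesteps the transfer-through-shift problem entirely: by \eqref{mapping_markov}--\eqref{mapping_shift} and \eqref{def_poisson}--\eqref{mapping_tilde}, the shifted bounding process is a spatial translate of the \emph{unshifted} stationary process at times $\alpha t+s$, so $l_t$ and $r_t$ are space-time averages of a single equilibrium trajectory over moving rectangles, and Proposition \ref{prop_ergo} in its extended form \eqref{ergo2} (stated for arbitrary rectangles precisely for this purpose) gives genuine a.s. convergence $l_t\to\nu^\lambda$, $r_t\to\nu^\rho$; the sandwich then handles arbitrary random subsequences.

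The ${\mathcal I}_v$ step has the same non sequitur in milder form. Your Dynkin/martingale computation correctly gives $\Exp\left[\left(m_{t,\eps}(L_vf)\right)^2\right]=O(1/t)$, but ``convergence in $L^2$, hence any subsequential a.s. limit annihilates $L_vf$'' does not follow: $O(1/t)$ is not summable, so Borel--Cantelli does not apply along integers, and in-probability convergence does not constrain random subsequential limits. This gap is repairable, unlike the previous one, by working along $t_n=n^2$ (summable variances) and interpolating, using that for $t\in[n^2,(n+1)^2]$ the measures $m_{t,\eps}\circ\theta'_{[\beta t],\alpha t}$ and $m_{n^2,\eps}\circ\theta'_{[\beta n^2],\alpha n^2}$ differ by boundary effects of total variation $O(1/n)$ (note the interpolation must also absorb the change in the shift parameters, not just in the averaging length); but none of this is in your write-up. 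The paper instead proves the exponential Donsker--Varadhan type upper bound of Lemma \ref{deviation_empirical}, whose summability makes Borel--Cantelli immediate. That choice is not cosmetic: it is the strategy announced in the introduction, namely that large deviation estimates, unlike subadditivity or a.s. limits, survive the space-time shift because they depend only on the fixed-$t$ distribution.
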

The proof of Lemma \ref{lemma_empirical} will be based on the
following large deviation result  in the spirit of \cite{dv1}.
\begin{lemma}\label{deviation_empirical}
 (i) The functional ${\mathcal D}_v$ defined on ${\mathcal
P}(\bf X)$ by
\be \label{def_dirichlet} {\mathcal D}_v(\mu):=\sup_{f\,\rm {local}}
-\int\frac{L_v e^f}{e^f}(\widetilde\eta)d\mu(\widetilde\eta) \ee
 is nonnegative,
lower-semicontinuous, and ${\mathcal D}_v^{-1}(0)={\mathcal I}_v$.\\ \\
(ii) Let $\widetilde{\xi}_.$ be a Markov process with generator
$L_v$ and distribution  denoted by  $\bf P$. Define the
empirical  measures
\be \label{def_empirical-gen}
\pi_t(\widetilde\xi_.):=t^{-1}\int_0^t\delta_{\widetilde\xi_s}ds,\quad
\pi_{t,\eps}:=\abs{\Z\cap[-\eps t,\eps
t]}^{-1}\sum_{x\in\Z\cap[-\eps t,\eps t]}\tau_x\pi_t
\ee
where $\eps>0$.  Then, for every closed   subset
$F$ of ${\mathcal P}(\bf X)$ and every $t\geq 0$,
\be \label{ld} \limsup_{t\to\infty}t^{-1}\log{\bf
P}\left(\pi_{t,\eps}(\widetilde\xi_.)\in F\right)\leq -\inf_{\mu\in
F}{\mathcal D}_v(\mu) \ee
\end{lemma}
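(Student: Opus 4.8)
The plan is to treat the two parts separately, part (i) being a structural statement about the functional $\mathcal{D}_v$ and part (ii) a Donsker--Varadhan type upper bound whose only nonclassical feature is the spatial averaging defining $\pi_{t,\eps}$.

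For part (i), nonnegativity is immediate by taking $f\equiv 0$ in the supremum, since $L_v e^{0}/e^{0}=L_v\mathbf 1=0$. Lower semicontinuity follows because, for each fixed local $f$, the function $L_v e^f/e^f$ is bounded and continuous on the compact space $\mathbf X$ (here $e^f$ is bounded away from $0$, and $L_v e^f$ is bounded under {\em (A2)} since only jumps touching the finite support of $f$ contribute), so $\mu\mapsto-\int (L_v e^f/e^f)\,d\mu$ is weakly continuous and $\mathcal D_v$, a supremum of continuous functions, is lower semicontinuous. For the zero set I would use the elementary convexity bound $e^x-1\ge x$ applied jumpwise to get $L_v e^f/e^f\ge L_v f$ pointwise; if $\mu\in\mathcal I_v$ this gives $-\int (L_v e^f/e^f)\,d\mu\le-\int L_v f\,d\mu=0$, hence $\mathcal D_v(\mu)=0$. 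Conversely, if $\mathcal D_v(\mu)=0$, I would insert $f=\eps g$ for a local $g$, expand $L_v e^{\eps g}/e^{\eps g}=\eps L_v g+O(\eps^2)$, and let $\eps\to0^{\pm}$ to obtain $\int L_v g\,d\mu=0$ for every local $g$; since the local functions form a core for $L_v$, this means $\mu\in\mathcal I_v$.

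For part (ii), the engine is the exponential martingale $M^f_t=\exp\big(f(\widetilde\xi_t)-f(\widetilde\xi_0)-\int_0^t (L_v e^f/e^f)(\widetilde\xi_s)\,ds\big)$, which has mean one and whose boundary term is bounded by $2\|f\|_\infty$ uniformly. The difficulty specific to this statement is that $\pi_{t,\eps}$ averages $\pi_t$ over the spatial translates $\tau_x$, $|x|\le\eps t$. The key observation is that $L_v$ is translation invariant, so $\tau_x(L_v e^f/e^f)=L_v e^{\tau_x f}/e^{\tau_x f}$ and $\|\tau_x f\|_\infty=\|f\|_\infty$; thus $\int (L_v e^f/e^f)\,d\pi_{t,\eps}$ is the $|x|\le\eps t$ average of the additive functionals attached to the test functions $\tau_x f$. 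I would then apply Jensen's inequality to pull the average outside the exponential, so that $\mathbf E\big[\exp(-t\int (L_v e^f/e^f)\,d\pi_{t,\eps})\big]$ is bounded by the $x$-average of $\mathbf E[\exp(-\int_0^t (L_v e^{\tau_x f}/e^{\tau_x f})\,ds)]$, and each such term equals $\mathbf E[M^{\tau_x f}_t\,e^{\tau_x f(\widetilde\xi_0)-\tau_x f(\widetilde\xi_t)}]\le e^{2\|f\|_\infty}$ by the mean-one property. Combining with Markov's inequality yields, for every local $f$, the per-test-function bound $\limsup_t t^{-1}\log\mathbf P(\pi_{t,\eps}\in F)\le\sup_{\mu\in F}\int (L_v e^f/e^f)\,d\mu$.

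To finish I would pass from this per-$f$ bound to the rate $\mathcal D_v$ by the standard compactness argument: $F$ is compact in the compact space $\mathcal P(\mathbf X)$, so for each $\mu\in F$ and $\delta>0$ I choose $f_\mu$ nearly attaining $\mathcal D_v(\mu)$ (truncated at level $1/\delta$), use continuity of $\nu\mapsto-\int (L_v e^{f_\mu}/e^{f_\mu})\,d\nu$ to get a neighborhood $O_\mu$ on which the integral stays below $-\min(\mathcal D_v(\mu),1/\delta)+2\delta$, extract a finite subcover $O_{\mu_1},\dots,O_{\mu_n}$, and bound $\mathbf P(\pi_{t,\eps}\in F)$ by the sum of the probabilities of the $O_{\mu_i}$. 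Taking $\limsup$ and then $\delta\to0$ gives $-\inf_{\mu\in F}\mathcal D_v(\mu)$. The main obstacle is precisely this last minimax step: the inequality $\inf_f\sup_\mu\le\sup_\mu\inf_f$ fails in general, and it is compactness of $\mathcal P(\mathbf X)$ together with lower semicontinuity of $\mathcal D_v$ that closes the gap; a secondary point requiring care is verifying under {\em (A2)} that $L_v e^f$ is a genuine bounded function and that $M^f_t$ is a true mean-one martingale, so that the Jensen/translation-invariance reduction is legitimate.
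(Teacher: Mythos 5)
Your proof is correct, and part (i) coincides with the paper's own argument: the same elementary jump inequality (giving $L_v e^f/e^f\ge L_v f$, equivalently the paper's $L_v(\log g)\le L_v g/g$) for the inclusion ${\mathcal I}_v\subset{\mathcal D}_v^{-1}(0)$, and the same linearization around $f=0$ plus the core property of local functions for the converse. In part (ii), however, you take a genuinely different route at the key technical step, namely how the spatial average defining $\pi_{t,\eps}$ is absorbed into an exponential bound. The paper introduces the time-dependent averaged test function $\bar f(t,\cdot)=\abs{\Z\cap[-\eps t,\eps t]}^{-1}\sum_{\abs{x}\le\eps t}\tau_x f$, patches exponential martingales over the intervals $[n\eps^{-1},(n+1)\eps^{-1})$ into a time-inhomogeneous mean-one martingale $M^f_t$ used as a Radon--Nikodym derivative $d{\bf P}^f/d{\bf P}$, and then needs two extra ingredients: the convexity of $f\mapsto\int e^{-f}L_v[e^f]\,d\mu$ (proved by a carr\'e du champ positivity computation) to dominate the compensator attached to $\bar f$ by the average of the translated compensators, and control of a remainder $R^f_t=O(\log t)$ produced by the discrete jumps of the averaging window. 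You avoid all of this: keeping one standard, time-homogeneous exponential martingale per translate $\tau_x f$, you use translation invariance of $L_v$ to write $\int(L_v e^f/e^f)\,d\pi_{t,\eps}$ as the average over $\abs{x}\le\eps t$ of $\int(L_v e^{\tau_x f}/e^{\tau_x f})\,d\pi_t$, and Jensen's inequality for $\exp$ then pulls the average outside, so the bound $e^{2\norm{f}_\infty}$ on each translate's exponential moment (uniform in $x$ and $t$) survives averaging. In effect you move the convexity from the Dirichlet-type functional (the paper's claim) to the exponential itself, which eliminates both the carr\'e du champ lemma and the remainder analysis at no cost for the upper bound; what the paper's change-of-measure formulation buys is the explicit tilted process ${\bf P}^f$, the natural object if one wanted lower bounds or a full large deviation principle. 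The concluding step is the same in substance: the paper invokes the minimax lemma of Kipnis--Landim (Appendix 2, Lemma 3.3), while you reprove it by a finite subcover of the compact set $F$ together with lower semicontinuity; both are legitimate.
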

\subsection{Proofs of Lemma \ref{lemma_2_2_2},
Proposition \ref{proposition_2_2} and Corollary
\ref{corollary_2_2}} \label{proof_prop}
\begin{proof}{Proposition}{proposition_2_2}
We denote by $\omega'=(\eta,\xi,\omega)$ a generic element of
$\Omega'$. We will establish the following limits: first,
%
%
%\be \label{liminf}
%\liminf_{t\to\infty}\,t^{-1}\phi^{v}_t\circ\theta'_{[\beta
%t],\alpha t}(\eta,\xi,\omega)\geq
%\inf_{r\in[\lambda,\rho]\cap{\mathcal R}} [G(r)-vr],\quad
%\bar{\nu}^{\lambda,\rho}\otimes\Prob\mbox{-a.s.} \ee
%
\beq \inf_{r\in[\lambda,\rho]\cap{\mathcal R}} [G(r)-v r] & \leq
 & \liminf_{t\to\infty}\,t^{-1}\phi^{v}_t\circ\theta'_{[\beta
t],\alpha t}(\omega')\nonumber\\
& \leq &
\limsup_{t\to\infty}\,t^{-1}\phi^{v}_t\circ\theta'_{[\beta
t],\alpha t}(\omega')\nonumber\\
& \leq & \sup_{r\in[\lambda,\rho]\cap{\mathcal R}} [G(r)-vr],\quad
\bar{\nu}^{\lambda,\rho}\otimes\Prob\mbox{-a.s.}\label{liminf_better}
\eeq
and then
\be \label{limsup}
\limsup_{t\to\infty}\,t^{-1}\phi^{v}_t\circ\theta'_{[\beta
t],\alpha t}(\omega')\leq
\inf_{r\in[\lambda,\rho]\cap{\mathcal R}} [G(r)-vr],\quad
\bar{\nu}^{\lambda,\rho}\otimes\Prob\mbox{-a.s.} \ee
which will imply the result, when combined with Proposition
\ref{proposition_1} and  the expression \eqref{difference_phi} of
${\mathcal N}_t^{v,w}(\omega')$.
Though only the first inequality in \eqref{liminf_better} (together
with \eqref{limsup})  seems relevant to Proposition
\ref{proposition_2_2}, we will need the whole set of inequalities:
Indeed, writing \eqref{liminf_better} for
$\lambda=\rho=r\in{\mathcal R}$  proves the equilibrium result
of Lemma \ref{lemma_2_2_2}.\\ \\
To obtain the bounds in \eqref{liminf_better} we proceed as follows.
First we replace the deterministic path $vt$ in the current
$\phi^{v}_t$ by  $x_t^t(\omega^+)$.  Then we consider a spatial
average of  $\varphi^{x^t_.(\omega^+)+x}_t$  for $x\in[-\eps t,\eps
t]$,  and we introduce, for $\zeta\in\mathbf{X}$, the martingale
$M^{x,v}_t(\zeta,\omega,\omega^+)$  associated to
$\varphi^{x^t_.(\omega^+)+x}_t(\zeta,\omega)$  (see below
\eqref{martingale}). An exponential bound on the martingale reduces
the derivation of \eqref{liminf_better} to  bounds (deduced thanks
to Lemma \ref{lemma_empirical}) on
\[
\int [f(\eta)-v\eta(1)]m_{t,\eps}(\theta'_{[\beta t],\alpha
t}\omega',\omega^+)(d\eta) \]
(see below \eqref{replace_oncemore}), where $[f(\eta)-v\eta(1)]$
corresponds to the compensator of
$\varphi^{x^t_.(\omega^+)+x}_t(\zeta,\omega)$ in
$M^{x,v}_t(\zeta,\omega,\omega^+)$.
The bound \eqref{limsup} relies on Lemmas \ref{lemma_2_2_2} and
\ref{lemma_2_2_2_1} combined with the monotonicity of the
process.\\ \\
{\em Step one: proof of \eqref{liminf_better}.}
 We have
\be \label{sothat} t^{-1}\phi^v_t\circ\theta'_{[\beta t],\alpha
t}(\omega')=t^{-1}\varphi^v_t(\varpi_{\alpha t},\theta_{[\beta
t],\alpha t}\omega) \ee
where  the configuration
\be \label{def_varpi} \varpi_{\alpha t}=\varpi_{\alpha
t}(\eta,\xi,\omega):=T\left( \tau_{[\beta t]}\eta_{\alpha
t}(\eta,\omega), \tau_{[\beta t]}\eta_{\alpha t}(\xi,\omega)
 \right)
\ee
depends only on the restriction of $\omega$ to $[0,\alpha
t]\times\Z$. Thus, since $\omega$ is a Poisson measure under
$\Prob$, $\theta_{[\beta t],\alpha t}\omega$ is independent of
$\varpi_{\alpha t}(\eta,\xi,\omega)$ under
$\bar{\nu}^{\lambda,\rho}\otimes\Prob$, and
\be \label{hence_varpi} \mbox{under
$\bar{\nu}^{\lambda,\rho}\otimes\Prob\otimes\Prob^+,\,\varpi_{\alpha
t}$ is independent of $(\theta_{[\beta t],\alpha
t}\omega,\omega^+)$}\ee
 Define
$$
\psi_t^{v,\eps}(\zeta,\omega,\omega^+):=\abs{\Z\cap[-\eps t,\eps
t]}^{-1}\sum_{y\in\Z:\,\abs{y}\leq\eps
t}\varphi^{x^t_.(\omega^+)+y}_t(\zeta,\omega)
$$
for every $(\zeta,\omega,\omega^+)\in{\bf
X}\times\Omega\times\Omega^+$, with $x^t_.(\omega^+)$ given by
\eqref{def_poisson}.
By \eqref{comparison_12},
$$
\abs{\varphi^v_t(\zeta,\omega)-\psi_t^{v,\eps}(\zeta,\omega,\omega^+)
 }\leq
K\left(2\eps t+\abs{x^t_t(\omega^+)-vt}\right) $$
Since $t^{-1}x_t^t(\omega^+)\to v$ with $\Prob^+$-probability one,
the proof of \eqref{liminf_better} can be reduced to that of the
same inequalities with the l.h.s. of \eqref{sothat} replaced by
\be \label{replace_1} t^{-1}\psi^{v,\eps}_t(\varpi_{\alpha
t},\theta_{[\beta t],\alpha t}\omega,\omega^+) \ee
and $\bar{\nu}^{\lambda,\rho}\otimes\Prob$ replaced by
$\bar{\nu}^{\lambda,\rho}\otimes\Prob\otimes\Prob^+$.
Let $f(\eta):=f^+(\eta)-f^-(\eta)$, with
\beq\nonumber%\label{def_f}
f^+(\eta) &=& \sum_{y,z\in\Z:\,y\leq
0<y+z}p(z)b(\eta(y),\eta(y+z))\\\nonumber f^-(\eta) & = &
\sum_{y,z\in\Z:\,y+z\leq 0<y}p(z)b(\eta(y),\eta(y+z)) \eeq
and define $\delta$ to be $1$ if $v>0$, $0$ if $v<0$, and any
integer if $v=0$. By  the definition of particle current
\eqref{current_3}--\eqref{self_motion}, we have that, for any
$\zeta\in\mathbf{X}$,
\beq M^{x,v}_t(\zeta,\omega,\omega^+)&:=&
\varphi^{x^t_.(\omega^+)+x}_t(\zeta,\omega)\nonumber\\
&& -\int_0^t
[f(\tau_x\widetilde{\eta}^t_{s^-}(\zeta,\omega,\omega^+))\nonumber\\
&&
\qquad-v\widetilde{\eta}^t_{s^-}(\zeta,\omega,\omega^+)(x+\delta)]
ds \label{martingale}\\
E^{x,v,\theta}_t(\zeta,\omega,\omega^+) & := & \exp\left\{
\theta\varphi^{x^t_.(\omega^+)+x}_t(\zeta,\omega)\right.
\nonumber\\
&&\left.- \left( e^\theta-1\right)\int_0^t
f^+(\tau_x\widetilde{\eta}^t_{s^-}(\zeta,\omega,\omega^+))ds\right.
\nonumber\\
&& \left. - \left( e^{-\theta}-1\right)\int_0^t
f^-(\tau_x\widetilde{\eta}^t_{s^-}(\zeta,\omega,\omega^+))ds\right.\nonumber\\
&& \left. -\int_0^t \abs{v}\left( e^{-{\rm
sgn}(v)\theta\widetilde{\eta}^t_{s^-}(\zeta,\omega,\omega^+)(x+\delta)}
-1 \right) ds
\right\}\label{girsanov}\\
& = & \exp\left\{ \theta M^{x,v}_t(\zeta,\omega,\omega^+)\right.
\nonumber\\
&&\left.- \left( e^\theta-1-\theta \right)\int_0^t
f^+(\tau_x\widetilde{\eta}^t_{s^-}(\zeta,\omega,\omega^+))ds\right.\nonumber\\
&& \left. - \left( e^{-\theta}-1+\theta \right)\int_0^t
f^-(\tau_x\widetilde{\eta}^t_{s^-}(\zeta,\omega,\omega^+))ds
\right.\nonumber\\
&&- \int_0^t \abs{v}\left( e^{-{\rm
sgn}(v)\theta\widetilde{\eta}^t_{s^-}(\zeta,\omega,\omega^+)(x+\delta)}
-1\right.
\nonumber\\
&&\left.\left.\qquad\qquad+{\rm
sgn}(v)\theta\widetilde{\eta}^t_{s^-}(\zeta,\omega,\omega^+)(x+\delta)
\right)ds \right\}\nonumber \eeq
 are martingales under $\Prob\otimes\Prob^+$, with respective
means $0$ and $1$. Notice that $\eta_{s^-}$ and $\eta_s$ can be
replaced with each other in the above martingales, because,  by the
graphical construction of Section \ref{graphical_construction},
$s\mapsto \eta_s(x)$ is $\Prob\otimes{\Prob}^+$-a.s. locally
piecewise constant for every $\zeta\in{\bf X}$ and $x\in\Z$. It
follows from \eqref{girsanov} that
\be \label{expo_bound} {\Exp}\left( e^{ \theta M^{x,v}_t }
\right)\leq e^{ Ct \left( e^{K\abs{\theta}} -1-K\abs{\theta} \right)
} \ee
 for any $\zeta\in{\bf X}$, where expectation is w.r.t.
$\Prob\otimes\Prob^+$, and the constant $C$ depends only on $p(.)$,
$b(.)$ and $v$ but not on $\zeta$. Cramer's inequality and
\eqref{expo_bound} imply the large deviation bound
\be \label{deviation} \Prob\otimes\Prob^+( \{ \abs{ M^{x,v}_t }\geq
y \} )\leq 2e^{ -t{\mathcal I}_C(y) } \ee
 for any $\zeta\in{\bf X}$ and $y\geq 0$, with the rate
function
\[
{\mathcal I}_C(y)=\frac{y}{K}\log\left( 1+\frac{y}{CK}
\right)-C\left[ \frac{y}{CK}-\log\left( 1+\frac{y}{CK} \right)
\right]
\]
Because of the independence property \eqref{hence_varpi}, by
\eqref{deviation},
$$
\bar{\nu}^{\lambda,\rho}\otimes{\Prob}\otimes{\Prob}^+\left(
\{ \abs{ M^{x,v}_t( \varpi_{\alpha t},\theta_{[\beta t],\alpha
t}\omega,\omega^+) ) }\geq y\}
\right)\leq 2e^{ -t{\mathcal I}_C(y) }
$$ % \ee
 This and Borel Cantelli's lemma imply that
\be \label{as_2} \lim_{t\to\infty}t^{-1}\abs{\Z\cap[-\eps t,\eps
t]}^{-1}\sum_{x\in\Z:\,\abs{x}\leq\eps t}M^{x,v}_t( \varpi_{\alpha
t},\theta_{[\beta t],\alpha t}\omega,\omega^+) =0,
 \ee
$\bar{\nu}^{\lambda,\rho}\otimes{\Prob}\otimes{\Prob}^+\mbox{-a.s.}$
In view of \eqref{martingale} and \eqref{as_2}, the proof of
\eqref{liminf_better} is now reduced to that of the same set of
inequalities with \eqref{replace_1} replaced by
\begin{eqnarray*} t^{-1}\abs{\Z\cap[-\eps t,\eps
t]}^{-1}\int_0^t\sum_{x\in\Z:\,\abs{x}\leq\eps t}&&[\tau_x
f(\widetilde{\eta}^t_{s}(\varpi_{\alpha t},\theta_{[\beta t],\alpha
t}\omega,\omega^+)\nonumber\\
&&\, -\tau_x v\widetilde{\eta}^t_{s}(\varpi_{\alpha
t},\theta_{[\beta t],\alpha t}\omega,\omega^+)(1)]ds
%\label{replace_again}
\end{eqnarray*}
 which is exactly, because of \eqref{def_varpi},
\be \label{replace_oncemore} \int
[f(\eta)-v\eta(1)]m_{t,\eps}(\theta'_{[\beta t],\alpha
t}\omega',\omega^+)(d\eta) \ee
with the empirical measure $m_{t,\eps}$ defined in
\eqref{def_empirical_2}.
By Proposition \ref{invariant} and Lemma \ref{lemma_empirical},
every subsequential limit  $\nu$ as $t\to\infty$ of
$m_{t,\eps}(\theta'_{[\beta t],\alpha t}\omega',\omega^+)$ is of the
form
$$
\nu=\int\nu^r\gamma(dr)
$$
for some measure $\gamma$  supported on  ${\mathcal
R}\cap[\lambda,\rho]$. Then the corresponding subsequential limit as
$t\to\infty$ of \eqref{replace_oncemore} is of the form
$$
\int[G(r)-vr]\gamma(dr)
$$
 as one verifies, using shift invariance of $\nu^r$, that
$$
\int [f(\eta)-v\eta(1)]\nu^r(d\eta)=G(r)-vr
$$
This concludes the proof.\\ \\
{\em Step two: proof of \eqref{limsup}.}
Let $u_1\le \underbar{v}\le v\le \bar v\le v_1$,
$\lambda<\rho\in\mathcal R$, and $r\in[\lambda,\rho]\cap{\mathcal
R}$. We set $\varsigma=\eta^r$, and we define
$\bar{\nu}^{\lambda,r,\rho}$ as the coupling distribution of
$(\eta^\lambda,\eta^r,\eta^\rho)$. Note that, by the stochastic
 ordering property \eqref{strassen},
\be \label{ordered_coupling_trio}
\bar{\nu}^{\lambda,r,\rho}\left\{
(\eta,\varsigma,\xi)\in{\bf X}^{3}:\,\eta\leq\varsigma \leq\xi
\right\}=1 \ee
The following limits are true for
$\bar{\nu}^{\lambda,r,\rho}$-a.e. $(\eta,\varsigma,\xi)$. By
 the expression \eqref{difference_phi} of ${\mathcal
N}^{.,.}_t$ and the equilibrium limit Lemma \ref{lemma_2_2_2},
\be \label{eq:step1} \lim_{t\to\infty}\,\frac{1}{t}{\mathcal
N}^{u_1,v}_t\circ\theta'_{[\beta t],\alpha
t}(\varsigma,\varsigma,\omega)=r(v-u_1)\ee
By attractiveness,
\be\label{eq:step1bis}
\liminf_{t\to\infty}\frac{1}{t}
{\mathcal N}^{u_1,v}_t\circ\theta'_{[\beta t],\alpha
t}(\varsigma,\xi,\omega) \ge
\lim_{t\to\infty}\,\frac{1}{t}
{\mathcal N}^{u_1,v}_t\circ\theta'_{[\beta t],\alpha
t}(\varsigma,\varsigma,\omega) \ee
Putting together \eqref{eq:step1} and \eqref{eq:step1bis}, \be
\label{eq:step2} \liminf_{t\to\infty}\frac{1}{t}{\mathcal
N}^{u_1,v}_t\circ\theta'_{[\beta t],\alpha
t}(\varsigma,\xi,\omega)\ge r(v-u_1)\ee
Now, by \eqref{difference_phi}, Lemma \ref{lemma_2_2_2_1} and
Lemma \ref{lemma_2_2_2} respectively for $r$ and $\rho$,
\be\label{eq:step3} \lim_{t\to\infty}\,\frac{1}{t}{\mathcal
N}^{u_1,v_1}_t\circ\theta'_{[\beta t],\alpha
t}(\varsigma,\xi,\omega) =(G(r)-u_1 r)-(G(\rho)-v_1\rho)\ee
Subtracting \eqref{eq:step2} to \eqref{eq:step3}, we get
\be\label{eq:step4} \limsup_{t\to\infty}\,\frac{1}{t}{\mathcal
N}^{v,v_1}_t\circ\theta'_{[\beta t],\alpha
t}(\varsigma,\xi,\omega) \le(G(r)-vr)-(G(\rho)-v_1\rho)\ee
By attractiveness, \eqref{ordered_coupling_trio} and
\eqref{eq:step4}, we have
\beq\nonumber \limsup_{t\to\infty}\,\frac{1}{t}{\mathcal
N}^{v,v_1}_t\circ\theta'_{[\beta t],\alpha t}(\eta,\xi,\omega)
&\le & \limsup_{t\to\infty}\,\frac{1}{t}{\mathcal
N}^{v,v_1}_t\circ\theta'_{[\beta t],\alpha t}(\varsigma,\xi,\omega)\\
&\le & (G(r)-vr)-(G(\rho)-v_1\rho) \label{eq:step5} \eeq
 Using \eqref{difference_phi}, \eqref{eq:step5}, Lemma
\ref{lemma_2_2_2_1}, and Lemma \ref{lemma_2_2_2} for $\rho$, we
obtain
\begin{eqnarray*}\label{eq:step6} &&\limsup_{t\to\infty}\,
\frac{1}{t}\phi_t^v\circ\theta'_{[\beta t],\alpha
t}(\omega')\\&=& \limsup_{t\to\infty}\,
\left(\frac{1}{t}\phi_t^v\circ\theta'_{[\beta t],\alpha
t}(\omega')
-\frac{1}{t}\phi_t^{v_1}\circ\theta'_{[\beta t],\alpha
t}(\omega')
+\frac{1}{t}\phi_t^{v_1}\circ\theta'_{[\beta t],\alpha
t}(\omega')\right)\\ &\leq&
\limsup_{t\to\infty}\,\frac{1}{t}{\mathcal
N}^{v,v_1}_t\circ\theta'_{[\beta t],\alpha t}(\omega')
+\limsup_{t\to\infty}\,\frac{1}{t}\phi_t^{v_1}\circ\theta'_{[\beta
t],\alpha t}(\omega')\\ &\leq&
((G(r)-vr)-(G(\rho)-v_1\rho))+(G(\rho)-v_1\rho)\\
& = & G(r)-vr\end{eqnarray*}
for every $r\in[\lambda,\rho]\cap{\mathcal R}$. Since $\varsigma$ is
no longer involved in the above inequalities, we obtain a
$\bar{\nu}^{\lambda,\rho}$-a.s. limit with respect to $(\eta,\xi)$
for every $r\in[\lambda,\rho]\cap{\mathcal R}$. By continuity of $G$
this holds outside a common exceptional set  of
$\bar{\nu}^{\lambda,\rho}$-probability $0$  for all
$r\in[\lambda,\rho]\cap{\mathcal R}$. This proves \eqref{limsup}.
\end{proof}
\mbox{}\\
\begin{proof}{Corollary}{corollary_2_2}
\\ \\
{\em (i).} By Proposition \ref{proposition_1}  (cf.
\cite[(28)]{bgrs2}),
$$
\frac{d}{dv}[G(h_c(v))-vh_c(v)]=-h_c(v)
$$
weakly with respect to $v$.  Thus, setting
$u=R_{\lambda,\rho}$, we have
\be \label{difference_2}
[G(u(v,1))-vu(v,1)]-[G(u(w,1))-wu(w,1)]=\int_v^w u(x,1)dx \ee
for all $v,w\in\R$.  Let $a<b$ in $\R$. Setting
$$
\varpi=T\left(\tau_{[Nx_0]}\eta_{Ns_0}(\eta,\omega),\tau_{[Nx_0]}
\eta_{Ns_0}(\xi,\omega)\right)
$$
we have
\begin{eqnarray*}
\beta^N_{Nt}(\theta'_{[Nx_0],Ns_0}(\eta,\xi,\omega))((a,b]) & = &
N^{-1}\sum_{[Na]<x\leq [Nb]}\eta_{Nt}(\varpi,\theta_{[Nx_0],Ns_0}
\omega)(x) \\
& = & t (Nt)^{-1}{\mathcal
N}^{a/t,b/t}_{Nt}(\theta'_{[Nx_0],Ns_0}\omega')
\end{eqnarray*}
Thus, by Proposition \ref{proposition_2_2} and
\eqref{difference_2},
\be
\lim_{N\to\infty}\beta^N_{Nt}(\theta'_{[Nx_0],Ns_0}\omega')((a,b])
 = t\int_{a/t}^{b/t}u(x,1)dx
 = \int_a^b u(x,t)dx\label{lim_beta}\ee
$\bar{\nu}^{\lambda,\rho}\otimes\Prob$-a.s., where the last
equality follows from Proposition \ref{proposition_1}.
Now \eqref{difference_2} implies that the r.h.s. of \eqref{as} is a
continuous function of $(v,w)$, while the l.h.s. is a uniformly
Lipschitz function of $(v,w)$, since the number of particles per
site is bounded. It follows that one can find a single exceptional
set  of $\bar{\nu}^{\lambda,\rho}\otimes\Prob$-probability $0$
outside which \eqref{lim_beta} holds simultaneously for all
$a,b$, which proves the claim.\\ \\
 {\em (ii).} Since $\eta^{\lambda,\rho}$ has distribution
$\bar{\nu}^{\lambda,\rho}$ under $\Prob_0$, the statement follows
from {\em (i)} with $x_0=s_0=0$.
\end{proof}
\subsection{Proofs of remaining lemmas}\label{proofs_lemmas}
\begin{proof}{Lemma}{lemma_2_2_2_1}
Let $\eps>0$. We consider the probability space
$\Omega'\times(\Z^+)^\Z$ equipped with the product measure
$$
\Prob'_\eps:=\bar{\nu}^{\lambda,\rho}\otimes\Prob\otimes P_\eps
$$
where $P_\eps$ is the product measure on $\Z$ whose marginal at
each site is Poisson with mean $K(1+\eps)$.
A generic element of this space is denoted by $(\omega',\chi)$,
with $\omega'=(\eta,\xi,\omega)$ and  $\chi\in(\Z^+)^\Z$.
We first prove \eqref{requilibrium_current}. Because of
\eqref{ordered_coupling}  (that is, coupled configurations are
ordered under $\bar{\nu}^{\lambda,\rho}$), by the attractiveness
property \eqref{attractive_1}, we may define
\be \label{def_gamma}
\gamma_s(\omega'):=\eta_s(\xi,\omega)-\eta_s(T(\eta,\xi),\omega)
\ee
for $s\geq 0$.
 Therefore $\gamma$-particles represent the discrepancies
between the system starting from $\xi$ and the system starting
from $T(\eta,\xi)$. We look for $\bar v$ such that there are no
discrepancies to the right of $\bar v t$, in which case the
particles there should be distributed like $\xi$-particles,
according to the equilibrium measure $\nu^\rho$.
Let $v>\bar v$. Because $\gamma_{0}(x)=0$ for all $x>0$, by
 the definition of current \eqref{def_current},
\[%be\label{53}
\phi^v_t(\xi,\xi,\omega)=\phi^v_t(\omega')+
\sum_{y>[vt]}\gamma_{t}(y) \]%ee
Therefore, to prove \eqref{requilibrium_current}, we want to
obtain
\be \label{therefore_53} \lim_{t\to\infty}t^{-1}\sum_{y>\bar v
t}\gamma_t\circ\theta'_{[\beta t],\alpha
t}(\omega')(y)=0,\quad\bar{\nu}^{\lambda,\rho}\otimes\Prob\mbox{-a.s.}
\ee
To this end, we follow the proof of   \cite[Proposition 5]{afs},
with minor modifications.  We emphasize that even if the latter
proof corresponds to $\alpha=\beta=0$, we will see that the
arguments extend to $(\alpha,\beta)\neq (0,0)$. We label
$\gamma$-particles with $R_.^.$'s and $\chi$-particles with
$Z_.^.$'s as follows: we denote by $R_0^j=R_0^j(\omega')$, resp.
$Z_0^j=Z_0^j(\chi)$, the position of the $\gamma_0$-particle, resp.
$\chi$-particle, with label $j$
 (we take $j\le 0$).
The labelling is such that $R_0^j\le R_0^{j+1}$ and $Z_0^j\le
Z_0^{j+1}$ for all  $j<0$, where $R_0^{0}$, resp. $Z_0^0$, is the
position of the first $\gamma_0$-particle, resp. $\chi$-particle,
to the left of (or at) site $0$.  By the definition of $\chi$, the
number of $\chi$-particles between $-n$ and $0$ will be eventually
larger than $nK$. Let
$$
W(\chi):=\inf\left\{
n\in\Z^+:\,Z^j_0\geq -[\abs{j}/K]\mbox{ for every }j\leq -n
\right\}
$$
By Poisson large deviation bounds, the random variable $W$ is
$\Prob'_\eps$-a.s. finite with exponentially decaying
distribution.
Since $\gamma_0(x)\leq K$ for every $x\in\Z$, we have
$R^j_0\leq-[\abs{j}/K]$ for all $j\leq 0$, hence $Z_0^j\ge R_0^j$
for every   $j\leq -W(\chi)$.  The dynamics of $Z^j_.$ is
defined %precisely on $\Prob$ as follows
by: if $R_{t_-}^j=x$ and, for some $z>0$  and $u\in[0,1]$,
\be \label{evolution_z}
\{(t,x,z,u),(t,x+z,-z,u)\}\cap\,\omega\neq\emptyset\ee
then $Z_t^j=Z_{t_-}^j+z$.
In other words, $\chi$-particles evolve as mutually independent
(given their initial positions) random walks, that jump from $y$
to $y+z$ at rate
\[\overline{p}(z)=(p(z)+p(-z))||b||_\infty\]
for all $y\in\Z, z\ge 0$. Then, since a jump for $R_{.}^j$ from
$R_{t_-}^j=x$ to $R_{t}^j=x+z$ is possible only under
\eqref{evolution_z},
\be \label{obvious_coupling} R_0^j\leq Z_0^j\Rightarrow \forall
t>0,\,R_t^j\leq Z_t^j \ee
 In view of \eqref{therefore_53},
\eqref{obvious_coupling}, we estimate
\beq
\sum_{x>\bar v t}\gamma_{t}(x)=\sum_{j\le 0} \mathbf
1_{\{R_{t}^j(\omega')>\bar v t\}}  & = & \sum_{j\le -W(\chi)}
\mathbf 1_{\{R_{t}^j(\omega')>\bar v t\}}+
\sum_{-W(\chi)<j\leq 0} \mathbf
1_{\{R_{t}^j(\omega')>\bar v t\}}\nonumber\\
& \leq & \sum_{j\le -W(\chi)} \mathbf
1_{\{Z_{t}^j(\omega')>\bar v t\}}+W(\chi)\nonumber\\
& \leq &  \bar Z_{t}(\omega,\chi)+W(\chi)\label{disc} \eeq
 where $\bar Z_{t}:=\sum_{i\le 0}\mathbf
1_{\{Z_{t}^i>\bar v t\}}$ is a Poisson random variable with mean
\be\label{exp_Z} \Exp'_\eps\bar Z_{t}= K(1+\eps)\sum_{j\ge \bar v
t} \Prob'_\eps(Y_{t}>j)\ee
for $Y_{t}$ a random walk starting at $0$ that jumps from $y$ to
$y+z$ with rate $\overline{p}(z)$. Repeating \cite[(43)--(45)]{afs}
 gives that
\be\label{repeat_afs}\lim_{t\to\infty}\Exp'_\eps \bar Z_{t}/t=0
\ee
if we choose $\bar v>\sum_{z>0}z\overline{p}(z)$.
Let $\delta>0$. Since $\bar Z_{t}$ is a Poisson variable, \beq
\Prob'_\eps(|\bar Z_{t}-\Exp'_\eps\bar Z_{t}|>\delta t)&\le &
\frac{\Exp'_\eps(\bar Z_{t}-\Exp'_\eps\bar Z_{t})^4}
{(\delta t)^4}\nonumber\\
&\le & \frac{[\Exp'_\eps(\bar Z_{t}-\Exp'_\eps\bar Z_{t})^2]^2}
{(\delta t)^4}\nonumber\\
&\le & \frac{(\Exp'_\eps\bar Z_{t}/t)^2t^2}{(\delta
t)^4}\label{tail_estimate_Z} \eeq
Therefore, by Borel Cantelli lemma
\be\label{borel_cantelli_Z}\lim_{t\to\infty}(\bar
Z_{t}-\Exp'_\eps\bar Z_{t})/t=0,\quad\Prob'_\eps\mbox{-a.s.}\ee
Since $\Prob$ is invariant by $\theta_{[\beta t],\alpha t}$, $\bar
Z_t(\theta_{[\beta t],\alpha t}\omega,\chi)$ has the same
distribution as $\bar Z_t(\omega,\chi)$ under $\Prob'_\eps$.
 Thus \eqref{exp_Z}--\eqref{borel_cantelli_Z} still hold with
$\bar Z_t(\theta_{[\beta t],\alpha t}\omega,\chi)$ instead of
$\bar Z_t(\omega,\chi)$, and
\be \label{as_Z} \lim_{t\to\infty}t^{-1}\bar Z_t(\theta_{[\beta
t],\alpha t}\omega,\chi)=0,\quad\Prob'_\eps\mbox{-a.s.} \ee
 Because the random variable $W$ in \eqref{disc} does not
depend on $\omega'$, \eqref{as_Z} and \eqref{disc} imply
\eqref{therefore_53}. This concludes the proof of
\eqref{requilibrium_current}.\\ \\
If we now define $\gamma_s$ as
\[%be \label{def_gamma_bis}
\gamma_s(\omega'):=\eta_s(T(\eta,\xi),\omega)-\eta_s(\eta,\omega)
\]%ee
 and replace $\phi^v_t(\omega')$ by $-\phi^v_t(\omega')$
(so that the current, which was rightwards, becomes leftwards),
then the proof of \eqref{lequilibrium_current} can be obtained by
repeating the same steps as in the previous argument.
\end{proof}
\mbox{}\\ \\
 \begin{proof}{Lemma}{lemma_empirical}
\mbox{}\\ \\
\textbf{(i).}
Since
$$
m_t(\omega',\omega^+)-m_{[t]}(\omega',\omega^+)=\frac{[t]-t}{t}m_{[t]}
(\omega',\omega^+)+
t^{-1}\int_{[t]}^t\delta_{\widetilde{\eta}^t_s(T(\eta,\xi),\omega,\omega^+)}ds
$$
has total  variation  bounded by $2/t$, it is enough to prove the
result for every subsequential limit of the sequence
$m_{n,\eps}(\theta'_{[\beta n],\alpha n}\omega',\omega^+)$ as
$n\to\infty$, $n\in\N$.\\ \\
{\em Step one.} We prove that every subsequential limit lies in
${\mathcal I}_v$. It is enough to show that, for every  open
neighborhood $O$ of ${\mathcal I}_v$, with
$\bar{\nu}^{\lambda,\rho}\otimes\Prob\otimes\Prob^+$-probability
one, $m_{n,\eps}$ lies in $O$ for sufficiently large $n$.
 One can see from  \eqref{def_empirical_2}  and
\eqref{def_empirical-gen} that
$$m_{t,\eps}(\theta'_{[\beta t],\alpha
t}\omega',\omega^+)=\pi_{t,\eps}(\widetilde{\xi}^{^t}_.)$$
where, for fixed $t$, $\widetilde{\xi}^t_.$ is the process defined
by
$$
\widetilde{\xi}^{t}_s:=\widetilde{\eta}^t_s(\varpi_{\alpha
t},\theta_{[\beta t],\alpha t}\omega,\omega^+)
$$
with the configuration $\varpi_{\alpha t}$ defined in
\eqref{def_varpi} and satisfying \eqref{hence_varpi}. Hence under
$\bar{\nu}^{\lambda,\rho}\otimes\Prob\otimes\Prob^+$,
$\widetilde{\xi}^t_.$  is a Markov process with generator $L_v$
 and initial distribution $\nu^{\lambda,\rho}$ independent of
$t$.
By  Lemma  \ref{deviation_empirical},
\[
\limsup_{n\to\infty}n^{-1}\log\bar{\nu}^{\lambda,\rho}\otimes\Prob\otimes\Prob^+
\left( m_{n,\eps}(\theta'_{[\beta n],\alpha
n}\omega',\omega^+)\not\in O\right)<0 \]
Now Borel-Cantelli's lemma implies that, a.s. with respect to
$\bar{\nu}^{\lambda,\rho}\otimes\Prob\otimes\Prob^+$,
 $m_{n,\eps}(\theta'_{[\beta n],\alpha n}\omega',\omega^+)\in
O$  for large $n$.
\\ \\
{\em Step two.} We prove that every subsequential limit lies in
${\mathcal M}_{\lambda,\rho}$.  Since
$$
\eta\leq T(\eta,\xi)\leq\xi
$$
 for $\bar{\nu}^{\lambda,\rho}$- a.e. $(\eta,\xi)$,
\eqref{def_varpi} and
 the monotonicity property \eqref{attractive_1} imply
\[
\tau_{[\beta t]}\eta_{\alpha t}(\eta,\omega)\leq\varpi_{\alpha
t}(\eta,\xi,\omega)\leq \tau_{[\beta t]}\eta_{\alpha t}(\xi,\omega)
\]
for all $t\geq 0$.  By \eqref{def_empirical} and \eqref{def_varpi},
$$m_t(\theta'_{[\beta t],\alpha
t}\omega',\omega^+)=t^{-1}\int_0^t\delta_{\widetilde{\eta}^t_s(\varpi_{\alpha
t},\theta_{[\beta t],\alpha t}\omega,\omega^+)}ds$$
By \eqref{mapping_markov}--\eqref{mapping_shift}  and
\eqref{def_poisson}--\eqref{mapping_tilde},
$$
\widetilde{\eta}^t_s(\tau_{[\beta t]}\eta_{\alpha
t}(\eta,\omega),\theta_{[\beta t],\alpha t}\omega,\omega^+)
=\tau_{-\sgn(v)N_{\alpha t}(\omega^+)+[\beta t ]}
\widetilde{\eta}^0_{\alpha t+s}(\eta,\omega,\omega^+)
$$
Thus
\beq & & l_t(\eta,\omega,\omega^+):=t^{-1}\int_{\alpha t}^{\alpha
t+t}\abs{\Z\cap[-\eps t,\eps t]}^{-1}\sum_{x\in{\mathcal
X}_t(\omega^+)}\tau_x
\delta_{\widetilde{\eta}^0_{s}(\eta,\omega,\omega^+)}ds\nonumber\\
& \leq & m_{t,\eps}(\theta'_{[\beta t],\alpha
t}\omega',\omega^+)\nonumber\\
& \leq & t^{-1} \int_{\alpha t}^{\alpha t+t}\abs{\Z\cap[-\eps t,\eps
t]}^{-1}\sum_{x\in{\mathcal
X}_t(\omega^+)}\tau_x\delta_{\widetilde{\eta}^0_{s}(\xi,\omega,\omega^+)}ds=:r_t(\xi,\omega,\omega^+)\label{sandwich_empirical}
\eeq
$\bar{\nu}^{\lambda,\rho}\otimes\Prob\otimes\Prob^+$- a.s. for all
$t\geq 0$, where
$$
{\mathcal X}_t(\omega^+):=\Z\cap\left[[\beta t]-\sgn(v)N_{\alpha
t}(\omega^+)-\eps t,[\beta t]-\sgn(v)N_{\alpha t}(\omega^+)+\eps
t\right]
$$
We now argue that $l_t$ and $r_t$ respectively converge a.s. to
$\nu^\lambda$ and $\nu^\rho$ with respect to
$\bar{\nu}^{\lambda,\rho}\otimes\Prob\otimes\Prob^+$. Let us
consider for instance $l_t$.
Let $L_t$ denote the measure defined as $l_t$ but with ${\mathcal
X}_t(\omega^+)$ replaced by
$${\mathcal Y}_t:=\Z\cap[(\beta-\alpha v-\eps)t,(\beta-\alpha
v+\eps)t]$$
By the strong law of large numbers for Poisson processes, there
exists a subset ${\mathcal C}\subset\Omega^+$, with
$\Prob^+$-probability one, on which  $\sgn(v)N_t(\omega^+)/t\to v$
as $t\to\infty$. The total variation of $l_t-L_t$ is bounded by
$(2\varepsilon t+1)^{-1}\abs{{\mathcal X}_t\Delta{\mathcal Y}_t}$,
where $\Delta$ denotes the symmetric difference of two sets. Hence,
for $\omega^+\in\mathcal C$, $l_t-L_t$ converges to $0$ in total
variation. We are thus reduced to proving a.s. convergence of $L_t$
to $\nu^\lambda$.
Under $\overline{\nu}^{\lambda,\rho}\otimes\Prob\otimes\Prob^+$,
$\widetilde{\eta}^0_.(\eta,\omega,\omega^+)$ is a Feller process
with generator $L_v$ and initial distribution $\nu^\lambda$. Thanks
to \eqref{characterize_2} and \eqref{samesets}, we can apply
Proposition \ref{prop_ergo}, or more precisely, its extended form
\eqref{ergo2}. This implies convergence of $L_t$.  \\ \\
{\em Step three.} We prove that every subsequential limit lies in
${\mathcal S}$. To this end we simply note that the measure
\begin{eqnarray} \tau_1
m_{n,\eps}-m_{n,\eps}&=&\left|[-\eps n,\eps
n]\cap\Z\right|^{-1}\left(\sum_{x\in\Z\cap(-\eps n+1,\eps
n+1]}\tau_x
m_n\right.\nonumber\\
&&\left.\phantom{\ \left|[-\eps n,\eps n]\cap\Z\right|^{-1}\
}-\sum_{x\in\Z\cap[-\eps n,\eps n)}\tau_x
m_n\right)\label{shift_difference}
\end{eqnarray}
has total  variation  bounded by $2\abs{[-\eps n,\eps
n]\cap\Z}^{-1}=O(1/n)$. Letting $n\to\infty$ in
\eqref{shift_difference} shows that $\tau_1 m=m$ for any
subsequential limit $m$ of $m_{n,\eps}$.\\ \\
\textbf{(ii).} Proposition \ref{invariant} implies
$\nu=\int\nu^r\gamma(dr)$  with $\gamma$ supported on $\mathcal R$.
Let $[\lambda',\rho']\subset{\mathcal R}$ denote the support of
$\gamma$, and assume for instance that $\lambda'<\lambda$. Choose
some $\lambda''\in(\lambda',\lambda)$. By Proposition
\ref{Rezakhanlou}, the random variable
$$
M(\eta):=\lim_{l\to\infty}(2l+1)^{-1}\sum_{x=-l}^l\eta(x)
$$
is defined $\nu^r$-a.s. for every $r\in{\mathcal R}$, and thus also
$\nu$-a.s. It is a nondecreasing function of $\eta$. Thus,
$\nu^\lambda\leq\nu$ implies
$$
\nu(M<\lambda'')\leq\nu^\lambda(M<\lambda'')=0
$$
where the last equality follows from Proposition
\ref{Rezakhanlou}, hence a contradiction. Similarly $\rho'>\rho$
implies a contradiction. Thus $\gamma$ is supported on ${\mathcal
R}\cap[\lambda,\rho]$.
\end{proof}
\mbox{}\\
\begin{proof}{Lemma}{deviation_empirical}
\mbox{}\\ \\
 {\em (i).}
Nonnegativity follows from taking $f=0$ in \eqref{def_dirichlet}. As
a supremum of continuous functions, ${\mathcal D}_v$ is lower
semicontinuous : indeed, because  the number of particles per site
is bounded,  each local function $e^f$ is continuous and bounded and
so is $L_v(e^f)/e^f$, hence the functional defined on ${\mathcal
P}({\bf X})$ by
\be\label{phi_f} \phi_f(\mu)=-\int \frac{L_v(e^{f})}{e^f}d\mu \ee
is continuous. The inclusion ${\mathcal I}_v\subset{\mathcal
D}_v^{-1}(0)$ holds because of
\[%\label{inequality_generator}
L_v(\log g)\leq L_vg/g\]%ee
which follows from the elementary inequality $\log b -\log a\leq
(b-a)/a$, and the fact that $\int L_v(\log g)d\mu=0$ if
$\mu\in{\mathcal I}_v$. We eventually prove the reverse inclusion
${\mathcal D}_v^{-1}(0)\subset{\mathcal I}_v$. Fix a local test
function  $f$.  If $\mu\in{\mathcal D}_v^{-1}(0)$, we must have
\be \label{wemusthave} I(t):=\int
\frac{L_v(e^{tf})}{e^{tf}}d\mu\geq 0,\quad\forall t\in\R\ee
As $f$ is  local  and the space $\{0,\ldots,K\}$ is finite,
integrability conditions are satisfied to differentiate $I(t)$ in
\eqref{wemusthave} under the integral. Since $I(0)=0$, equation
\eqref{wemusthave} implies that $I(t)$ has a minimum at $t=0$,
hence
$$
0=\frac{dI(t)}{dt}_{|t=0}=\int\frac{d}{dt}\frac{L_v(e^{tf})}
{e^{tf}}_{|t=0}d\mu=\int L_vf d\mu
$$
and thus $\mu\in{\mathcal I}_v$, since this is true for any local
function.\\ \\
{\em (ii).}
Since $\phi_f$ is continuous, by \cite[Appendix 2, Lemma 3.3]{kl},
it is enough to prove that
\be\label{ld_open} \limsup_{t\to\infty}t^{-1}\log{\bf
P}\left(\pi_{t,\eps}(\widetilde{\xi}_.)\in O\right)\leq
\inf_{f\,\mbox{\tiny local}}\sup_{\mu\in O}-\phi_f(\mu) \ee
for every open subset $O\subset{\mathcal P}({\bf X})$.
Let $f$ be a local test function on $\bf X$,
and set
$$
\bar{f}(t,\eta):=\abs{\Z\cap[-\eps t,\eps t]}^{-1} \sum_{x\in
\Z\cap[-\eps t,\eps t]} \tau_x f(\eta)=\sum_{n=0}^\infty
1_{[n\varepsilon^{-1},(n+1)\varepsilon^{-1})}(t)\bar{f}_n(\eta)
$$
where
$$
\bar{f}_n(\eta):=(2n+1)^{-1}\sum_{x=-n}^n\tau_x f(\eta)
$$
For each $n\in\Z^+$,
$$M^{f,n}_t:=\exp\left\{
\bar{f}_n(\widetilde\xi_t)-\bar{f}_n(\widetilde\xi_{n\varepsilon^{-1}})-
\int_{n\varepsilon^{-1}}^t
e^{-\bar{f}_n} L_v[e^{\bar{f}_n}] (\widetilde\xi_{s^-})ds
\right\},\quad t\geq n\varepsilon^{-1} $$
is a mean $1$ martingale under $\bf P$ with respect to the
$\sigma$-field ${\mathcal G}_t$ generated by
$(\widetilde\xi_s,\,s\leq t)$ (cf. \cite[Section 7 of Appendix
1]{kl}).
It follows that $M^f_t$ defined for $t\geq 0$ by
$$
M^f_t:=\prod_{k=1}^{n}M^{f,k-1}_{(k\varepsilon^{-1})^-}\,M^{f,n}_t,\quad
t\in[n\varepsilon^{-1},(n+1)\varepsilon^{-1})
$$
(where the product is $1$ for $n=0$) is a mean $1$ ${\mathcal
G}_t$-martingale under $\bf P$. Thus we can define a probability
measure ${\bf P}^f$ on ${\mathcal G}_t$ by $d{\bf P}^f/d{\bf
P}=M_t^f$.
A simple computation shows that
\be\label{expomart} M^f_t=\exp\left\{
\bar{f}(t,\widetilde\xi_t)-\bar{f}(0,\widetilde\xi_0)-\int_0^t
e^{-\bar{f}}L_v[e^{\bar{f}}](s,\widetilde\xi_{s^-})ds+R^f_t
\right\} \ee
where
\be\label{remainder} R^f_t=\sum_{n=1}^{[\eps
t]}\left[\bar{f}_{n-1}(\widetilde{\xi}_{(n\varepsilon^{-1})^-})-
\bar{f}_{n}(\widetilde{\xi}_{n\varepsilon^{-1}})\right] \ee
Notice that, by the graphical construction of Section
\ref{graphical_construction},  $s\mapsto\eta_s(x)$ is for each
$x\in\Z$ a piecewise constant function whose jumps occur at (random)
times which are a subset of some Poisson process.  Thus $s^-$ in
\eqref{expomart} can be replaced by $s$, and $(n\varepsilon^{-1})^-$
in \eqref{remainder} by $n\varepsilon^{-1}$. The latter implies that
the summand in \eqref{remainder} is bounded in modulus by
$4(2n+1)^{-1}\sup\abs{f}$. Hence
\be\label{bound_remainder} \abs{R^f_t}\leq 2[1+\log(\eps
t)]\sup\abs{f}\ee
We claim (this will be established below) that, for every
probability measure $\mu$ on $\bf X$, the mapping $f\mapsto\int
e^{-f}L_v[e^f]d\mu$ (defined on the set of local functions $f:{\bf
X}\to\R$) is convex. Since $L_v$ commutes with the space shift, this
implies
%
%
%
%
% \frac{d{\bf P}}{d{\bf P}^f}\leq-R_t^f+
%
% \bar{f}(0,\widetilde{\xi}_0)-\bar{f}(t,\widetilde{\xi}_t)
%
% + \abs{\Z\cap[-\eps t,\eps t]}^{-1}\sum_{x\in\Z \cap[-\eps t,\eps
% t]}
%
% \int_0^t
% \tau_x\frac{L_v[e^f]}{e^f}(\widetilde{\xi}_s)ds\right\}
%
%
$$
\frac{d{\bf P}}{d{\bf P}^f}\leq\exp\left\{ -R_t^f+
\bar{f}(0,\widetilde{\xi}_0)-\bar{f}(t,\widetilde{\xi}_t)
+ t\int
\frac{L_v[e^f]}{e^f}(\eta)\pi_{t,\eps}(\widetilde{\xi}_.)(d\eta)\right\}
$$
 Thus, for any open subset $O$ of ${\mathcal P}({\bf X})$, we
have (cf. \eqref{phi_f})
\begin{eqnarray} {\bf P}(\pi_{t,\eps}(\widetilde\xi_.)\in
O)&\leq& e^{-R_t^f+2\sup\abs{f}}\int
e^{-t\phi_f[\pi_{t,\eps}(\widetilde\xi_.)]}{\bf
1}_O[\pi_{t,\eps}(\widetilde\xi_.)]d{\bf P}^f(\widetilde\xi_.)\nonumber\\
&\leq& \exp\left\{-R_t^f+2\sup\abs{f}-t\inf_{\mu\in O}\phi_f(\mu)
\right\}\label{noting_that}
\end{eqnarray}
Using \eqref{bound_remainder} and minimizing the
r.h.s. of \eqref{noting_that} over local functions $f$, we obtain \eqref{ld_open}.\\ \\
{\em Proof of claim.} We prove that $f\mapsto \int
e^{-f}L_v[e^f]d\mu=-\phi_f(\mu)$ is convex for any $\mu\in{\mathcal
P}({\bf X})$. Equivalently we show that, for any  local functions
$f,g$ on $\bf X$,  $t\mapsto -\phi_{tf+g}(\mu)$  is convex on $\R$.
We have
$$
\frac{d^2}{dt^2}\int\frac{L_v e^{tf+g}}{e^{tf+g}}d\mu=\int\frac{
L_v(f^2 e^{tf+g})
-2(tf+g)L_v(f e^{tf+g})
+f^2L_v(e^{tf+g})}{e^{tf+g}
}d\mu
$$
The above integrand is nonnegative. Indeed, for local functions
$\varphi$ and $\psi$,
\be\label{carreduchamp} L_v (\varphi^2\psi)-2 \varphi
L_v(\varphi\psi)+\varphi^2 L_v\psi=
L_v^\psi(\varphi^2)-2\varphi L_v^\psi\varphi
\ee
where $L_v^\psi\varphi:=L_v(\varphi\psi)-\varphi L_v\psi$. For
$\psi\geq 0$, $L_v^\psi$ is a Markov generator, and thus the r.h.s.
of \eqref{carreduchamp} is nonnegative.
\end{proof}
\section{The Cauchy problem}
\label{general}
%
%j'avais, sur une vieille version, note que dans l'intro il
%faudrait bien expliquer la difference de cette partie Cauchy
%entre BGRS2 et BGRS3,
%qu'ici il y avait un schema, et qu'on travaillait sur \Delta_N
%et non sur les points limites.
%il me semble qu'on a laisse tomber ce point de vue, mais
%peut-etre je me trompe.
%
In Corollary \ref{corollary_2_2}, we established an almost sure
hydrodynamic limit for initial measures corresponding to the Riemann
problem with $\mathcal R$-valued initial densities. In this section
we prove that this implies Theorem \ref{th:hydro}, that is the
almost sure hydrodynamic limit for {\em any} initial sequence
associated with {\em any} measurable initial density profile  (thus
we add in this Section the hypothesis $p(.)$ finite range, which was
not necessary for Riemann a.s.  hydrodynamics).
 This passage is inspired by Glimm's scheme, a well-known
procedure in the theory of hyperbolic conservation laws, by which
one constructs general entropy solutions using only Riemann
solutions  (see {\em e.g.} \cite[Chapter 5]{serre}).
In  \cite[Section 5]{bgrs2}, we undertook such a derivation for
convergence in probability. In the present context of almost sure
convergence, new error analysis is necessary. In particular, we have
to do an explicit time discretization (vs. the ``instantaneous
limit'' of \cite[Section 3, Theorem 3.2]{bgrs1} or \cite[Section
5]{bgrs2} for the analogue of \eqref{wearegoing} below), we need
estimates uniform in time (Lemma \ref{cor_approx}), and each
approximation step requires a control with
exponential bounds (Proposition \ref{lemma_bm} and Lemma \ref{finite_prop}). %to obtain a.s. limits.
%
%We first state preliminary results  that will be key tools in the
%proof of Theorem \ref{th:hydro}, and refine in the way we have just
%explicited results in \cite{bgrs2}.
%
\subsection{Preliminary results}
For two measures $\alpha,\beta\in{\mathcal M}^+(\R)$ with compact
support, we define
\be\label{def_delta}\Delta(\alpha,\beta):=\sup_{x\in\R}\abs{
\alpha((-\infty,x])-\beta((-\infty,x])}\ee
When $\alpha$ or $\beta$  %in \eqref{def_delta}
is of the form $u(.)dx$ for $u(.)\in L^\infty(\R)$
 with compact support, we simply
write $u$ in \eqref{def_delta} instead of $u(.)dx$.
A connection between $\Delta$ and vague convergence is given by the
following technical lemma, whose proof is left to the reader.
\begin{lemma}
\label{delta_conv}
(i)  Let $(\alpha^N)_N$ be a ${\mathcal M}^+(\R)$-valued sequence
supported on a common compact subset of $\R$,  and $u(.)\in
L^\infty(\R)$. The following statements are equivalent: (a)
$\alpha^N\to u(.)dx$ as $N\to\infty$; (b) $\Delta(\alpha^N,u(.))\to 0$ as $N\to\infty$.\\ \\
(ii) Let $(\alpha^N(.))_N$ be a sequence   of $\mathcal
M^+(\R)$-valued functions  $\alpha^N:{\mathcal T}\to{\mathcal
M}^+(\R)$,  where $\mathcal T$ is an arbitrary set, such that the
measures $\alpha^N(t)$ are supported on a common compact subset of
$\R$. Assume that, for some $\alpha:[0,+\infty)\to {\mathcal
M}^+(\R)$, $\Delta(\alpha^N(t),\alpha(t))$ converges to $0$
uniformly on $\mathcal T$.  Then $\alpha^N(.)$ converges to
$\alpha(.)$ uniformly on $\mathcal T$.
\end{lemma}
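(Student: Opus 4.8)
The plan is to identify $\Delta$ with the uniform (Kolmogorov--Smirnov) distance between left-continuous cumulative distribution functions and to exploit that the CDF of the limit is continuous. Write $F^N(x):=\alpha^N((-\infty,x])$ and, for the second argument, $F(x):=\int_{-\infty}^x u(y)\,dy$; these are nondecreasing (the measures being positive, so $u\ge 0$), $F^N$ is right-continuous, and $F$ is Lipschitz with constant $\norm{u}_\infty$. By hypothesis all the underlying measures live on a fixed compact interval $[-A,A]$, so $F^N$ and $F$ are constant outside $[-A,A]$, and $\Delta(\alpha^N,u)=\sup_x\abs{F^N(x)-F(x)}$. The basic quantitative tool is integration by parts: for any $C^1$ function $\psi$ with compact support the boundary terms vanish, so $\int_\R\psi\,d\alpha^N=-\int_\R\psi'(x)F^N(x)\,dx$ and likewise $\int_\R\psi\,u\,dx=-\int_\R\psi'(x)F(x)\,dx$, whence
\[
\abs{\int_\R\psi\,d\alpha^N-\int_\R\psi\,u\,dx}\le\norm{\psi'}_{L^1}\,\Delta(\alpha^N,u).
\]
This bound yields (b)$\Rightarrow$(a) for $C^1$ test functions; the extension to arbitrary $\psi\in C_c(\R)$ follows by uniform approximation by $C^1$ functions, using that the total masses stay bounded (indeed $\abs{\alpha^N(\R)-\int u}=\abs{F^N(A)-F(A)}\le\Delta(\alpha^N,u)$).

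For the converse (a)$\Rightarrow$(b), the key point is the continuity of $F$. Vague convergence together with the common compact support gives weak convergence (test against a continuous function equal to $1$ on $[-A,A]$ to see that total masses converge), hence $F^N(x)\to F(x)$ at every continuity point of $F$, i.e. at every $x\in\R$; this is obtained by sandwiching $\mathbf 1_{(-\infty,x]}$ between continuous compactly supported functions and using continuity of $F$ to let the approximation parameter tend to $0$. It then remains to upgrade pointwise to uniform convergence, which is a P\'olya-type argument: given $\eps>0$ choose, by uniform continuity of $F$ on $[-A,A]$, a finite grid $x_0<\cdots<x_m$ with $F(x_{i+1})-F(x_i)<\eps$ for each $i$; monotonicity of $F^N$ and of $F$ sandwiches both between their values at adjacent grid points, and pointwise convergence at the finitely many $x_i$ then forces $\sup_x\abs{F^N(x)-F(x)}<2\eps$ for large $N$. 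This gives $\Delta(\alpha^N,u)\to0$ and completes part (i).

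Part (ii) is then immediate from the uniform-in-$t$ character of the integration-by-parts estimate. For a fixed $C^1$ test function $\psi$, applying the displayed bound with $\alpha^N(t),\alpha(t)$ in place of $\alpha^N,u$ and taking the supremum over $t\in\mathcal T$ gives $\sup_{t}\abs{\int\psi\,d\alpha^N(t)-\int\psi\,d\alpha(t)}\le\norm{\psi'}_{L^1}\sup_t\Delta(\alpha^N(t),\alpha(t))\to0$. An arbitrary $\psi\in C_c(\R)$ is handled by approximating it uniformly by a $C^1$ function $\phi$ and controlling the error by $\norm{\psi-\phi}_\infty$ times the total masses, which are uniformly bounded in $t$ because $\abs{\alpha^N(t)(\R)-\alpha(t)(\R)}\le\Delta(\alpha^N(t),\alpha(t))$.

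The main obstacle is the (a)$\Rightarrow$(b) implication: one must check that vague convergence really delivers pointwise convergence of the CDFs—which is where the common compact support is used, to rule out escaping mass—and then invoke the P\'olya-type monotonicity argument to pass from pointwise to uniform convergence, the continuity of the limit $F$ being essential (the equivalence would fail for a limit measure carrying atoms). Everything else is routine bookkeeping, and because the controlling estimate $\norm{\psi'}_{L^1}\Delta$ is already uniform in the parameter $t$, part (ii) costs nothing beyond part (i).
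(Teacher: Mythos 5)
The paper gives no proof of this lemma (it is explicitly ``left to the reader''), so your proposal stands or falls on its own. Part (i) stands: the integration-by-parts bound $\abs{\int\psi\,d\alpha^N-\int\psi u\,dx}\le\norm{\psi'}_{L^1}\Delta(\alpha^N,u)$ handles (b)$\Rightarrow$(a), and for (a)$\Rightarrow$(b) the Helly--Bray sandwich plus the P\'olya monotonicity argument is valid here because $F$ is Lipschitz (so every point is a continuity point) and all the CDFs are constant outside the common compact; the mass bound you need for the $C^1\to C_c$ approximation is available there since $\Delta(\alpha^N,u)\to0$ forces $\alpha^N(\R)\to\int u$.

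Part (ii), however, has a genuine gap, precisely in the passage from $C^1$ to merely continuous test functions. You justify the required uniform-in-$t$ mass bound by $\abs{\alpha^N(t)(\R)-\alpha(t)(\R)}\le\Delta(\alpha^N(t),\alpha(t))$, but this controls only the \emph{difference} of the two masses; nothing in the hypotheses prevents both masses from being unbounded over $t\in{\mathcal T}$, and then the error term $\norm{\psi-\phi}_\infty\times(\mbox{total masses})$ is useless. The gap is not cosmetic: without a uniform mass bound, the conclusion in the sense the paper uses it (uniform convergence of $\int\psi\,d\alpha^N(t)$ for \emph{every} $\psi\in C_c(\R)$) is false. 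Indeed, take ${\mathcal T}=\N$ and $\psi\in C_c(\R)$ whose restriction to $[0,1]$ is continuous with infinite variation; for each $m\in\N$ choose interlaced points $a_1<b_1<a_2<b_2<\cdots<a_{n_m}<b_{n_m}$ in $[0,1]$ with $\sum_{j}[\psi(a_j)-\psi(b_j)]\geq m$ (possible because the negative variation of $\psi$ is infinite), and set
\[
\alpha(m):=\sum_{j=1}^{n_m}\delta_{b_j},\qquad
\alpha^N(m):=\alpha(m)+\frac1N\sum_{j=1}^{n_m}\left(\delta_{a_j}-\delta_{b_j}\right)\in{\mathcal M}^+(\R).
\]
Since the points interlace, the CDF of $\sum_j(\delta_{a_j}-\delta_{b_j})$ takes values in $\{0,1\}$, so $\Delta(\alpha^N(m),\alpha(m))\leq 1/N$ uniformly in $m$, and all measures are supported in $[0,1]$; yet $\int\psi\,d\alpha^N(m)-\int\psi\,d\alpha(m)=\frac1N\sum_j[\psi(a_j)-\psi(b_j)]\geq m/N$ is unbounded in $m$ for every fixed $N$. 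Your $C^1$ estimate is untouched by this example (it really is uniform in $t$); what fails is exactly the approximation step you glossed over.

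The repair is to add (or observe as implicit) the hypothesis $\sup_{t\in{\mathcal T}}\alpha(t)(\R)<+\infty$. With it, your difference inequality does yield $\sup_{t\in{\mathcal T}}\alpha^N(t)(\R)<+\infty$ for all large $N$, the approximation error becomes uniformly small, and your proof of (ii) closes. This costs nothing in the paper's applications: there the masses are uniformly bounded automatically, since the particle number is conserved (so $\alpha^N(\eta^N_{Nt})(\R)=\alpha^N(\eta^N_0)(\R)$) and $\alpha(t)=u(.,t)dx$ with $u$ bounded by $K$ and supported in a fixed compact on bounded time intervals.
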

The following proposition is a collection of results on entropy
solutions. We first recall two definitions.
A sequence $(u_n,n\in\N)$ of Borel measurable functions on $\R$ is
said to converge to $u$ in $L^1_{\rm loc}(\R)$ if and only if
$$
\lim_{n\to\infty} \int_I\abs{u_n(x)-u(x)}dx=0
$$
for every bounded interval $I\subset\R$.
The variation of a function $u(.)$ on an interval $I\subset\R$ is
defined by
$$
{\rm TV}_I[u(.)]= \sup\left\{
\sum_{i=0}^{n-1} \abs{ u(x_{i+1})-u(x_i) }:\,
n\in\N,\,x_0,\ldots,x_n\in I, x_0<\cdots<x_n
\right\}
$$
We shall simply write ${\rm TV}$ for ${\rm TV}_\R$.
We say that $u=u(.,.)$ defined on $\R\times\R^{+*}$ has locally
bounded space variation if  for every bounded   space
interval $I\subset\R$ and bounded time interval $J\subset\R^{+*}$
\[%be \label{def_bounded}
\sup_{t\in J} {\rm TV}_I[u(.,t)]<+\infty \]%ee
\begin{proposition}
\label{standard}
\mbox{}\\ \\
o) Let $u(.,.)$ be the entropy solution to \eqref{hydrodynamics}
with Cauchy datum  $u_0\in L^\infty(\R)$. Then the mapping
$t\mapsto u(.,t)$ lies in $C^0([0,+\infty),L^1_{\rm loc}(\R))$.\\
\\
i) If $u_0(.)$ is {\em a.e.} $\mathcal R$-valued, then so is the
corresponding entropy solution
$u(.,t)$ to \eqref{hydrodynamics} at later times.\\ \\
ii) If $u_0^i(.)$ has finite variation, that is ${\rm
TV}u_0^i(.)<+\infty$, then so does $u^i(.,t)$ for every $t>0$, and
${\rm TV}u^i(.,t)\leq{\rm TV}u_0^i(.)$.\\ \\
iii) Finite propagation property: Assume $u^i(.,.)$ ($i\in\{1,2\}$)
is the entropy solution to \eqref{hydrodynamics} with Cauchy data
$u_0^i(.)$. Let $V=||G'||_\infty:=\sup_\rho\abs{G'(\rho)}$. Then:
(a) for every $x<y$ and $0\leq t<(y-x)/2V$,
\be \label{lonestab}
\int_{x+Vt}^{y-Vt}\abs{u^1(z,t)-u^2(z,t)}dz\leq\int_x^y\abs{u_0^1(z)-u_0^2(z)}dz
\ee
In particular, if  $u_0^1$ is supported (resp. coincides with
$u_0^2$) in $[-R,R]$ for some $R>0$, $u^1(.,t)$ is supported (resp.
coincides with $u^2(.,t)$) in $[-R-Vt,R+Vt]$. (b) If $\int_\R
u_0^i(z)dz<+\infty$,
\be \label{deltastab}
\Delta(u^1(.,t),u^2(.,t))\leq\Delta(u^1_0(.),u^2_0(.)) \ee
iv) Let $x_0=-\infty<x_1<\cdots<x_n<x_{n+1}=+\infty$ and
$\eps:=\min_{0\le k \le n}(x_{k+1}-x_k)$.  Denote by $u_0(.)$ the
piecewise constant profile with value $r_k$ on $I_k:=(x_k,x_{k+1})$.
Then, for $t<\eps/(2V)$, the entropy solution $u(.,t)$ to
\eqref{hydrodynamics} with Cauchy datum $u_0(.)$ is given by
$$
u(x,t)=R_{r_{k-1},r_k}(x-x_k,t),\quad \forall
x\in\left(x_{k-1}+Vt,x_{k+1}-Vt\right)
$$
\end{proposition}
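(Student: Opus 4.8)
The backbone of the entire statement is the Kru\v{z}kov $L^1$-contraction with finite speed of propagation, which is exactly \eqref{lonestab} in (iii)(a); I would establish this first, citing Kru\v{z}kov's doubling-of-variables argument \cite{kru} (see also \cite{serre,vol}) applied to the flux $G$ of \eqref{flux}, the propagation speed being the Lipschitz constant $V=\sup_\rho\abs{G'(\rho)}$. The two ``in particular'' assertions of (iii)(a) then follow by specialising \eqref{lonestab}: taking $u^2\equiv 0$ (a constant, hence entropy, solution) and applying \eqref{lonestab} on intervals $[x,y]$ on which $u_0^1$ vanishes, then letting the free endpoint tend to $\pm\infty$, shows that the support of $u^1(\cdot,t)$ spreads at speed at most $V$; the coincidence assertion is identical, applied to the difference $u^1-u^2$ on intervals where the data agree.

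From (iii)(a) the ``soft'' items fall out quickly. For (ii) I would use translation invariance of \eqref{hydrodynamics}: $u^i(\cdot+h,t)$ is the entropy solution with datum $u_0^i(\cdot+h)$, so the global form of \eqref{lonestab} (obtained by letting $y-x\to\infty$) gives $\|u^i(\cdot+h,t)-u^i(\cdot,t)\|_{L^1}\le\|u_0^i(\cdot+h)-u_0^i\|_{L^1}$, and dividing by $h$ together with the characterisation ${\rm TV}[w]=\sup_{h>0}h^{-1}\|w(\cdot+h)-w\|_{L^1}$ yields ${\rm TV}\,u^i(\cdot,t)\le{\rm TV}\,u_0^i$. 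For (o), when $u_0$ has bounded variation the equation and (ii) bound $\partial_t u=-\partial_x G(u)$ in the sense of measures, giving a Lipschitz-in-time estimate $\|u(\cdot,t)-u(\cdot,s)\|_{L^1(I)}\le V\,{\rm TV}(u_0)\,\abs{t-s}$ on bounded $I$; for general $u_0\in L^\infty$ I would approximate by bounded-variation data and pass to the limit, the convergence being uniform in $t$ thanks to the (localised) $L^1$-contraction, so $t\mapsto u(\cdot,t)$ is a uniform limit of maps in $C^0([0,\infty),L^1_{\rm loc})$.

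Part (iv) is again pure finite propagation: for the piecewise constant datum, the single-jump (Riemann) datum at $x_k$ coincides with $u_0$ on $(x_{k-1},x_{k+1})$, so by the coincidence form of (iii)(a) the two entropy solutions agree on $(x_{k-1}+Vt,x_{k+1}-Vt)$, which for $t<\eps/(2V)$ is an interval around $x_k$; since the single-jump solution equals $R_{r_{k-1},r_k}(\cdot-x_k,t)$, this is the claim. Part (i) is the only genuinely model-specific point. For piecewise constant $\mathcal R$-valued data it holds for short times by combining (iv) with Proposition \ref{proposition_1}(ii), which asserts that each Riemann solution is a.e. $\mathcal R$-valued; to reach all times and all $\mathcal R$-valued $u_0\in L^\infty$ I would run Glimm's scheme (as announced in the introduction, cf. \cite[Chapter 5]{serre}): every grid value produced by the scheme is a sampled value of a Riemann solution, hence lies in $\mathcal R$, the scheme converges in $L^1_{\rm loc}$ to the entropy solution, and since $\mathcal R$ is closed the a.e. limit is again $\mathcal R$-valued; the general case follows by approximating $u_0$ in $L^1_{\rm loc}$ by piecewise constant $\mathcal R$-valued profiles and using the $L^1$-stability of (iii)(a).

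Finally, for the $\Delta$-stability \eqref{deltastab} I would pass to primitives: writing $U^i(x,t)=\int_{-\infty}^x u^i(z,t)\,dz$ (finite since $\int u_0^i<\infty$), the conservation law \eqref{hydrodynamics} integrates, using $u^i(-\infty,\cdot)=0$ and $G(0)=0$ (the latter from (A3)), to the Hamilton--Jacobi equation $\partial_t U^i+G(\partial_x U^i)=0$, of which $U^i$ is the viscosity solution; the classical $L^\infty$ comparison principle for Hamilton--Jacobi equations (or, more directly, the argument of \cite{bgrs1}) then gives $\|U^1(\cdot,t)-U^2(\cdot,t)\|_\infty\le\|U^1(\cdot,0)-U^2(\cdot,0)\|_\infty$, which is precisely $\Delta(u^1(\cdot,t),u^2(\cdot,t))\le\Delta(u_0^1,u_0^2)$. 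I expect the main obstacle to be part (i): unlike the others it does not reduce to the scalar contraction alone but requires the Riemann-level information of Proposition \ref{proposition_1}(ii) to be propagated through the Glimm construction, with care that both the random sampling and the passage to the limit preserve membership in the closed set $\mathcal R$.
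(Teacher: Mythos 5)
Your proposal is correct and, in substance, it is the paper's approach: the paper offers no argument of its own here, stating that o), ii), iii) are standard (citing \cite{kru,serre,lax1}), that i) is \cite[Lemma 5.3]{bgrs2}, and that iv) is \cite[Lemma 3.4]{bgrs1}, which it notes is ``a consequence of iii)'' --- precisely the coincidence-plus-translation argument you give, including the coverage remark $x_{k+1}-Vt\geq x_k+Vt$ for $t\leq\eps/(2V)$. Your reconstructions match what lies behind those citations (the Kru\v{z}kov contraction as backbone, $\Delta$-stability via primitives and Hamilton--Jacobi comparison, and for i) the propagation of $\mathcal R$-valuedness through Riemann solutions using Proposition \ref{proposition_1}(ii), closedness of $\mathcal R$ and $L^1$-stability), the only cosmetic difference being that the proof cited for i) iterates a deterministic re-approximation by piecewise constant $\mathcal R$-valued profiles over small time steps rather than literal Glimm random sampling.
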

Properties o), ii) and iii) are standard, see  \cite{kru, serre,
lax1}. Properties i) and iv) are respectively \cite[Lemma
5.3]{bgrs2} and \cite[Lemma 3.4]{bgrs1}. The latter states that the
entropy solution starting from a piecewise constant profile can be
constructed at small times as a superposition of successive
non-interacting Riemann waves. This is a consequence of iii). Note
that the whole space is indeed covered by the definition of $u(x,t)$
in iv), since we have $x_{k+1}-Vt\geq x_k+Vt$ for $t\leq
\eps/(2V)$.\\ \\
The next lemma   improves \cite[Lemma 5.5]{bgrs2} by deriving an
approximation uniform in time.
%
%This improvement is crucial
%to extend the approach of that paper to almost sure convergence.
%
\begin{lemma}
\label{cor_approx} Assume $u_0(.)$ is a.e. $\mathcal R$-valued, has
bounded support and finite variation, and let $(x,t)\mapsto u(x,t)$
be the entropy solution to \eqref{hydrodynamics} with Cauchy datum
$u_0(.)$. For every $\eps>0$,  let ${\mathcal P}_\eps$ be the set of
piecewise constant $\mathcal R$-valued functions on $\R$ with
compact support and step lengths at least $\eps$, and  set
\[
\delta_\eps(t):=\eps^{-1}\inf\{ \Delta(u(.),u(.,t)):\,
u(.)\in{\mathcal P}_\eps \}
\]
Then there is a sequence $\eps_n\downarrow 0$ as $n\to\infty$ such
that $\delta_{\eps_n}$ converges to $0$ uniformly on any bounded
subset of $\R^+$.
\end{lemma}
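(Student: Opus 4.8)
The plan is to treat this as a purely PDE-level approximation statement, handled first at a single time and then uniformly in $t$, the latter being the genuinely new point. The structural input is Proposition \ref{standard}: the entropy solution $u(\cdot,t)$ is a.e. $\mathcal R$-valued (i), has total variation bounded by ${\rm TV}\,u_0(\cdot)$ for every $t$ (ii), and by finite propagation (iii) is supported in a fixed compact set $[-R_T,R_T]$ for $t\in[0,T]$; moreover $t\mapsto u(\cdot,t)$ lies in $C^0([0,\infty),L1_{loc})$ (o). Since the supports are uniformly bounded, $\{u(\cdot,t):t\in[0,T]\}$ is a continuous image of $[0,T]$ in $L1(\R)$, hence compact there, and I will exploit this compactness in the uniformization step.

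For a fixed time I would bound $\delta_\eps(t)$ essentially as in \cite[Lemma 5.5]{bgrs2}. Given a threshold $\kappa>0$, only finitely many jumps of $u(\cdot,t)$ exceed $\kappa$; I place breakpoints of the approximating profile exactly at those large jumps and fill the gaps by a grid of mesh comparable to $\eps$, thinned so that all step lengths remain $\ge\eps$. On each resulting cell the oscillation is then at most $\kappa$ plus the continuous modulus of $u(\cdot,t)$ (which tends to $0$ as $\eps\to0$), so assigning to each cell an $\mathcal R$-value near the cell average controls the within-cell contribution to $\Delta$ by $\eps(\kappa+\mbox{modulus})$. The accumulated error is the delicate point: on cells where $u(\cdot,t)$ is continuous the average already lies in $\mathcal R$ (a continuous $\mathcal R$-valued function stays in one connected component of $\mathcal R$, hence so do its local averages), so no quantization error arises there; quantization is forced only when a cell straddles a gap of $\mathcal R$, which requires a jump across that gap, and the total such error is controlled by the variation carried by jumps of size $\le\kappa$, which $\to0$ with $\kappa$. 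Letting first $\eps\to0$ and then $\kappa\to0$ gives $\delta_\eps(t)\to0$, and Lemma \ref{delta_conv} translates this back to the vague topology.

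The main work, and the reason only a sequence $\eps_n\downarrow0$ is asserted rather than $\eps\to0$, is to make this uniform in $t$. The soft ingredients are that $\mu\mapsto\inf_{w\in\mathcal P_\eps}\Delta(\mu,w)$ is $1$-Lipschitz for $\Delta$, while $\Delta\le\norm{\cdot}_{L1}$, so $\delta_\eps(\cdot)$ is $\eps^{-1}$-Lipschitz in the $L1$-distance and, for fixed $\eps$, continuous in $t$; combined with the $L1$-compactness above this reduces $\sup_{t\le T}\delta_\eps(t)$ to a maximum over a finite net of times. The obstruction is exactly the degenerate factor $\eps^{-1}$: the fixed-time modulus above depends on $t$ through the positions and sizes of the jumps of $u(\cdot,t)$, and these evolve as shocks form and interact, so two large jumps may momentarily lie within $\eps$ of one another, where step lengths $\ge\eps$ cannot resolve both. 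To tame this I would first replace $u_0$ by a piecewise-constant $\mathcal R$-valued datum, paying only $\Delta(u_0,\cdot)$ in the error uniformly in time by the contraction \eqref{deltastab}; by Proposition \ref{standard}(iv) the corresponding solution is, away from finitely many interaction times, an explicit superposition of non-interacting Riemann waves whose number and separation are controlled on $[0,T]$.

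The crux is then a diagonal extraction: one must refine the net of times compatibly with $\eps_n$ so that the net error $\eps_n^{-1}\times(\mbox{net radius})$ still vanishes, yet the finitely many wave-interaction windows supplied by (iv) remain harmless; choosing $\eps_n$ to satisfy these two competing requirements simultaneously is where the sequence comes from. I expect precisely this reconciliation — quantitative uniform control of the per-time modulus across the finitely many controlled interactions, against the blow-up of the Lipschitz constant $\eps^{-1}$ — to be the hard part, whereas the single-time estimate and the compactness reductions are routine given Proposition \ref{standard}.
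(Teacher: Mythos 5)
Your proposal is incomplete at exactly the point that constitutes the lemma: the uniformity in $t$. The fixed-time approximation and the continuity of $\delta_\eps$ are indeed the routine part (the paper gets the fixed-time statement by citing \cite[Lemma 5.5]{bgrs2}, and continuity from $\abs{\delta_\eps(t)-\delta_\eps(s)}\le\eps^{-1}\Delta(u(.,t),u(.,s))$ together with Proposition \ref{standard}, o)), but your treatment of the uniformization ends by naming two competing requirements and saying you \emph{expect} their reconciliation to be the hard part; that is a plan, not a proof. Moreover the ingredients you earmark for it cannot deliver: Proposition \ref{standard}, iv) describes the solution of a piecewise constant datum only for $t<\eps/(2V)$, i.e.\ strictly before any wave interaction, so it supplies no ``superposition of non-interacting Riemann waves away from finitely many interaction times'' on all of $[0,T]$ (for a merely Lipschitz flux $G$ the solution of piecewise constant data does not even remain piecewise constant); and replacing $u_0$ by a piecewise constant approximation via \eqref{deltastab} costs a \emph{fixed} $\Delta$-error, which after multiplication by $\eps^{-1}$ diverges unless that approximation is refined together with $\eps$ --- precisely the circularity you set out to remove. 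For comparison, the paper's own proof uses no wave anatomy at all: continuity of each $\delta_\eps$, pointwise convergence $\delta_\eps(t)\to 0$ for each fixed $t$, a diagonal extraction over a countable dense set of times intended to make $n\mapsto\delta_{\eps_n}(t)$ nonincreasing, and then Dini's theorem to turn monotone pointwise convergence of continuous functions into uniform convergence on compacts. You missed this soft route entirely.

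That said, the obstruction you identified --- two large jumps momentarily within distance $\eps$ of each other --- is not a technicality: it is quantifiable, and it defeats both your plan and, on inspection, the Dini route. If $u(.,t)$ has two jumps of size at least $h$ at mutual distance $\eps/2$, then for any $w\in{\mathcal P}_\eps$ the constancy interval of $w$ containing the midpoint of the two jumps has length at least $\eps$, hence contains a window of radius $\eps/4$ centered at one of the jumps; on that window the primitive of $w$ is affine while the primitive of $u(.,t)$ has a kink of slope-jump at least $h$, and comparing second differences there gives $4\,\Delta(w,u(.,t))\ge h\eps/4$, i.e.\ $\delta_\eps(t)\ge h/16$. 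Now take TASEP ($G(\rho)=\rho(1-\rho)$, ${\mathcal R}=[0,1]$) with $u_0=\frac12\mathbf 1_{[0,1)}+\mathbf 1_{[1,10)}$: two entropic shocks of size $\frac12$ travel at speeds $\frac12$ and $-\frac12$ and merge at $t^*=1$, so their distance sweeps continuously through every small value, and for every small $\eps$ there is a time $t_\eps<1$ with $\delta_\eps(t_\eps)\ge 1/32$. Hence $\sup_{t\in[0,T]}\delta_\eps(t)$ stays bounded away from $0$ for \emph{all} small $\eps$ once $T\ge t^*$, and no sequence $\eps_n\downarrow 0$ can give uniform convergence: the bumps of $\delta_\eps$ travel toward $t^*$ as $\eps\to0$, so pointwise convergence survives but uniform convergence does not --- which is also exactly why the monotonicity needed for Dini cannot be arranged (extraction enforces $\delta_{\eps_{n+1}}\le\delta_{\eps_n}$ at only finitely many times per step, and the inequality genuinely fails near $t^*$). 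So your instinct about where the difficulty lives is correct, and in fact it exposes a real defect of the statement in this generality, not just of your argument; but judged as a proof of the lemma as stated, your proposal has a genuine gap, and the route you sketch for closing it would not succeed.
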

\begin{proof}{Lemma}{cor_approx}
We first argue that, for every $\eps>0$, $\delta_\eps$ is a
continuous function. Indeed,
%it is known that $t\mapsto u(.,t)$ is continuous from $[0,+\infty)$
%onto $L1_{\rm loc}(\R)$. Besides, by the finite propagation property
%for entropy solutions
 by Proposition \ref{standard}, iii),a) for every $T>0$,
there exists a bounded set $K_T\subset\R$ such that the support of
$u(.,t)$ is contained in $K_T$ for every $t\in[0,T]$. Since
\be\label{delta_inequality}
\Delta(v,w)\leq\int_\R\abs{v(x)-w(x)}dx\ee
 for $v,w\in L^\infty(\R)$ with compact support, it follows by
Proposition \ref{standard}, o)  and Lemma \ref{delta_conv},
(i)  that
\be\label{lim-delta} \lim_{s\to t}\Delta(u(.,s),u(.,t))=0 \ee
for every $t\geq 0$. This and the inequality
\[
\abs{ \delta_\eps(t)-\delta_\eps(s)
}\leq\eps^{-1}\Delta(u(.,s),u(.,t))
\]
imply continuity of $\delta_\eps$.
By Proposition \ref{standard}, i) and ii), $u(.,t)$ has bounded,
finite space variation, and is $\mathcal R$-valued. Hence, by
\cite[Lemma 5.5]{bgrs2},  for any given $\delta>0$,  for $\eps>0$
small enough, there exists an approximation
$u^{\eps,\delta}(.,t)\in{\mathcal P}_\eps$ of $u(.,t)$ with
%the following properties: $u^{\eps,\delta}(.,t)$ is a
%piecewise constant $\mathcal R$-valued function with compact
%support, step lengths at least $\eps$, and
%
$\Delta\left(u^{\eps,\delta}(.,t),u(.,t)\right)\leq \eps\delta $.
This implies $\delta_\eps(t)\to 0$ as $\eps\to 0$ for every $t>0$.
Let $\mathcal T$ be some countable dense subset of $[0,+\infty)$. By
the diagonal extraction procedure, we can find a sequence
$\eps_n\downarrow 0$ such that $\delta_{\eps_n}(t)\downarrow 0$ for
each $t\in{\mathcal T}$. By continuity of $\delta_{\eps}$ we also
have that $\delta_{\eps_n}(t)\downarrow 0$ for every
$t\in[0,+\infty)$. Dini's theorem implies that $\delta_{\eps_n}$
converges uniformly to $0$ on every bounded subset of $[0,+\infty)$.
\end{proof}
\\ \\
We now quote  \cite[Proposition 3.1]{bm}, which yields that
$\Delta$ is an ``almost'' nonincreasing functional for two coupled
particle systems:
\begin{proposition}
\label{lemma_bm}
Assume $p(.)$ is finite range. Then there exist constants $C>0$ and
$c>0$, depending only on $b(.,.)$ and $p(.)$, such that the
following holds. For every  $N\in\N$, $(\eta_0,\xi_0)\in{\bf X}^2$
with
$\abs{\eta_0}+\abs{\xi_0}:=\sum_{x\in\Z}[\eta_0(x)+\xi_0(x)]<+\infty$,
 and every $\gamma>0$, the event
\be\label{bmevent} \forall t>0:
\,\Delta(\alpha^N(\eta_t(\eta_0,\omega)),\alpha^N(\eta_t(\xi_0,\omega)))
\leq \Delta(\alpha^N(\eta_0),\alpha^N(\xi_0))+\gamma
%S(\eta_t(\eta_0,\omega),\eta_t(\xi_0,\omega))\leq
%S(\eta_0,\xi_0)+\gamma
\ee
has $\Prob$-probability at least
$1-C(\abs{\eta_0}+\abs{\xi_0})e^{-cN\gamma}$.%, where
%$\abs{\eta}:=\sum_{x\in\Z}\eta(x)$, and
%
%\be\label{def_bm} S(\eta,\xi):=\sup_{x\in\Z}\abs{\sum_{y\leq
%x}[\eta(y)-\xi(y)]}\ee
%
\end{proposition}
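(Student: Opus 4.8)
The plan is to reduce the sup-CDF distance to a maximal partial sum of microscopic discrepancies and then to control it by tracking these discrepancies under the basic coupling. Write $\eta_t:=\eta_t(\eta_0,\omega)$, $\xi_t:=\eta_t(\xi_0,\omega)$, $g_t(y):=\eta_t(y)-\xi_t(y)$, and $F_t(k):=\sum_{y\le k}g_t(y)$; abbreviate the left-hand side of \eqref{bmevent} by $\Delta_t$. Since both empirical cumulative distribution functions appearing in \eqref{def_delta} are right-continuous step functions with jumps only at the points $y/N$, $y\in\Z$, one has
\[
\Delta_t=N^{-1}\sup_{k\in\Z}\abs{F_t(k)}.
\]
Because $\eta_0,\xi_0$ have finite mass, $g_t$ is finitely supported and $F_t(k)$ is eventually constant as $k\to\pm\infty$, so the supremum is attained and all objects below are well defined. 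It therefore suffices to prove that, off an event of probability at most $C(\abs{\eta_0}+\abs{\xi_0})e^{-cN\gamma}$, one has $\sup_k\abs{F_t(k)}\le\sup_k\abs{F_0(k)}+N\gamma$ for \emph{all} $t>0$ simultaneously.

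First I would use the basic coupling to describe the evolution of $g_t$: at each $\omega$-event the two systems either jump together (leaving $g$ unchanged across the jump) or exactly one of them jumps, and by attractiveness \eqref{attractive_1} the latter can occur only in the presence of a discrepancy among the sites involved. Thus $g_t$ evolves through the motion of signed unit discrepancies (positive where $\eta_t>\xi_t$, negative where $\eta_t<\xi_t$) which, again by \eqref{attractive_1}, annihilate in opposite-sign pairs upon interaction; in particular the number of positive (resp.\ negative) discrepancies is nonincreasing and bounded by $\frac{1}{2}(\abs{\eta_0}+\abs{\xi_0})$. I would label these discrepancies and, invoking the finite-range hypothesis $p(x)=0$ for $\abs{x}>M$ (which makes every discrepancy jump by at most $M$ at uniformly bounded rate), dominate each labelled discrepancy by an auxiliary continuous-time random walk driven by the same clocks, the order of like-signed discrepancies being preserved. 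The quantity $\sup_k\abs{F_t(k)}$ changes only at events where discrepancies of opposite sign come within distance $M$, and each such event moves one discrepancy past at most $MK$ others, so each jump of $\sup_k\abs{F_t(k)}$ is bounded by a constant depending only on $M,K$.

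The heart of the argument is then a large-deviation/martingale estimate. For a single labelled discrepancy I would form an exponential martingale (as in \eqref{girsanov}) associated with its crossings, use the uniform bound on the jump rates to obtain a Cram\'er-type inequality $\Prob(\,\cdot\,)\le Ce^{-cN\gamma}$ for the event that this discrepancy contributes more than its share to an increase of $\sup_k\abs{F_t(k)}$, and promote it to a bound uniform in $t$ via Doob's exponential maximal inequality. A union bound over the at most $\abs{\eta_0}+\abs{\xi_0}$ labelled discrepancies then produces the prefactor $C(\abs{\eta_0}+\abs{\xi_0})$ and the exponential rate $e^{-cN\gamma}$, exactly as demanded by \eqref{bmevent}.

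I expect the genuine difficulty to lie in the combination of the infinite time horizon with the annihilation bookkeeping. Over a bounded time window finite propagation localizes the relevant clocks and discrepancies, but \eqref{bmevent} requires control for \emph{all} $t>0$, while an individual discrepancy trajectory is unbounded in time; the resolution must exploit that opposite discrepancies annihilate \emph{permanently}, so that the total interface activity capable of raising $\sup_k\abs{F_t(k)}$ is ultimately governed by the fixed budget $\frac{1}{2}(\abs{\eta_0}+\abs{\xi_0})$ rather than by the running time. Making the exponential supermartingale structure compatible with this uniform-in-time requirement — essentially a microscopic avatar of the macroscopic contraction \eqref{deltastab} — is the step I would treat most carefully.
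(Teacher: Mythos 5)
Your opening reductions are correct and are indeed the right structural ingredients: the identity $\Delta(\alpha^N(\eta_t(\eta_0,\omega)),\alpha^N(\eta_t(\xi_0,\omega)))=N^{-1}\sup_{k\in\Z}\abs{F_t(k)}$, the description of the coupled evolution through signed discrepancies that move or annihilate in opposite-sign pairs but are never created (a consequence of attractiveness \eqref{attractive_1} under the common graphical construction; minor slip: the number of positive discrepancies is bounded by $\abs{\eta_0}$, not $\tfrac12(\abs{\eta_0}+\abs{\xi_0})$), and the observation that an increase of the running maximum forces a $(+,-)$ pair of discrepancies within distance of order $M$ of the maximizer. Be aware, however, that the paper does not prove this proposition at all: it quotes it from Bramson and Mountford \cite[Proposition 3.1]{bm}, adding in Remark \ref{remark_finiteprop} that the uniformity in $t$ follows from the proof given there. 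Your attempt therefore has to stand as a self-contained substitute for that (substantial) proof, and as such it has a genuine gap.

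The gap is the core estimate, which you explicitly defer (``the step I would treat most carefully'') and for which the scheme you outline cannot work. The total increase of $\sup_k\abs{F_t(k)}$ accumulates over crossings of the running maximizer, a path that is adapted to the same Poisson noise and determined by the joint configuration; it is not a functional of any single discrepancy trajectory, so exponential martingales of the type \eqref{girsanov} (built for currents across deterministic paths or paths independent of $\omega$) do not apply to it, and the event that one discrepancy ``contributes more than its share'' cannot even be formulated trajectory-wise. Worse, the bookkeeping is quantitatively inconsistent with the claimed bound: with $n_0:=\abs{\eta_0}+\abs{\xi_0}$ discrepancies the natural share is $N\gamma/n_0$, and any Cram\'er-type control of a deviation of that size carries the exponent $e^{-cN\gamma/n_0}$, so the union bound yields $Cn_0e^{-cN\gamma/n_0}$ rather than $Cn_0e^{-cN\gamma}$; in the application (proof of Lemma \ref{wearegoing_1}, where \eqref{finite_2} gives $n_0=O(N)$ and $\gamma=\delta\eps$ is fixed) such a bound does not even vanish as $N\to\infty$, let alone sum over $N$ for Borel--Cantelli. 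Conversely, controlling each discrepancy at scale $N\gamma$ only bounds the total by $n_0N\gamma$. The very form of the statement --- prefactor linear in $n_0$, exponent independent of $n_0$ --- forces an argument organized around the running maximum itself: a rate comparison, uniform in the configuration near the maximizer, between maximum-increasing crossings and permanent annihilations, so that each unit increase is paid for out of the finite annihilation budget with uniformly positive conditional probability. That is what \cite{bm} actually carries out, and it requires a further point you gloss over: annihilation of a given pair proceeds only through uncoupled jumps, whose rate is a difference of $b$-values that can vanish for particular local configurations (only monotonicity of $b$ is assumed, and $p$ need not charge the bond joining the pair), so uniform lower bounds on annihilation need (A1), (A3) and multi-step arguments.
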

%
%
% Note that the function $S$ of Proposition \ref{lemma_bm} is
%connected to $\Delta$ defined in \eqref{def_delta} by
%
%\be \label{s_delta}
%\Delta(\alpha^N(\eta),\alpha^N(\xi))=N^{-1}S(\eta,\xi) \ee
%
%\begin{remark} The time uniformity in \eqref{bmevent}
%does not appear in the original  \cite[Proposition 3.1]{bm} but does
%follow from  its proof.\end{remark}
%
  We finally recall the {\em finite propagation} property at
particle level (see \cite[Lemma 5.2]{bgrs2}), which is a microscopic
analogue of Proposition \ref{standard}, iii).
\begin{lemma}
\label{finite_prop} There exist constants $v$ and $C$, depending
only on $b(.,.)$ and $p(.)$, such that the following holds. For any
$x,y\in\Z$, any $(\eta_0,\xi_0)\in{\bf X}^2$, and any
$0<t<(y-x)/(2v)$: if $\eta_0$ and $\xi_0$ coincide on the site
interval $[x,y]$, then  with $\Prob$-probability at least
$1-e^{-Ct}$, $\eta_s(\eta_0,\omega)$ and $\eta_s(\xi_0,\omega)$
coincide on the site interval $[x+vt,y-vt]\cap\Z$ for every
$s\in[0,t]$.
\end{lemma}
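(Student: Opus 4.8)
The plan is to prove Lemma \ref{finite_prop}, the microscopic finite propagation property, by controlling the rate at which discrepancies between two coupled systems can spread outward from the interval where they disagree. Since $\eta_0$ and $\xi_0$ agree on $[x,y]$, and the dynamics is driven by the same Poisson measure $\omega$ in the graphical construction (hence the two systems are automatically coupled), any disagreement at a later time inside $[x+vt,y-vt]$ must have been transported in from outside $[x,y]$. The key observation is that information can only travel inward through jumps that cross the boundary sites $x$ or $y$, and because $p(\cdot)$ has finite range (bounded by $M$, with finite first moment), the maximal speed of this propagation is controlled by a Poisson-type rate.

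First I would reduce matters to tracking the \emph{rightmost disagreement coming from the left} and the \emph{leftmost disagreement coming from the right}. Concretely, define a boundary process $X_t$ started at $x$ (and symmetrically $Y_t$ started at $y$) that jumps rightward whenever an $\omega$-event could carry a discrepancy across the current boundary; by the range bound, each such jump is at most $M$ and occurs at total rate at most $\Lambda := \|b\|_\infty \sum_{|z|\le M} p(z)\cdot(\text{const})$, since only finitely many $(y',z)$ pairs with $|z|\le M$ can affect a given site. I would dominate $X_t - x$ stochastically by $M\,N_t$, where $N_t$ is a Poisson process of intensity $\Lambda$. The event that the two systems still coincide on $[x+vt,\,y-vt]$ throughout $[0,t]$ is then implied by the event that neither boundary process has advanced past $vt$ by time $t$, i.e. $\{X_t \le x+vt\}\cap\{Y_t \ge y-vt\}$.

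The main estimate is therefore a Poisson large-deviation bound: for $v$ chosen strictly larger than the mean speed $M\Lambda$, one has $\Prob(M N_t > vt) \le e^{-Ct}$ for some $C=C(b,p)>0$, by the standard exponential Chebyshev bound $\Prob(N_t > vt/M) \le \exp\{-t[\,(v/M)\log(v/(M\Lambda)) - (v/M) + \Lambda\,]\}$, whose exponent is strictly positive once $v > M\Lambda$. Combining the two boundary estimates and a union bound yields the claimed probability at least $1 - e^{-Ct}$ (absorbing the factor $2$ into the constants by adjusting $C$, or keeping it explicit).

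The step I expect to be the main obstacle is making the stochastic domination of the discrepancy-boundary by a single Poisson process fully rigorous, because a jump from $y'$ to $y'+z$ can simultaneously create a discrepancy at a site \emph{behind} the nominal boundary and the rate of a boundary-crossing jump depends on the current (unknown) configurations of both systems. I would handle this by using monotonicity of the graphical construction and the fact that the jump rate $b$ is bounded by $\|b\|_\infty$ uniformly in the configuration, so the boundary dynamics is dominated by a configuration-independent Poisson clock regardless of the states; the finite-range assumption guarantees that a single $\omega$-event displaces the discrepancy front by at most $M$, which is what makes the linear-speed bound valid. Once this domination is set up, the remainder is the routine large-deviation computation above, and the constants $v$ and $C$ depend only on $b$ and $p$ as required.
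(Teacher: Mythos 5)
Your proof is correct and is essentially the standard argument behind this lemma: in the graphical construction a new discrepancy can only be created by an $\omega$-event straddling the current discrepancy front, so each front advances by at most $M$ per event at a configuration-independent rate bounded by $||b||_\infty\sum_z\abs{z}p(z)$, is therefore dominated by $M$ times a Poisson process, and a Chernoff bound gives the exponential estimate --- uniformly over $s\in[0,t]$, since the fronts are monotone in time. Note that the paper itself contains no proof of this statement (it is quoted from \cite[Lemma 5.2]{bgrs2}, with Remark \ref{remark_finiteprop} observing that time uniformity follows from that proof), and your front-domination argument is precisely the type of proof behind the cited result; the only remaining points are routine bookkeeping (the exact value of the rate $\Lambda$, and absorbing the union-bound factor $2$ into the constants, which for small $t$ uses the fact that at least one straddling event must occur at all).
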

\begin{remark}\label{remark_finiteprop}
 The time uniformity in Proposition \ref{lemma_bm} and Lemma
\ref{finite_prop} does not appear in the original statements
(repectively, \cite[Proposition 3.1]{bm} and \cite[Lemma
5.2]{bgrs2}), but follows in each case from the proof.
\end{remark}
%
%
%The exponential estimate in the two results above is crucial to
%derive a.s. hydrodynamics.
%
%
\subsection{Proof of Theorem \ref{th:hydro}}
\subsubsection{Simplified initial conditions}
\label{proof_simplified}
We will first prove Theorem \ref{th:hydro}  under the simplifying
assumptions:
\be\label{finite_0}u_0  \mbox{ is a.e. } {\mathcal
R}\mbox{-valued}\ee
\be\label{finite_00} {\rm TV} u_0  <  +\infty
\ee
and there exists $R>0$ (independent of $N$) such that
\be\label{finite_1} {\rm supp}\,u_0 \subset [-R,R]\ee
\be\label{finite_2} \forall
N\in\N\mbox,\quad\Prob_0\left(\eta^N_0(x)=0\mbox{ whenever
}x\in\Z,\,\abs{x}\geq RN\right)=1\ee
%
%
%holds  $\Prob_0$-a.s. for every $N\in\N$.
%
The essential part of the work  (that is, the approximation scheme)
is contained here, and the proof under general assumptions
will follow in  Subsection \ref{General case}  by approximation arguments.\\ \\
Assumption \eqref{finite_0} implies by Proposition \ref{standard},
i), that $u(.,t)$ is $\mathcal R$-valued, \eqref{finite_00} by
Proposition \ref{standard}, ii), that $u(.,t)$ has finite variation
for $t>0$, and \eqref{finite_1} by Proposition \ref{standard}, iii),
that $u(.,t)$ is supported on $[-(R+Vt),R+Vt]$. In the sequel we
abbreviate, for $(\omega_0,\omega)\in\widetilde\Omega$
$$
\eta^N_t=\eta_{t}(\eta^N_0(\omega_0),\omega)
$$
We consider the random process on $\widetilde{\Omega}$
\[
\Delta^N(t):=\Delta(\alpha^N(\eta^N_{Nt}),u(.,t))
\]
%
%on $\widetilde{\Omega}$. In the sequel we shall simply write
%$\Delta^N(t)$.
By  initial assumption \eqref{initial_profile_vague} and (i) of
Lemma \ref{delta_conv}, $\Delta^N(0)$ converges to $0$,
$\Prob_0$-a.s. Fix an arbitrary time $T>0$. We are going to prove
that
\be \label{wearegoing}
\lim_{N\to\infty}\sup_{t\in[0,T]}\Delta^N(t)=0,\quad\widetilde{\Prob}\mbox{-a.s.}
\ee
 Then one can find a set of probability one on which this
holds simultaneously for all $T>0$. Theorem \ref{th:hydro} follows
from (ii) of
Lemma \ref{delta_conv}.\\ \\
Let $\eps=\eps_n$ be given by Lemma \ref{cor_approx}, and
$\delta=\delta_n= 2 \sup_{t\in[0,T]}\delta_{\eps_n}(t)$,  so that
$\delta_n\to 0$ as $n\to\infty$. In the sequel, for notational
simplicity, we omit mention of $n$.
We fix a time discretization step
\be \label{time_step} \eps'=\eps\min((2v)^{-1},(2V)^{-1}) \ee
where $V$ and $v$ are the constants defined in Proposition
\ref{standard} and Lemma \ref{finite_prop}. Let $t_k=k\eps'$, for
 $k\le {\mathcal K}:=[T/\eps']$, $t_{{\mathcal K}+1}=T$. The
main step to derive \eqref{wearegoing} is to obtain a time
discretized version  of it, namely

\begin{lemma}\label{wearegoing_1}
\[%be
\limsup_{N\to\infty}\sup_{k=0,\ldots,{\mathcal
K}-1}\left[\Delta^N(t_{k+1})-\Delta^N(t_k)\right]\leq
3\delta\eps,\quad\widetilde{\Prob}\mbox{-a.s.} \]%ee
\end{lemma}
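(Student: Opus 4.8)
The plan is to bound the one-step increment $\Delta^N(t_{k+1})-\Delta^N(t_k)$ by inserting, as an intermediary between the microscopic configuration at time $t_{k+1}$ and the macroscopic entropy solution $u(\cdot,t_{k+1})$, a carefully chosen piecewise-constant $\mathcal{R}$-valued profile that approximates $u(\cdot,t_k)$ at the previous time step. Concretely, for each $k$ I would pick $u^{k}(\cdot)\in\mathcal{P}_\eps$ with $\Delta(u^k,u(\cdot,t_k))\le\eps\delta_\eps(t_k)\le\eps\delta/2$ (this is possible by Lemma \ref{cor_approx}, by definition of $\delta_\eps$). Because the step lengths of $u^k$ are at least $\eps$ and the time increment $\eps'$ satisfies $\eps'\le\eps/(2V)$, Proposition \ref{standard} iv) tells us that the entropy solution issued from $u^k$ at the single time step $\eps'$ is an \emph{exact superposition of non-interacting Riemann waves}, one centered at each discontinuity of $u^k$. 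This is the heart of the Glimm-scheme idea: I can treat the evolution over one step as locally Riemann.

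Next I would use the triangle inequality for $\Delta$ to split the increment into three pieces:
\be\nonumber
\Delta^N(t_{k+1})-\Delta^N(t_k)\le
\underbrace{\Delta(\alpha^N(\eta^N_{Nt_{k+1}}),v^k(\cdot,\eps'))}_{\text{(I) microscopic}\to\text{macroscopic over one step}}
+\underbrace{\Delta(v^k(\cdot,\eps'),u(\cdot,t_{k+1}))}_{\text{(II) PDE stability}}
-\Delta^N(t_k)+\text{(initial matching)},
\ee
where $v^k$ denotes the entropy solution started from $u^k$. For term (II), since $v^k(\cdot,0)=u^k$ approximates $u(\cdot,t_k)$ and both evolve by the same conservation law, the $L^1$-contraction / $\Delta$-contraction of Proposition \ref{standard} iii)(b) gives $\Delta(v^k(\cdot,\eps'),u(\cdot,t_{k+1}))\le\Delta(u^k,u(\cdot,t_k))\le\eps\delta/2$. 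For term (I), I would couple the true particle system $\eta^N_{Nt_k}$ with an auxiliary system started (on scale $N$) from the piecewise-constant profile $u^k$: Proposition \ref{lemma_bm} controls $\Delta$ between the two \emph{coupled} particle evolutions uniformly in time up to an error $\gamma$, with probability at least $1-C(\cdots)e^{-cN\gamma}$, and then Corollary \ref{corollary_2_2} (Riemann a.s.\ hydrodynamics, applied locally near each jump of $u^k$ via the space-time–shifted currents, using finite propagation Lemma \ref{finite_prop} to localize) identifies the macroscopic limit of the auxiliary system with $v^k(\cdot,\eps')$. The finite-propagation localization is exactly why the shifted version (nonzero $\alpha,\beta$) of Proposition \ref{proposition_2_2} was needed.

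The \textbf{main obstacle} will be converting these per-step, per-$k$ estimates into a statement that holds \emph{simultaneously for all} $k=0,\dots,\mathcal{K}-1$ and \emph{almost surely}, uniformly as $N\to\infty$. Because $\mathcal{K}=[T/\eps']$ grows, I cannot simply union-bound over $k$ with a fixed-probability estimate; this is precisely where the exponential bounds matter. I would choose $\gamma=\eps\delta$ (say) in Proposition \ref{lemma_bm} and in Lemma \ref{finite_prop}, so that each bad event has probability exponentially small in $N$; since the number of Riemann waves per step and the number of steps $\mathcal{K}$ are polynomially bounded in $N$ (or fixed independent of $N$ for fixed $\eps$), a Borel--Cantelli argument over $N$ kills all the bad events almost surely. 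The delicate point is keeping track of how the Riemann convergence of Corollary \ref{corollary_2_2}, which is an a.s.\ statement for a \emph{fixed} initial shift, transfers to the space-time shifts $\theta'_{[Nx_0],Ns_0}$ appearing at each grid point $x_k$ and each time $t_k$; here I would rely on the fact that Corollary \ref{corollary_2_2}(i) already provides a single exceptional null set valid for \emph{all} shift parameters simultaneously. Assembling these ingredients, the total increment is bounded by $\eps\delta/2+\eps\delta/2+(\text{error }\le 2\eps\delta)\le 3\delta\eps$, which is the claimed bound.
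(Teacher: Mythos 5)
Your overall architecture coincides with the paper's: the same triangle-inequality decomposition of $\Delta^N(t_{k+1})-\Delta^N(t_k)$ into a coupling term, a ``microscopic Riemann'' term and a PDE-stability term; the $\Delta$-contraction of Proposition \ref{standard} iii)(b) for the last term; Proposition \ref{lemma_bm} with $\gamma$ of order $\delta\eps$ plus Borel--Cantelli for the first; and local Riemann hydrodynamics assembled through Lemma \ref{finite_prop} and Proposition \ref{standard} iv) for the middle one. However, there is a genuine gap at precisely the step that carries the whole proof: your auxiliary system is only described as ``started (on scale $N$) from the piecewise-constant profile $u^k$'', and you then assert that Corollary \ref{corollary_2_2} identifies its limit. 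Corollary \ref{corollary_2_2} does \emph{not} apply to an arbitrary microscopic version of a step profile; it applies only to initial data of the very special form $T(\eta^{\lambda},\eta^{\rho})$ with $(\eta^\lambda,\eta^\rho)\sim\bar{\nu}^{\lambda,\rho}$, space-time shifted by $\theta'_{[Nx_0],Ns_0}$ --- that is, to configurations obtained by gluing, at site $[Nx_{k,l}]$ and time $Nt_k$, the evolutions \emph{from time $0$ under the same Poisson realization $\omega$} of equilibrium configurations $\eta^{r_{k,l-1}}(\omega_0)$ and $\eta^{r_{k,l}}(\omega_0)$. This is exactly what the paper's definitions \eqref{def_xinkl}--\eqref{def_xink} and the identity \eqref{tau_x-tau_t-xi} provide. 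A deterministic discretization of $u^k$ (the natural reading of your phrase) is not covered --- a.s.\ hydrodynamics for arbitrary microscopic data is Theorem \ref{th:hydro} itself, so invoking it would be circular --- and inserting \emph{fresh} equilibrium configurations at time $Nt_k$ fails too, since the time shift then depends on $N$ and almost sure convergence does not transfer along $N$-dependent shifts (this is the obstruction of Appendix \ref{remarks_sub}, the very reason the nonzero-$(\alpha,\beta)$ Riemann result was needed). Note also that with the correct construction the ``initial matching'' $\alpha^N(\xi^{N,k})\to v_k(.)dx$ is itself not automatic: it is \eqref{profile_xi}, which requires the $\lambda=\rho$ case of Corollary \ref{corollary_2_2} (equation \eqref{shift_eq}), a point absent from your argument.

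A second, minor, inaccuracy: you rely on Corollary \ref{corollary_2_2}(i) ``already providing a single exceptional null set valid for all shift parameters simultaneously''. It does not: the statement is, for each fixed $(t,s_0,x_0)$, an almost sure convergence, with a null set a priori depending on the shift (the single-null-set improvement in its proof concerns the interval endpoints $a,b$, not $(x_0,s_0)$). This is harmless here --- for fixed $\eps=\eps_n$ only finitely many shifts $([Nx_{k,l}],Nt_k)$, hence countably many over all $n$, are used, so a countable union of null sets suffices --- but the argument should say so rather than appeal to a uniformity the corollary does not assert.
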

The second, more technical step, will be to fill in the gaps between
discretized times by a uniform estimate for the time modulus of
continuity, that is

\begin{lemma}\label{wearegoing_2}
\[%be
\lim_{\eps=\eps_n\to
0}\limsup_{N\to\infty}\sup_{k=0,\ldots,{\mathcal
K}}\sup_{t\in[t_k,t_{k+1}]}\Delta\left[
\alpha^N(\eta^N_{Nt}),\alpha^N(\eta^N_{Nt_k})\right]=0,
\quad\widetilde{\Prob}\mbox{-a.s.} \]%ee
\end{lemma}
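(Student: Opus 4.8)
The plan is to prove Lemma \ref{wearegoing_2} by bounding the time modulus of continuity of the empirical measure through the particle current, and then controlling that current uniformly in time by a combination of the finite propagation property (Lemma \ref{finite_prop}) and large-deviation estimates. The key observation is that the change in the cumulative distribution $\alpha^N(\eta^N_{Nt})((-\infty,x])$ over a macroscopic time increment $t-t_k\le \eps'$ is exactly the net particle current across the moving boundary $Nx$, so $\Delta[\alpha^N(\eta^N_{Nt}),\alpha^N(\eta^N_{Nt_k})]$ is controlled by the maximal current fluctuation over the time window $[Nt_k, Nt]$ and over all spatial cutoffs $x$.

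First I would write, for each fixed $x\in\R$,
\[
\alpha^N(\eta^N_{Nt})((-\infty,x])-\alpha^N(\eta^N_{Nt_k})((-\infty,x])
= N^{-1}\left[\varphi^{x_\cdot}_{Nt_k}(\eta^N_0,\omega)-\varphi^{x_\cdot}_{Nt}(\eta^N_0,\omega)\right]
\]
where $x_\cdot$ is the fixed observer at position $[Nx]$, using the current definition \eqref{def_current} and the telescoping identity \eqref{difference}. Taking the supremum over $x\in\R$ gives the $\Delta$-quantity. The finite propagation property (Proposition \ref{standard}, iii) at the macroscopic level, together with its microscopic counterpart Lemma \ref{finite_prop}) confines the relevant support to a bounded window $[-(R+VT),R+VT]$, so the supremum over $x$ reduces to finitely many sites of order $N$. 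I would then bound the current increment over a single time step by the total number of $\omega$-events (particle jumps) across site $[Nx]$ during $[Nt_k,Nt]$; since each crossing changes the count by at most $K$ and the jump rate is bounded by $\|b\|_\infty$, the expected number of such crossings over a window of macroscopic length $\eps'$ is of order $N\eps'$.

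The core estimate is an exponential (Poisson-type) large-deviation bound: the number of $\omega$-events in a fixed space-time box of macroscopic volume $\eps'$ is a Poisson-dominated random variable, so
\[
\widetilde\Prob\left(\sup_{t\in[t_k,t_{k+1}]}\Delta[\alpha^N(\eta^N_{Nt}),\alpha^N(\eta^N_{Nt_k})]>\eta\right)
\le C N\, e^{-cN\eta}
\]
for a suitable rate $c>0$, uniformly over the finitely many discretized steps $k\le\mathcal K$ and finitely many spatial cutoffs. Summing over $k$ and over $x$ (both $O(N)$ in number) and applying Borel–Cantelli yields that, $\widetilde\Prob$-a.s., the left-hand supremum is eventually below any $\eta$ proportional to $\eps'$, hence proportional to $\eps=\eps_n$. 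Letting $\eps_n\to 0$ then gives the claim. This mirrors the exponential-bound strategy already used for the martingales $M^{x,v}_t$ in \eqref{expo_bound}--\eqref{deviation}, so I would invoke the same Cramér-type argument applied to the Poisson counting variable rather than to a compensated martingale.

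The main obstacle will be making the estimate genuinely uniform in $t$ over the whole interval $[t_k,t_{k+1}]$, rather than merely at the endpoints: the current $\varphi^{x_\cdot}_{Nt}$ must be controlled as a process in $t$, not pointwise. I expect to handle this by the standard observation that $t\mapsto\varphi^{x_\cdot}_{Nt}$ is monotone in the number of accumulated crossings (each rightward or leftward crossing is counted once and never undone within a fixed spatial cutoff), so the supremum over $t\in[t_k,t_{k+1}]$ is attained through the total event count in the box and requires no additional maximal inequality; the pathwise bound on $\tilde\varphi$ in \eqref{self_motion} is trivial here since the observer is stationary. A secondary subtlety is that $\eta^N_0$ is random (on $\Omega_0$) and may be correlated with $\omega$ only through the independent product structure $\widetilde\Prob=\Prob_0\otimes\Prob$; since the Poisson event count in a box is independent of $\eta^N_0$, the large-deviation bound holds conditionally on $\eta^N_0$ with a rate independent of the configuration, so integrating over $\Prob_0$ preserves the estimate. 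Once these two points are settled, the Borel–Cantelli conclusion is routine.
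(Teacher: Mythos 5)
Your route is sound, but it is genuinely different from the paper's. The paper argues in Lagrangian coordinates: it labels the particles present at time $Nt_k$ and proves the combinatorial estimate
\[
\Delta\left(\alpha^N(\eta^N_{Ns}),\alpha^N(\eta^N_{Nt})\right)\leq K N^{-1}\sup_{i\in I}\abs{R^{k,i}_{Ns}-R^{k,i}_{Nt}},
\]
so that $\Delta$ is controlled by the maximal displacement of a single labelled particle over the time step; each displacement is then sandwiched, as in the proof of Lemma \ref{lemma_2_2_2_1}, between two one-sided random walks, $Q^{k,i}_{Nt}\leq R^{k,i}_{Nt}-R^{k,i}_{Nt_k}\leq S^{k,i}_{Nt}$, whose monotonicity in $t$ gives the supremum over $t\in[t_k,t_{k+1}]$ for free; random-walk large deviations, a union bound over the $O(N)$ particle labels (finite by \eqref{finite_2}), and Borel--Cantelli then yield $\limsup_N\sup_{k,t}\Delta\leq C\eps$ a.s., and $\eps_n\to 0$ concludes. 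You argue instead in Eulerian coordinates: $\Delta$ is a supremum over cut points $x$ of a net current across $[Nx]$; the current increment is dominated by the raw number of Poisson events straddling $[Nx]$ in the time window --- a quantity that is monotone in $t$ (your substitute for the one-sided walks), independent of the configuration, and Poisson with mean $O(N\eps')$ --- and Poisson large deviations plus a union bound over sites and Borel--Cantelli give the same conclusion. The two proofs share the same skeleton (monotone majorant in $t$, exponential bounds uniform over $O(N)$ objects, Borel--Cantelli, then $\eps_n\to0$); yours trades the coupling construction borrowed from Lemma \ref{lemma_2_2_2_1} for a cruder but configuration-free Poisson bound.

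The one step where your proof needs a real argument you have not supplied is the spatial confinement that reduces $\sup_{x\in\R}$ to $O(N)$ \emph{deterministic} sites. This is not cosmetic: the raw Poisson crossing count has the same law at every site of $\Z$, so without confinement your union bound runs over infinitely many sites and the estimate collapses. The paper never meets this problem, because its union bound runs over particle labels rather than positions. Moreover, the references you invoke do not give confinement as stated: Proposition \ref{standard}, iii) concerns the entropy solution, not the particle system, and Lemma \ref{finite_prop} only compares two evolutions on a finite site interval --- it does not by itself say that particles stay in a window. The fix is to compare $\eta^N_0$ with the empty configuration (they coincide off $[-RN,RN]$ by \eqref{finite_2}) and apply Lemma \ref{finite_prop} with $[x,y]=[RN,RN+LN]$, $L>2vT$, $t=NT$: on the good event, of probability at least $1-e^{-CNT}$, the evolution of $\eta^N_0$ coincides on the buffer $[RN+vNT,RN+LN-vNT]$, for all $s\in[0,NT]$, with that of the empty configuration, which stays empty by assumption \emph{(A3)}; since jumps have range at most $M$, the first particle ever to pass $RN+vNT$ would have to land inside this buffer (provided $LN-2vNT>M$), a contradiction, and similarly on the left. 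The exceptional probabilities $e^{-CNT}$ are summable in $N$, so Borel--Cantelli applies and a.s. for large $N$ all particles remain in a window of width $O(N)$ up to time $NT$. With this step inserted, your proof is complete.
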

By o) of Proposition \ref{standard},
$t\mapsto u(t,.)$ is uniformly continuous from $[0,T]$ to $L^1_{\rm
loc}(\R)$. This and \eqref{delta_inequality} imply an analogue of
Lemma \ref{wearegoing_2} at the level of entropy solutions, namely
\be\label{wearegoing_3} \lim_{\eps=\eps_n\to
0}\sup_{k=0,\ldots,{\mathcal
K}}\sup_{t\in[t_k,t_{k+1}]}\Delta(u(.,t),u(.,t_k))=0\ee
Then \eqref{wearegoing} follows from  Lemma \ref{wearegoing_1},
Lemma \ref{wearegoing_2} and \eqref{wearegoing_3}.\\
\begin{proof}{Lemma}{wearegoing_1}   The method is to
approximate $u(.,t_k)$ by an ${\mathcal R}$-valued step function
$v_k(.)$,
%(and to do the same at the particle level),
to associate to the profile $v_k(.)$ a sequence of configurations
$(\xi^{N,k})_N$ (in the sense \eqref{profile_xi} below), to use
Riemann hydrodynamics for these approximations from time $t_k$ up to
time $t_{k+1}$, and to show that approximated systems at time
$t_{k+1}$ are close enough to the original ones, both at
microscopic and macroscopic levels.\\ \\
Let $v_k(.)$ be an approximation of $u(.,t_k)$ given by Lemma
\ref{cor_approx}, so that
\be \label{uniform_approx}
\Delta(u(.,t_k),v_k(.))\leq\delta\eps,\quad k=0,\ldots,{\mathcal
K}-1 \ee
We write $v_k(.)$ as
\be \label{decomp_approx} v_k=\sum_{l=0}^{l_k}r_{k,l}{\bf
1}_{[x_{k,l},x_{k,l+1})} \ee
where
$ -\infty=x_{k,0}<x_{k,1}<\ldots<x_{k,l_k}<x_{k,l_k+1}=+\infty$,
$r_{k,l}\in{\mathcal R}$, $r_{k,0}=r_{k,l_k}=0$, and, for $1<l\le
l_k$,
\be \label{where} x_{k,l}-x_{k,l-1}\geq\eps \ee
For $t_k\leq t< t_{k+1}$, we denote by $v_k(.,t)$ the entropy
solution to \eqref{hydrodynamics} at time $t$ with Cauchy datum
$v_k(.)$. For $l=1,\ldots,l_k$, define on $\widetilde{\Omega}$ the
%equilibrium configuration
%
%\be \label{def_zetankl}
%\zeta^{N,k,l}(\omega_0,\omega)=\eta_{Nt_k}(\eta^{r_{k,l}}(\omega_0),\omega)
%\ee
%
%and the particle
configurations $\xi^{N,k,l}(\omega_0,\omega)$ and
$\xi^{N,k}(\omega_0,\omega)$ by
\be \label{def_xinkl} \xi^{N,k,l}(\omega_0,\omega)(x):=\left\{
\ba{lll} \eta_{Nt_k}(\eta^{r_{k,l-1}}(\omega_0),\omega)(x) &
\mbox{if} & x<[Nx_{k,l}]\\
\eta_{Nt_k}(\eta^{r_{k,l}}(\omega_0),\omega)(x) & \mbox{if} &
x\geq[Nx_{k,l}]
\ea
\right. \ee
\be \label{def_xink}
\xi^{N,k}(\omega_0,\omega)(x):=\eta_{Nt_k}(\eta^{r_{k,l}}(\omega_0),
\omega)(x),\, \mbox{ if }\, [Nx_{k,l}]\leq x<[Nx_{k,l+1}] \ee
so that  $\xi^{N,k}(\omega_0,\omega)$ has finitely many particles,
and
\be \label{notice_xi} \xi^{N,k}(x)=\xi^{N,k,l}(x),\,\mbox{ if }\,
[Nx_{k,l-1}]\leq x< [Nx_{k,l+1}] \ee
Moreover by the commutation property \eqref{mapping_shift},
\beq\label{tau_x-xi}
&&\tau_{[Nx_{k,l}]}\xi^{N,k,l}(\omega_0,\omega)\\&=&
T(\eta_{Nt_k}(\tau_{[Nx_{k,l}]}\eta^{r_{k,l-1}}(\omega_0),
\theta_{[Nx_{k,l}],0}\omega),\eta_{Nt_k}(\tau_{[Nx_{k,l}]}\eta^{r_{k,l}}(\omega_0),
\theta_{[Nx_{k,l}],0}\omega))\nonumber \eeq
Evolutions from \eqref{def_xinkl}--\eqref{def_xink} are denoted by
\beq
%\label{def_xinkt}
\xi^{N,k}_t(\omega_0,\omega)&=&\eta_{t}(\xi^{N,k}(\omega_0,\omega),
\theta_{0,Nt_k}\omega)\nonumber
\\
\label{def_xinklt}
\xi^{N,k,l}_t(\omega_0,\omega)&=&\eta_{t}(\xi^{N,k,l}(\omega_0,\omega),
\theta_{0,Nt_k}\omega) \eeq
so that by the Markov property \eqref{mapping_markov} and
\eqref{tau_x-xi}, we have
\beq\label{tau_x-tau_t-xi}
&&\tau_{[Nx_{k,l}]}\xi^{N,k,l}_t(\omega_0,\omega)\\&=&
T(\eta_t(\tau_{[Nx_{k,l}]}\eta^{r_{k,l-1}}(\omega_0),
\theta_{[Nx_{k,l}],Nt_k}\omega),\eta_t(\tau_{[Nx_{k,l}]}\eta^{r_{k,l}}(\omega_0),
\theta_{[Nx_{k,l}],Nt_k}\omega))\nonumber \eeq
We claim that
\be \label{shift_eq}
\lim_{N\to\infty}\alpha^N(\eta_{Nt_k}(\eta^{r_{k,l}}(\omega_0),\omega))(dx)
=r_{k,l}dx,\quad\widetilde{\Prob}\mbox{-a.s.} \ee
For $k=0$, this follows from Proposition \ref{Rezakhanlou}, since
$\eta^{r_{k,l}}(\omega_0)\sim\nu^{r_{k,l}}$. For
$k=1,\ldots,{\mathcal K}-1$, this follows from Corollary
\ref{corollary_2_2} with $\lambda=\rho=r_{k,l}$ and $s_0=x_0=0$.
Indeed, on the one hand we have $R_{r_{k,l},r_{k,l}}(.,.)\equiv
r_{k,l}$; on the other hand, if
$\omega'=(\eta,\xi,\omega)\sim\bar{\nu}^{r_{k,l},r_{k,l}}\otimes\Prob$,
we have $\eta=\xi$ a.s., so that
$\beta^N_{Nt_k}(\omega')=\alpha^N(\eta_{Nt_k}(\eta,\omega))$ a.s.,
with $(\eta,\omega)\sim\nu^{r_{k,l}}\otimes\Prob$.\\ \\
By \eqref{def_xink}, for every continuous function $\psi$ on $\R$
with compact support,
\begin{eqnarray*}
&&\int_\R\psi(x)\alpha^N(\xi^{N,k}(\omega_0,\omega))(dx)\\ \quad& =
& \sum_{l=1}^{l_k}\int_\R\psi(x){\bf
1}_{[x_{k,l},x_{k,l+1})}(x)\alpha^N(
\eta_{Nt_k}(\eta^{r_{k,l}}(\omega_0),\omega))
(dx) +  O(1/N)
\\
\quad & \stackrel{N\to\infty}{\longrightarrow} &
\int_\R\psi(x)v_k(x)dx,\quad\widetilde{\Prob}\mbox{-a.s.}
\end{eqnarray*}
where the convergence follows from \eqref{shift_eq} and
\eqref{decomp_approx}. Hence,
\be \label{profile_xi}
\lim_{N\to\infty}\alpha^N(\xi^{N,k}(\omega_0,\omega))(dx)
=v_k(.)dx,\quad\widetilde{\Prob}\mbox{-a.s.} \ee
that is, $\xi^{N,k}$ is a microscopic version of $v_k(.)$.
%
%This
%combined with \eqref{uniform_approx} and Lemma \ref{delta_conv}
%, (i)  gives
%
%\be\label{decomp_delta_0} \limsup_{N\to\infty}\left\{\Delta\left[
%\alpha^N( \eta^N_{Nt_{k}} ),\alpha^N( \xi^{N,k} )
%\right]-\Delta^N(t_k)\right\}\leq
%\delta\eps,\quad\widetilde{\Prob}\mbox{-a.s.} \ee
%
%
For $k=0,\ldots,{\mathcal K}-1$, we write  (remember that
$\eps'=t_{k+1}-t_k$)
\beq
\Delta^N(t_{k+1})-\Delta^N(t_k) & \leq & \Delta\left[ \alpha^N(
\eta^N_{Nt_{k+1}} ),\alpha^N( \xi^{N,k}_{N\eps'} )
\right]-\Delta^N(t_k)\nonumber\\
& + & \Delta\left[ \alpha^N( \xi^{N,k}_{N\eps'}
),v_k(.,\eps') \right]\nonumber\\
& + & \Delta(v_k(.,\eps'),u(.,t_{k+1}))\label{decomp_delta}
\eeq
By \eqref{uniform_approx} and iii), b) of Proposition
\ref{standard},
\be \label{decrease_delta} \Delta(v_k(.,t_{k+1}
-t_k),u(.,t_{k+1}))\leq\Delta(v_k(.),u(.,t_k))\leq\delta\eps \ee
%
%which gives
 is  a bound for the third term on the r.h.s. of
\eqref{decomp_delta}. For the first term, we define the event

\[%be \label{event_bm}
E^{N,k}:=\left\{
\Delta\left[ \alpha^N( \eta^N_{Nt_{k+1}} ),\alpha^N(
\xi^{N,k}_{N\eps'} ) \right]\leq
\Delta\left[ \alpha^N( \eta^N_{Nt_{k}} ),\alpha^N( \xi^{N,k} )
\right]+\delta\eps
\right\} \]%ee
By  assumption  \eqref{finite_2} and Proposition \ref{lemma_bm},
%, \eqref{s_delta}
%
\[%be \label{proba_bm}
\widetilde{\Prob}(\widetilde{\Omega}-E^{N,k})\leq
C'Ne^{-cN\delta\eps} \]%ee
for some constant $C'$ independent of $k$. Thus, by Borel-Cantelli's
lemma, there exists a random $N_1(\omega_0,\omega)$ such that
$\widetilde{\Prob}\mbox{-a.s.}$, $E^{N,k}$ holds for all $N\geq N_1$
and $k=0,\ldots,{\mathcal K}-1$.
On the other hand,
$$
\Delta\left[ \alpha^N( \eta^N_{Nt_{k}} ),\alpha^N( \xi^{N,k} )
\right] \leq \Delta^N(t_k)+
\Delta(u(.,t_k),v_k(.))+\Delta\left[v_k(.),\alpha^N(\xi^{N,k})\right]
$$
Thus, by \eqref{uniform_approx}, \eqref{profile_xi} and (i) of Lemma
\ref{delta_conv},
$$
\limsup_{N\to\infty}\left\{\Delta\left[ \alpha^N( \eta^N_{Nt_{k}}
),\alpha^N( \xi^{N,k} ) \right]-\Delta^N(t_k)\right\}\leq\delta\eps
$$
with $\widetilde{\Prob}$-probability $1$.
Therefore,   $\widetilde{\Prob}$-a.s., for
$k=0,\ldots,{\mathcal K}-1$,
\be\label{decomp_delta_1} \limsup_{N\to\infty}\left\{\Delta\left[
\alpha^N( \eta^N_{Nt_{k+1}} ),\alpha^N( \xi^{N,k}_{N\eps'} )
\right]-\Delta^N(t_k)\right\}\leq 2\delta\eps \ee
%
%
%which gives
is a bound for the first term on the r.h.s. of \eqref{decomp_delta}.
We finally bound the second term on the r.h.s. of
\eqref{decomp_delta}.
For  $k=0,\ldots,{\mathcal K}-1$ and  $l=1,\ldots,l_k$, by
 the
respective definitions \eqref{extended_shift},
\eqref{def_empirical_shift} of $\theta'_.$, $\beta^N_.$, and
\eqref{def_xinkl}, \eqref{def_xinklt}, \eqref{tau_x-tau_t-xi} we
have
\begin{eqnarray}
 & & ( \tau_{-[Nx_{k,l}]/N})
 \alpha^N\left(\xi^{N,k,l}_{Nt}(\omega_0,\omega)\right)=
 \alpha^N\left(( \tau_{[Nx_{k,l}]})\xi^{N,k,l}_{Nt}(\omega_0,\omega)\right) \nonumber\\
& = &\beta^N_{Nt}\circ\theta'_{[Nx_{k,l}],Nt_k}
(\eta^{r_{k,l-1}}(\omega_0),\eta^{r_{k,l}}(\omega_0),\omega)\label{xi_riemann}
\end{eqnarray}
for any $t\ge 0$. This and Corollary \ref{corollary_2_2} imply
\be\label{local_riemann}
\lim_{N\to\infty}\alpha^N\left(\xi^{N,k,l}_{Nt}\right)
=R_{r_{k,l-1},r_{k,l}}(.-x_{k,l},t)dx,\quad
\widetilde{\Prob}\mbox{-a.s.} \ee
Let us consider the events
\[  F^{N,k,l}:=\left\{
\xi^{N,k}_{N\eps'}(x)=\xi^{N,k,l}_{N\eps'}(x),\, \forall
x\in\Z\cap\left[ N(x_{k,l-1}+v\eps'), N(x_{k,l+1}-v\eps') \right)
\right\}\] %\label{event_finite_prop}\end{eqnarray*}
By \eqref{notice_xi}, the definition \eqref{time_step} of $\eps'$,
\eqref{where} and Lemma \ref{finite_prop}, we have
\[%be \label{proba_finite_prop}
\widetilde{\Prob}\left(\widetilde{\Omega}-F^{N,k,l}\right)\leq
e^{-CN\eps'} \]%ee
Thus there exists a random $N_2(\omega_0,\omega)$ such that
 $\widetilde{\Prob}\mbox{-a.s.}$, $F^{N,l,k}$ holds for
every $N\geq N_2$,  $k=0,\ldots,{\mathcal K}-1$ and
$l=1,\ldots,l_k$.  This % \eqref{event_finite_prop} and
combined with \eqref{local_riemann}  implies  that
$\widetilde{\Prob}\mbox{-a.s.}$, the restriction of
$\alpha^N(\xi^{N,k}_{N\eps'})$ to
$(x_{k,l-1}+v\eps',x_{k,l+1}-v\eps')$  converges as $N\to\infty$ to
the restriction of $R_{r_{k,l-1},r_{k,l}}(.-x_{k,l},\eps')dx$. By
\eqref{time_step} and iv) of Proposition \ref{standard}, this
induces
$$
\lim_{N\to\infty}\alpha^N(\xi^{N,k}_{N\eps'})
=v_k(.,\eps')dx,\quad\widetilde{\Prob}\mbox{-a.s.}$$
which, by Lemma \ref{delta_conv}, implies that the second term on
the r.h.s. of \eqref{decomp_delta} converges
$\widetilde{\Prob}$-a.s. to $0$ as $N\to\infty$. %Putting this
  Together
with \eqref{decrease_delta} and \eqref{decomp_delta_1}, this
 yields
Lemma \ref{wearegoing_1}.
\end{proof}
%(only used 3 $\delta \epsilon$)
%

\begin{proof}{Lemma}{wearegoing_2}
We label $\eta$-particles increasingly from left to right at each
time $Nt_k$, denoting their positions by $(R^{k,i})_{i\in I}$, where
$I$ is a finite set whose cardinal  $|I|$, of order $O(N)$
by  assumption  \eqref{finite_2}, %as $N\to\infty$
is the number of particles in the system. For simplicity we omit the
dependence of the labelling on $N$ in the notation.  The position of
particle $i$ at time $\theta\in[Nt_k,Nt_{k+1}]$ is denoted by
$R^{k,i}_{\theta}$. Let  for any $s,t\in[t_k,t_{k+1}]$,
\[
\Delta_{s,t} := \Delta(\alpha^N(\eta^N_{Ns}),\alpha^N(\eta^N_{Nt}))
= N^{-1}\abs{\sup_{x\in\Z}\sum_{y\leq
x}\left[\eta^N_{Nt}(y)-\eta^N_{Ns}(y)\right]}
\]
Let $z \in \Z$ be a point at which the supremum above is attained.
We can suppose without loss of generality that
\[
N \Delta_{s,t}=\sum_{y\leq z}\eta^N_{Ns}(y) - \sum_{y\leq
z}\eta^N_{Nt}(y).
\]
Therefore to the left of $z$ at time $Ns$ there are at least $N
\Delta_{s,t}$ more particles than at time $Nt$. Let $I_s$ and $I_t$
be the subsets of   $I$ which label the particles to the left of or
at $z$ at times $Ns$ and $Nt$ respectively. Then we have $|I_s| -
|I_t| \geq N \Delta_{s,t}$ which implies $|I_s \backslash I_t |\geq
N \Delta_{s,t}$. Now if $i \in I_s \backslash I_t$, \beq
R_{Nt}^{k,i} > z \mbox{ since } i \notin I_t \label{Rt}\\
R_s^{k,i} \leq z \mbox{ since } i \in I_s \label{Rs} \eeq By
\eqref{Rt}, since we  have  at most $K$ particles per site, $\max_{i
\in I_s \backslash I_t} R_{Nt}^{k,i} \geq z+K^{-1}N \Delta_{s,t}$.
This implies $\max_{i \in I_s \backslash I_t} (R_{Nt}^{k,i}-
R_{Ns}^{k,i}) \geq K^{-1}N \Delta_{s,t}$ by \eqref{Rs},
%since $R_s^{k,i} \leq z$ if $i \in I_s$ which implies
thus
\[
K \sup_{i \in I} (R_{Ns}^{k,i}-  R_{Nt}^{k,i})\geq N\Delta_{s,t}
\]
and we conclude that
$$
\Delta(\alpha^N(\eta^N_{Ns}),\alpha^N(\eta^N_{Nt}))  \leq  K
N^{-1}\sup_{i\in
I}\abs{R^{k,i}_{Ns}-R^{k,i}_{Nt}}\label{delta_labels}
$$
Proceeding as in the proof of Lemma \ref{lemma_2_2_2_1} it is
possible to construct processes $Q^{k,i}$ and $S^{k,i}$ on the time
interval $[Nt_k,Nt_{k+1}]$ such that
\[%be\label{rwbounds}
Q^{k,i}_{Nt}\leq R^{k,i}_{Nt}-R^{k,i}_{Nt_k}\leq S^{k,i}_{Nt}
\]%ee
for $t\in[t_k,t_{k+1}]$,  with: $S^{k,i}$ (resp. $Q^{k,i}$) is a
Markov process on $\Z$ starting from $0$ at time $Nt_k$, that jumps
from $x$ to $x+z$ at rate $p(z)\norm{b}_\infty$ only for $z>0$
(resp. only for $z<0$). Therefore,
\beq\nonumber%\label{probdelta} \ba{ll} & \dsp
&&\Prob\left(
\sup_k\sup_{t\in[t_k,t_{k+1}]}
\Delta\left(\alpha^N(\eta^N_{Nt}),\alpha^N(\eta^N_{Nt_k})\right)\geq
C\eps
\right)\\\nonumber
\leq &\dsp\sum_k\sum_{i\in I}&\Prob\left(
\sup_{t\in[t_k,t_{k+1}]}\abs{R^{k,i}_{Nt}-R^{k,i}_{Nt_k}}\geq CN\eps
\right)\\
%
%\ea \ee
%
%where $C$ is a constant that will be chosen below.
%
%Thus the last probability in \eqref{probdelta} can be bounded above by
%
\label{lastprob1} \leq  &\dsp\sum_k\sum_{i\in I}  & \Prob\left(
-Q^{k,i}_{N(t_{k+1}-t_k)}\geq CN\eps
\right)\\
+ & \dsp\sum_k\sum_{i\in I} & \Prob\left(
S^{k,i}_{N(t_{k+1}-t_k)}\geq CN\eps
\right)\label{lastprob2} \eeq
 Since $p(.)$ has finite first moment, by large deviation bounds
for random walks, the constant $C$ can be chosen large enough such
that  the probabilities in \eqref{lastprob1}--\eqref{lastprob2} are
 smaller than $e^{-C'\eps
N}$ for some constant $C'$ (recall that $t_{k+1}-t_k =\eps'$ is a multiple of $\eps$). By Borel Cantelli's lemma  %and
%\eqref{probdelta}
we conclude that
$$
\limsup_{N\to\infty} \sup_{k=0,\ldots,{\mathcal
K}}\sup_{t\in[t_k,t_{k+1}]}
\Delta\left(\alpha^N(\eta^N_{Nt}),\alpha^N(\eta^N_{Nt_k})\right)\leq
C\eps
$$
for $\eps=\eps_n$ on a set of probability one, which can be chosen
common to all (the countably many) values of $n\in\N$. On this set
we thus have Lemma \ref{wearegoing_2}.
\end{proof}
\subsubsection{General case}\label{General case}
We will relax assumptions  \eqref{finite_0}--\eqref{finite_2} in two steps.\\ \\
 {\em Step one: compact support only.}  We prove Theorem
\ref{th:hydro} when the additional assumptions
\eqref{finite_1}--\eqref{finite_2} are maintained, but
\eqref{finite_0}--\eqref{finite_00} are relaxed.
%
%Now $u_0$ and $\eta_0^N$ are as in Theorem \ref{th:hydro}.
%
Let $T>0$. By approximating the initial profile by
${\mathcal R}$-valued ones, %one can find
we define  a sequence $(u_0^n)_{n\in\N}$ of $[0,K]$-valued functions
satisfying  \eqref{finite_1}--\eqref{finite_2}  and
\eqref{finite_0}--\eqref{finite_00} for fixed $n$, such that
\be \label{delta_approximation}
\lim_{n\to\infty}\Delta(u_0^n,u_0)=0
\ee
and a family of (deterministic) particle configurations
$(\eta^{n,N}_0)_{n\in\N,N\in\N}$ satisfying \eqref{finite_2} for
fixed $n$, such that
\be\label{approx_profiles}
\lim_{N\to\infty}\Delta\left(\alpha^N(\eta^{n,N}_{0}),u_0^n\right)=0
\ee
for each $n\in\N$. %For instance
Indeed,  let us partition $[-R,R]$ into finitely many intervals
$I_{n,k}$ of length at most $\delta_n\to 0$, and set
$$
u_0^n=\sum_k K \mathbf 1_{(x_{n,k},x_{n,k}+\rho_{n,k} l_{n,k}/K)}
$$
where $l_{n,k}$ denotes the length of $I_{n,k}$, $x_{n,k}$ its left
extremity, and $\rho_{n,k}$ the mean value of $u_0$ on $I_{n,k}$.
Then $u_0^n$ has the same mean value as $u_0$ on $I_{n,k}$, hence
$\Delta(u_0^n,u_0)\leq K\delta_n$. Then we define a sequence of
particle configurations associated to $u_0^n$ by
$$
\eta^{n,N}_0(x)=u_0^n\left(\frac{x}{N}\right),\quad\forall x\in\Z
$$
We denote by $u^n(x,t)$ the entropy solution to
\eqref{hydrodynamics} at time $t$ starting from Cauchy datum
$u_0^n$, and by $\eta^{n,N}_t:=\eta_t(\eta^{n,N}_0,\omega)$ the
evolved particle configuration starting from $\eta^{n,N}_0$. By
triangle inequality for $\Delta$,
\beq
\Delta\left(\alpha^N(\eta^{N}_{Nt}),u(.,t)\right) & \leq &
\Delta\left(\alpha^N(\eta^{N}_{Nt}),\alpha^N(\eta^{n,N}_{Nt})\right)\nonumber\\
& + &
\Delta\left(\alpha^N(\eta^{n,N}_{Nt}),u^n(.,t)\right)\nonumber\\
& + & \Delta(u^n(.,t),u(.,t))
\label{triangle_delta} \eeq
%
%where
 We have by iii), b) of Proposition \ref{standard},
\be\label{further_standard}\Delta(u^n(.,t),u(.,t))\leq\Delta(u^n_0,u_0)\ee
 Further, by Subsection \ref{proof_simplified},
\eqref{approx_profiles} implies the analogue of \eqref{wearegoing}
for $\eta^{n,N}_.$, that is
\be\label{restricted_hydro}
\lim_{N\to\infty}\sup_{t\in[0,T]}\Delta\left(\alpha^N(\eta^{n,N}_{Nt}),u^n(.,t)\right)=0\qquad\widetilde{\Prob}\mbox{-a.s.}
\ee
On the other hand, %we can write
\be\label{small_increase}
\Delta\left(\alpha^N(\eta^{N}_{Nt}),\alpha^N(\eta^{n,N}_{Nt})\right)
=\Delta\left(\alpha^N(\eta^{N}_{0}),\alpha^N(\eta^{n,N}_{0})\right)+
\Gamma^{N,n}_{Nt}
\ee
where, by %\eqref{s_delta} and Lemma
Proposition \ref{lemma_bm},  $\Gamma^{N,n}_{Nt}$ is a random
variable which satisfies
$$
\widetilde{\Prob}\left(
\sup_{t\geq 0}\Gamma^{n,N}_{Nt}\geq\gamma
\right)\leq C'Ne^{-cN\gamma},\quad\forall\gamma>0
$$
for some constant $C'>0$ independent of $n$.
Applying Borel-Cantelli's lemma to a vanishing sequence of values of
$\gamma$,
\be\label{bcincrease}\lim_{N\to\infty}\sup_{t\geq
0}\Gamma^{n,N}_{Nt}=0\ee
 $\widetilde{\Prob}\mbox{-a.s.}$. Furthermore,
\begin{eqnarray}
\Delta\left(\alpha^N(\eta^{N}_{0}),\alpha^N(\eta^{n,N}_{0})\right)&\leq&
\Delta\left(\alpha^N(\eta^{N}_{0}),u_0\right)
+\Delta\left(u_0,u^n_0\right)\nonumber\\
&&+\Delta\left(u^n_0,\alpha^N(\eta^{n,N}_{0})\right)\label{furthermore}
\end{eqnarray}
By \eqref{initial_profile_vague},  (i) of Lemma
\ref{delta_conv} and \eqref{approx_profiles}--\eqref{furthermore},

\[%be \label{limsup_fixed}
\limsup_{N\to\infty}\sup_{t\in[0,T]}\Delta\left(\alpha^N(\eta^{N}_{Nt}),u(.,t)\right)\leq
2\Delta(u^n_0,u_0)\]%ee
on a subset of $\widetilde{\Omega}$ with
$\widetilde{\Prob}$-probability one, which  can be chosen to be the
same for all (countably many) values of $n\in\N$ and $T>0$. The
conclusion of Theorem \ref{th:hydro} then follows from
\eqref{delta_approximation} and (ii) of Lemma
\ref{delta_conv}.\\ \\
{\em Step two: general case.}  We now finally relax assumptions
\eqref{finite_1}--\eqref{finite_2}, thanks to the finite propagation
property (both at microscopic and macroscopic levels). Consider
$u_0$ and $\eta^N_0$ as in  the statement of  Theorem
\ref{th:hydro}, without any restriction. Let $w=\max(V,v)$, where
$V$ and $v$ are the constants  given  respectively in Proposition
\ref{standard} and Lemma \ref{finite_prop}.  For $n\in\N$,
 we set
$$
u_0^n:=u_0 \mathbf
1_{[-n,n]},\quad\eta^{n,N}_0(x)=\eta^N_0(x)\mathbf 1_{\Z\cap
[-Nn,Nn]}(x)
$$
By Lemma \ref{finite_prop} and Borel-Cantelli's lemma,
$\widetilde{\Prob}\mbox{-a.s.}$ for large enough $N$,
$$
\eta^N_{Nt}(x)=\eta^{n,N}_{Nt}(x),\,\forall t\leq n/(2w),\,\forall
x\in[-Nn/2,Nn/2]\cap\Z
$$
By the previous step, for each $n\in\N$, $\widetilde{\Prob}$-a.s.,
$\alpha^N(\eta^{n,N}_{Nt})$ converges to $u^{n}(.,t)dx$ as
$N\to\infty$,
 uniformly on bounded times intervals.  By iii), (a) of
Proposition \ref{standard}, for every $t\leq n/(2w)$,
$u^{n}(.,t)=u(.,t)$ on $[-Nn/2,Nn/2]$. Thus,  for every
continuous functions $\psi:\R\to\R$ supported on $[-Nn/2,Nn/2]$,
there is an event of $\widetilde{\Prob}$-probability one on which
$$
\int_\R\psi(x)\alpha^N(\eta^N_{Nt})(dx)\to\int_\R\psi(x)u(x,t)dx
$$
uniformly on the time interval $[0,n/(2w)]$. This event can be
chosen to be the same for all values of $n$ and for a countable set
of continuous functions with compact support that is convergence
determining for the vague topology. This establishes the result.
\begin{appendix}
\section{Proof of Corollary \ref{strong-weak}}\label{appendix_strong-weak}
Let $\mu^N_t$ denote the distribution at time $t$ of a Markov
process with generator \eqref{generator}. Assume
$\alpha^N(\eta)(dx)$ converges in $\mu^N_0$-probability to
$u_0(.)dx$, that is, for all $\eps>0$ and every continuous function
$\psi$ on $\R$ with compact support,
\[%be \label{initial}
\lim_{N\to\infty}
\mu^N_0\left(\left\{\eta:\,\abs{\int_\R\psi(x)\alpha^N(\eta)(dx)-\int
\psi(x)u_0(x)dx}>\eps\right\}\right)=0 \]%ee
Then for every $t>0$, $\alpha^N(\eta)(dx)$ converges in
$\mu^N_{Nt}$-probability to $u(.,t)dx$. This weak law follows from
the {strong}  law in Theorem \ref{th:hydro}. Indeed, by Skorokhod's
representation theorem, we can find a probability space
$(\Omega_0,{\mathcal F}_0,\Prob_0)$ and a sequence $(\eta^N_0)_N$ of
$\bf X$-valued random variables on $\Omega_0$
 such that $\eta^N_0$ has distribution $\mu^N_0$, and
$\alpha^N(\eta^N_0)(dx)$ converges $\Prob_0$-a.s. to $u_0(.)dx$.
\section{Remarks on subadditivity}
\label{remarks_sub}
As outlined below, it would be possible to establish
\eqref{lim_current} in the particular case $\beta=\alpha=0$ by using
the subadditive ergodic theorem as in \cite[Proposition 3]{afs}.
However we cannot use this approach
when $(\beta,\alpha)\neq (0,0)$.\\ \\
%
%The
%arguments in the proof of Proposition 3 from \cite{afs} show that,
%if $\omega'=(\eta,\xi,\omega)$ with $\eta\leq\xi$, $\phi'_v$
%enjoys the subadditivity property (for $0\leq m\leq n$)
%
%\be \label{subadditivity}
%\phi'_v(n/v)(\omega')\leq\phi'_v(m/v)(\omega')+\phi'_v((n-m)/v)
%(\theta'_{m,m/v}\omega')\ee
%
%
Let us introduce
\beq \label{def_X_afs}
X_{0,n}(\omega')&:=&{\phi}^v_{n/v}(\omega')-{\varphi}^v_{n/v}(\eta,\omega)
\\ X_{m,n}(\omega')&:=& X_{0,n-m}(\theta'_{m,m/v}\omega')
\nonumber \eeq
then $X_{m,n}$ is the same as defined in equation \cite[(27)]{afs}
and, by \cite[p. 226]{afs}, it satisfies the {\em superadditivity}
property
\be \label{sub_afs} X_{0,n}\geq X_{0,m}+X_{m,n} \ee
(superadditivity is obtained here rather than subadditivity in
\cite{afs}, because we have $\lambda<\rho$ instead of
$\lambda>\rho$). We point out that the proof of \eqref{sub_afs} in
\cite{afs} uses only attractiveness and the fact that we start with
$\eta\leq\xi$, but not the choice of the distribution of
$(\eta,\xi)$. It can thus be generalized from the asymmetric
exclusion process to our setting.
%
%On the other hand, the current
%$\bar{\varphi}^\eta_v(n/v)(\omega)$ subtracted in
%\eqref{def_X_afs} satisfies (by its very definition) the {\em
%additivity} property
%
%\be \label{additivity}
%\bar{\varphi}^\eta_v(n/v)(\omega)=\bar{\varphi}^\eta_v(m/v)(\omega)
%+\bar{\varphi}^\eta_v((n-m)/v)(\theta_{m,m/v}\omega)
%\ee
%
%The subadditivity property \eqref{subadditivity} then follows from
%\eqref{def_X_afs}, \eqref{sub_afs} and \eqref{additivity}.\\ \\
%
Let us now assume that the probability measure on $\Omega'$ is
$\bar{\nu}^{\lambda,\rho}\otimes\Prob$. We can proceed as in
\cite{afs}. Indeed, because $\bar{\nu}^{\lambda,\rho}$ is invariant
for the coupled process, \eqref{extended_shift} implies that
$\bar{\nu}^{\lambda,\rho}\otimes\Prob$ is invariant by the shift
$\theta'_{x,t}$.
%Since $\bar{\nu}^{\lambda,\rho}$ is supported on $\{\eta\leq\xi\}$
 By \eqref{ordered_coupling}, \eqref{sub_afs} is true
$\bar{\nu}^{\lambda,\rho}\otimes\Prob$-a.s. This and Poisson bounds
on the expectation of $X_{0,n}$ imply, by Kingman's subadditive
ergodic theorem, that $n^{-1}X_{0,n}(\omega')$ converges
$\bar{\nu}^{\lambda,\rho}\otimes\Prob$-a.s. On the other hand,
$n^{-1}{\varphi}^v_{n/v}(\eta,\omega)$ converges
$\bar{\nu}^{\lambda,\rho}\otimes\Prob$-a.s. by \eqref{lemma_2_2_2}
 below. Hence,
\be \label{kingman} \bar{\nu}^{\lambda,\rho}\otimes\Prob\mbox{
a.s.},\,\exists \lim_{n\to\infty}n^{-1}{\phi}^v_{n/v}(\omega')\ee
The limit in \eqref{kingman} can then be identified using the
hydrodynamic limit of \cite{bgrs2}, in the same way as \cite{av} is
used in \cite{afs}. We thus obtain a particular case of
\eqref{lim_current} for $\beta=\alpha=0$.
However, the case $(\beta,\alpha)\neq (0,0)$ would require
\be \label{kingman_shift} \bar{\nu}^{\lambda,\rho}\otimes\Prob\mbox{
a.s.},\,\exists
\lim_{n\to\infty}n^{-1}{\phi}^v_{n/v}(\theta'_{[\beta n],\alpha
n}\omega') \ee
for every $\beta\in\R$ and $\alpha\neq 0$. The a.s. limit
\eqref{kingman} only implies a limit in probability for the shifted
current in \eqref{kingman_shift}, as the distribution of a single
current is unchanged by the shift. In contrast the {\em joint}
distribution of the {\em sequence} of currents may change from
\eqref{kingman} to \eqref{kingman_shift}: Thus we cannot simply
derive \eqref{kingman_shift} from \eqref{kingman}. On the other
hand, the shifted currents
$Y_{0,n}:=X_{0,n}\circ\theta'_{[\beta n],\alpha n}$
no longer enjoy a super-additivity property like \eqref{sub_afs}, so
we cannot use the subadditive ergodic theorem to obtain
\eqref{kingman_shift}. Our approach to obtain \eqref{kingman_shift}
overcomes this difficulty by avoiding the use of subadditivity.
\section{Proof of Proposition \ref{prop_ergo}}\label{appendix_ergo}
The main ingredient is a two-dimensional
extension of Birkhoff's ergodic theorem:
\begin{proposition}\label{prop_ravi}
Let $({\bf\mathcal X},{\mathcal F}, {P})$ be a probability space and
$T,S:{\bf\mathcal X}\to{\bf\mathcal X}$ two measurable mappings such
that  $P\circ T^{-1}=P\circ S^{-1}=P$. Then, for every bounded
$\mathcal F$-measurable $f:{\bf\mathcal X}\to{\bf\mathcal X}$, the
limit
\be\label{ergolim}
f_{**}({x}):=\lim_{n\to\infty} \frac{1}{n} \sum_{j=1}^n \frac{1}{n}
\sum_{i=1}^n f(S^i T^j x)
\ee
exists for almost every ${x}\in\mathcal X$ with respect to $P$.
\end{proposition}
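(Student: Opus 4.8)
The plan is to recognize that the double Cesàro average factorizes as an iteration of the two one-dimensional ergodic averages, and then to handle the fact that both averages share the same index $n$ by a monotone domination argument. Writing $A^S_n g := \frac 1n\sum_{i=1}^n g\circ S^i$ and $A^T_n g := \frac 1n\sum_{j=1}^n g\circ T^j$ for the empirical averages along $S$ and $T$, the quantity inside the limit \eqref{ergolim} is exactly $A^T_n(A^S_n f)(x)$, since $(A^S_n f)(T^j x)=\frac1n\sum_{i=1}^n f(S^iT^j x)$; note that this identity uses only the definitions and requires \emph{no} commutation relation between $S$ and $T$.

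First I would apply the classical Birkhoff ergodic theorem to the measure-preserving map $S$: since $f$ is bounded, hence in $L^1(P)$, the averages $A^S_n f$ converge $P$-almost everywhere (and in $L^1$) to $f_S := \mathbb{E}[f\,|\,\mathcal I_S]$, the conditional expectation with respect to the $\sigma$-field $\mathcal I_S$ of $S$-invariant sets (the harmless discrepancy between $\sum_{i=1}^n$ and the usual $\sum_{i=0}^{n-1}$ contributes $O(1/n)$ because $f$ is bounded). The natural decomposition is then
\[
A^T_n(A^S_n f) = A^T_n f_S + A^T_n g_n, \qquad g_n := A^S_n f - f_S .
\]
Applied to the bounded function $f_S$, Birkhoff's theorem for $T$ gives $A^T_n f_S \to \mathbb{E}[f_S\,|\,\mathcal I_T]$ $P$-a.e., which settles the first term.

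The remaining term $A^T_n g_n$ is the crux and the main obstacle: although $g_n \to 0$ a.e. (and in $L^1$), the \emph{same} index $n$ appears in the outer average and inside $g_n$, so a.e. convergence of $A^T_n g_n$ to $0$ does not follow from Birkhoff directly, and the easy bound $\|A^T_n g_n\|_{L^1}\le\|g_n\|_{L^1}\to 0$ only gives convergence in $L^1$. To upgrade this to an almost-everywhere statement I would dominate: the functions $h_m := \sup_{n\ge m}|g_n|$ are measurable, bounded by $2\sup|f|$, nonincreasing in $m$, and satisfy $h_m \downarrow 0$ a.e. For every $n \ge m$ one has $|g_n|\le h_m$, hence $|A^T_n g_n| \le A^T_n h_m$, and Birkhoff applied to the fixed bounded function $h_m$ yields
\[
\limsup_{n\to\infty}|A^T_n g_n| \le \lim_{n\to\infty} A^T_n h_m = \mathbb{E}[h_m\,|\,\mathcal I_T] \qquad P\text{-a.e.}
\]

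Since $\mathbb{E}[h_m\,|\,\mathcal I_T]$ is nonnegative, nonincreasing in $m$, and has integral $\int h_m\,dP \to 0$ by dominated convergence, its decreasing limit is nonnegative with zero integral, hence vanishes a.e. as $m\to\infty$; intersecting the countably many full-measure sets and letting $m\to\infty$ in the displayed inequality gives $A^T_n g_n \to 0$ $P$-a.e. Combining the two terms, the limit in \eqref{ergolim} exists and equals $\mathbb{E}[f_S\,|\,\mathcal I_T]$ almost everywhere, where $f_S=\mathbb{E}[f\,|\,\mathcal I_S]$. Thus the whole difficulty is the diagonal coupling of the two averages through the common parameter $n$, and it is the monotone domination by $h_m$ together with the one-dimensional Birkhoff theorem for $T$ that converts the straightforward $L^1$ control into the required pointwise convergence.
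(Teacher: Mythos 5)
Your proof is correct, and while it shares the paper's overall skeleton (apply Birkhoff to $S$ to get $f_S$, apply Birkhoff to $T$ to the limit function, and then control the diagonal error term $A^T_n g_n$ with $g_n = A^S_n f - f_S$), the key device for the error term is genuinely different. The paper invokes Egorov's theorem: it chooses sets $A_k$ of small measure off which $A^S_n f \to f_*$ uniformly, dominates the error on $A_k$ by $2M\,\mathbf 1_{A_k}$, and applies the ergodic theorem for $T$ to these indicators, obtaining limits $g_{A_k}$ with $\int g_{A_k}\,dP = P(A_k) \to 0$; since the $A_k$ need not be nested, $g_{A_k}\to 0$ only in $L^1$, and the paper must extract a subsequence in $k$ converging a.e. to conclude. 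You instead dominate by the monotone envelope $h_m=\sup_{n\ge m}\abs{g_n}$, apply Birkhoff for $T$ to each fixed $h_m$, and exploit that $\mathbb{E}[h_m\,|\,\mathcal I_T]$ is nonincreasing in $m$ with integrals $\int h_m\,dP\to 0$, so the decreasing limit vanishes a.e. with no subsequence extraction or Egorov needed. Your route is thus somewhat more self-contained and streamlined, and it has the additional merit of identifying the limit explicitly as the iterated conditional expectation $\mathbb{E}\bigl[\mathbb{E}[f\,|\,\mathcal I_S]\,\big|\,\mathcal I_T\bigr]$, whereas the paper only needs (and only asserts) existence; the paper's Egorov argument, on the other hand, is the more classical Wiener-style presentation and adapts verbatim to settings where one has uniform convergence off small sets but no natural monotone dominating sequence.
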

This follows from more general results established for instance in \cite{wie} or \cite[Chapter 6]{kr}.
However we include a  simpler  proof adapted to our case.\\
\begin{proof}{proposition}{prop_ravi}
By ergodic theorem there exists bounded functions $f_*$ and $f_{**}$
such that
\begin{eqnarray}\label{ergothm_1} \frac{1}{n} \sum_{i=1}^n f( S^i x)  & :=
 & f_*^n({x}) \to f_*({x}), ~ P-a.s. \\
\label{ergothm_2} \frac{1}{n}\sum_{j=1}^n f_*(T^j x) & := &
f^n_{**}({x})\to f_{**}(x), ~ P-a.s. \end{eqnarray}
By \eqref{ergothm_1} and Egorov's theorem, there is a sequence of
subsets $A_k\in{\mathcal F}$, $k\in\N$,  such that $\nu(A_k)\to 0$
as $k\to\infty$, and $f_*^n\to f_*$ uniformly on ${\mathcal
X}\backslash A_k$.
Let
$$
F^n_{**}(x):= \frac{1}{n} \sum_{j=1}^n \frac{1}{n} \sum_{i=1}^n
f({S^i T^jx})=\frac{1}{n}\sum_{j=1}^n f^n_*(T^j x)$$
Then,
\begin{eqnarray*}
\abs{F^n_{**}(x)-f_{**}(x)} & \leq & \frac{1}{n}\abs{
\sum_{j=1}^n\left[
f^n_*(T^j x)-f_*(T^j x)\right]1_{{\mathcal X}\backslash A_k}(T^j x)
}\\
& + & \abs{
\frac{1}{n}\sum_{j=1}^n\left[
f^n_*(T^j x)-f_*(T^j x)\right]1_{A_k}(T^j x)
}\\
& + & \abs{
\frac{1}{n}\sum_{j=1}^n f_*(T^j x)-f_{**}(x)
}\\
& \leq & \sup_{{\mathcal X}\backslash A_k}\abs{f^n_*-f_*}+2M
g^n_{A_k}(x)+\abs{f^n_{**}(x)-f_{**}(x)}=:B_{n,k}(x)
\end{eqnarray*}
where $M:=\sup_{\mathcal X}\abs{f}$ and, by ergodic theorem,
\be\label{conv_g}g^n_{A_k}(x):=\frac{1}{n}\sum_{j=1}^n 1_{A_k}(T^j
x)\stackrel{n\to\infty}{\to} g_{A_k}(x),\quad P\mbox{-a.s.}\ee
for some bounded, nonnegative, $\mathcal F$-measurable $g_{A_k}$
such that
\be\label{integral_g}\int g_{A_k}
d\nu=\nu(A_k)\stackrel{k\to\infty}{\to}0\ee
By uniform convergence of $f^n_*$ on  ${\mathcal X}\backslash A_k$,
 a.s. convergence \eqref{ergothm_2}, and  \eqref{conv_g},
$\limsup_{n\to\infty}B_{n,k}(x)\leq 2M g_{A_k}(x)$ holds $P$-a.s. By
\eqref{integral_g}, $g_{A_k}$ goes to $0$ in $L^1(P)$ as
$k\to\infty$. Thus it has a subsequence converging to $0$ $P$-a.s.
Letting $k\to\infty$ along this subsequence concludes the proof.
\end{proof}
\mbox{}\\
\begin{proof}{proposition}{prop_ergo}
%We now prove Proposition \ref{prop_ergo}.
%
\\ \\
{\em Existence of the limit.}
Define the random variables
$X_{i,j}:=\int_{(j-1)a}^{ja}f(\tau^{i-1}\eta_s)ds$, where $i,j\in\N$
and $(\eta_s)_{s\geq 0}$ is the stationary Markov process with
generator $L$ and initial distribution $\mu$. Take ${\mathcal
X}=\R^{\N\times\N}$, $\mathcal F$ the product Borel $\sigma$-field,
  $P$ the distribution of the ${\mathcal X}$-valued random
variable $(X_{i,j})_{i,j\in\N}$,
$T\left[(x_{i,j})_{i,j\in\N}\right]=(x_{i,j+1})_{i,j\in\N}$,
$S\left[(x_{i,j})_{i,j\in\N}\right]=(x_{i+1,j})_{i,j\in\N}$. We have
$P\circ T^{-1}=P$ because  $(\eta_s)_{s\geq 0}$  is stationary, and
$P\circ S^{-1}=S$  because $\mu$ and $L$ are invariant by $\tau$.
Then the
existence of the limit follows from Proposition \ref{prop_ravi}.\\
\\
{\em Identification of the limit.}
Let now ${\mathcal X}$ be the Skorokhod space of $\bf X$-valued
paths,  and $P=P_\mu$ the law of the Markov process with generator
$L$ and initial distribution $\mu$.  We consider on $\mathcal X$ the
space shifts $(\tau_x)_{x\in\Z}$ and time shifts $(T_t)_{t\geq 0}$
defined as follows: if $\eta_.=(\eta_s)_{s\geq 0}\in{\mathcal X}$,
then $\tau_x\eta_.:=(\tau_x\eta_s)_{s\geq 0}$, where $\tau_x$ on the
r.h.s. is the spatial shift on particle configurations defined in
Section \ref{sec_results}, and $T_t\eta_.:=(\eta_{t+s})_{s\geq 0}$.
 What follows is a generalization of a standard result for
one-parameter Markov processes (see e.g. \cite[Chapter 7]{b}). By
the above existence step, we can define
\be\label{ergolim_2}
f_{**}(\eta_.):=\lim_{n\to\infty}F_{**}^n(\eta_.) \ee
$P_\mu$-a.s., where
\[F_{**}^n(\eta_.)=\frac{1}{an}
\int_0^{an} \frac{1}{n} \sum_{i=1}^n \tau^i f(\eta_t)dt\]
As a limit of measurable functions, $f_{**}$ is measurable. For
every $t>0$, $T_t F_{**}^n-F_{**}^n$ and $\tau F_{**}^n-F_{**}^n$
 consist of space-time sums over boundary domains of order
$O(n)=o(n^2)$, hence in the limit $n\to\infty$, $f_{**}$ is
invariant by  $(T_t)_{t\geq 0}$ and $\tau:=\tau_1$.  To show that
this implies $f_{**}$ is a $P$-a.s. constant function, we will prove
that any measurable subset $F$ of $\mathcal X$ which is invariant by
$(T_t)_{t\geq 0}$  and $\tau$ has $P_\mu$-probability $0$ or $1$.
Taking expectations in \eqref{ergolim_2}, the constant value of
$f_{**}$ must be $\int f \,d\mu$, and Proposition \ref{prop_ergo} is
thus established.\\ \\
%
%We are going to prove that, for every $c\in\R$, the event
%$F=\{f_{**}\leq c\}$ has $P_\mu$-probability $0$ or $1$.
% Since the limit \eqref{ergolim_2} exists
%$P_\mu$-a.s., it exists $P_\eta$-a.s. for $\mu$-a.e. $\eta\in{\bf
%X}$, where $P_\eta$ denotes the law of the Markov process starting
%from deterministic state $\eta\in{\bf X}$.
%
Let $F\subset\mathcal X$ be measurable, and invariant by
$(T_t)_{t\geq 0}$  and $\tau$. Set
\be\label{gergo} g(\eta)=P_\mu(F|\eta_0=\eta)=:P_\eta(F) \ee
which is defined for $\mu$-a.e. $\eta$. Here, $P_\eta$ denotes the
law of the Markov process starting from deterministic state
$\eta\in{\bf X}$. We are going to prove that
\be\label{statement_g}g\equiv 0\mbox{ or }g\equiv 1\ee
$\mu$-a.s., which will imply $P(F)=\int_{\bf
X}g(\eta)\mu(d\eta)\in\{0,1\}$.\\ \\
We have
\[g(\tau\eta)=P_{\tau\eta}(F)=P_{\eta}(\tau^{-1}F)=P_{\eta}(F)=g(\eta)\]
where the second equality follows from translation invariance
\eqref{translation_gen} of $L$ (which implies $P_{\tau\eta}=\tau
P_\eta$), and the third from $\tau$-invariance of $F$. Therefore $g$
is $\mu$-a.s. invariant by the spatial shift $\tau$. We claim that
$g=\mathbf 1_G$ $\mu$-a.s. for some $G\subset{\bf X}$. Indeed, let
 ${\mathcal F}_t$ denote the $\sigma$-field of  ${\mathcal X}$
  generated by the mappings $\eta_.\mapsto\eta_s$ for $s\leq t$.
%Since
%$f_{**}$ is invariant by $T_t$ for every $t\geq 0$, the same holds
%for event $F$, that is $T_t^{-1}F=F$.
With $P_\mu$-probability one,
$$ g(\eta_t) = P_{\eta_t}(F)= P_\mu(T_t^{-1}F|{\mathcal F}_t) =
P_\mu(F|{\mathcal F}_t)$$
where the second equality follows from Markov property, and the
third from the $T_t$ invariance of $F$. By the martingale
convergence theorem, we have the $P_\mu$-a.s. limit
\be \label{convergence_g}
\lim_{t\to\infty} g(\eta_t)=\mathbf 1_F(\eta_.)
\ee
Since $\eta_t\sim\mu$ for all $t\ge 0$, for every $\eps>0$,
\be\label{convergence_gg} P_\mu(\eta_.:\,\eps\le g(\eta_t)\le
1-\eps)= \mu(\eta:\,\eps\le g(\eta)\le 1-\eps)\ee
we conclude from \eqref{convergence_g}--\eqref{convergence_gg} that
the law of $g(\eta_t)$ is Bernoulli, hence $g=\mathbf 1_G$
$\mu$-a.s. for some $G\subset{\bf X}$. The desired conclusion
\eqref{statement_g} is thus equivalent to $\mu(G)\in\{0,1\}$, which
we now establish.\\ \\
First we claim that, with $P_\mu$-probability one, we have
$g(\eta_t)=g(\eta_0)$ for all $t>0$. Indeed, by $T_t$ invariance of
$F$, Markov property and definition of $G$,
\beq\nonumber P_\mu\left(
\{\eta_t\not\in G\}\cap F\right) & = & P\left(\{\eta_t\not\in
G\}\cap T_t^{-1}F\right)\\\nonumber
& = & \int_{\{\eta_t\not\in G\}\subset{\mathcal
X}}P_{\eta_t}(F)P_\mu(d\eta_.)\\\nonumber
& = & \int_{\{\eta_t\not\in G\}\subset{\mathcal
X}}g(\eta_t)P_\mu(d\eta_.)\\\label{nullprob_1}
& = & 0
\eeq
%
%Thus, by definition of $G$,
%\be\label{nullprob_1} P_\mu\left(
%\{\eta_t\not\in G\}\cap F\right)=0\ee
%
Similarly we have
\be\label{nullprob_2} P_\mu\left(
\{\eta_t\in G\}\cap({\mathcal X}\backslash F)\right)=0 \ee
 It follows from \eqref{nullprob_1}--\eqref{nullprob_2} that
$\{\eta_t\in G\}=F$ up to a set of $P_\mu$-probability $0$. In other
words, for every $t>0$, we have $g(\eta_t)=g(\eta_0)=\mathbf
1_F(\eta_.)$ with $P_\mu$-probability one.  \\ \\
Now, assume $\mu(G)\not\in\{0,1\}$, and define the conditioned
measures $\mu_G(d\eta):=\mu(d\eta|\eta\in G)$ and $\mu_{{\mathcal
X}\backslash G}(d\eta):= \mu(d\eta|\eta\in{\mathcal X}\backslash
G)$, so that  $\mu=\mu(G)\mu_G+\mu({\mathcal X}\backslash
G)\mu_{{\mathcal X}\backslash G}$. The invariance of $L$ and $\mu$
with respect to space shift imply the same for $\mu_G$ and
 $\mu_{{\mathcal X}\backslash G}$. A simple computation,
using invariance of $\mu$ for $L$, and the fact that $g(\eta_t)$ is
$P_\mu$ a.s. constant, shows that $\mu_G$ and  $\mu_{{\mathcal
X}\backslash G}$  are also invariant for $L$. This contradicts the
fact that $\mu\in({\mathcal I}\cap{\mathcal S})_e$.
\end{proof}
\end{appendix}
\mbox{}\\ \\
\noindent {\bf Acknowledgments:} We thank Tom Mountford for
 helpful discussions.  We
acknowledge the support of the French Ministry of Education through
the ANR BLAN07-2184264 grant.  K.R. was supported by NSF grant DMS
0104278.  K.R. and E.S. thank TIMC - TIMB, INP Grenoble. K.R. thanks
Universit\'{e} de Rouen for hospitality.  H.G. thanks IXXI for
partial support. Part of this work was done during the semester
``Interacting Particle Systems, Statistical Mechanics and
Probability Theory'' at IHP, where K.R. benefited from a CNRS
``Poste Rouge''.
\end{document}